\begin{document}

\newtheorem*{theo}{Theorem}
\newtheorem*{pro} {Proposition}
\newtheorem*{cor} {Corollary}
\newtheorem*{lem} {Lemma}
\newtheorem{theorem}{Theorem}[section]
\newtheorem{corollary}[theorem]{Corollary}
\newtheorem{lemma}[theorem]{Lemma}
\newtheorem{proposition}[theorem]{Proposition}
\newtheorem{conjecture}[theorem]{Conjecture}

\theoremstyle{definition}
 \newtheorem{definition}[theorem]{Definition} 
  \newtheorem{example}[theorem]{Example}
   \newtheorem{remark}[theorem]{Remark}
   
\newcommand{\Naturali}{{\mathbb{N}}}
\newcommand{\Reali}{{\mathbb{R}}}
\newcommand{\Complessi}{{\mathbb{C}}}
\newcommand{\Toro}{{\mathbb{T}}}
\newcommand{\Relativi}{{\mathbb{Z}}}
\newcommand{\HH}{\mathfrak H}
\newcommand{\KK}{\mathfrak K}
\newcommand{\LL}{\mathfrak L}
\newcommand{\as}{\ast_{\sigma}}
\newcommand{\tn}{\vert\hspace{-.3mm}\vert\hspace{-.3mm}\vert}
\newcommand{\Mb}{{M^{\rm bim}_0A(\Sigma)}}
\newcommand{\Mbp}{{M^{\rm bim}_0A'(\Sigma)}}
\def\A{{\cal A}}
\def\B{{\cal B}}
\def\D{{\cal D}}
\def\E{{\cal E}}
\def\F{{\cal F}}
\def\H{{\cal H}}
\def\I{{\cal I}}
\def\K{{\cal K}}
\def\L{{\cal L}}
\def\N{{\cal N}}
\def\M{{\cal M}}
\def\gM{{\frak M}}
\def\O{{\cal O}}
\def\P{{\cal P}}
\def\S{{\cal S}}
\def\T{{\cal T}}
\def\U{{\cal U}}
\def\V{{\mathcal V}}
\def\qed{\hfill$\square$}

\title{The Fourier-Stieltjes algebra of a C$^*$-dynamical system}

\author{Erik B\'edos,
Roberto Conti}

\date{\today}
\maketitle
\markboth{Erik B\'edos, Roberto Conti}{
}
\renewcommand{\sectionmark}[1]{}

\vspace{-3ex} \hspace{12ex}{\it  Dedicated to the memory of Ola Bratteli and Uffe Haagerup}

\medskip \begin{abstract}
In analogy with the Fourier-Stieltjes algebra of a group, we associate to a unital discrete twisted C$^*$-dynamical system a Banach algebra whose elements are coefficients of equivariant representations of the system. Building upon our previous work, we show that this Fourier-Stieltjes algebra embeds continuously in the Banach algebra of completely bounded multipliers of the (reduced or full) C$^*$-crossed product of the system.  We introduce a notion of positive definiteness and prove a Gelfand-Raikov type theorem allowing us to describe the Fourier-Stieltjes algebra of a system in a more intrinsic way. We also propose a definition of amenability for C$^*$-dynamical systems and show that it implies regularity. After a study of some natural commutative subalgebras, we end with a characterization of the Fourier-Stieltjes algebra involving C$^*$-correspondences over the (reduced or full) C$^*$-crossed product.

\vskip 0.9cm
\noindent {\bf MSC 2010}: 46L55, 43A50, 43A55.

\smallskip
\noindent {\bf Keywords}: 
Fourier-Stieltjes algebra, twisted C$^*$-dynamical system,  twisted C$^*$-crossed product, 
equivariant representation, completely bounded multiplier, positive definiteness, C$^*$-correspondence 
\end{abstract}

\section{Introduction} \label{Intro}

A famous result of Gelfand and Raikov \cite{GR} (see also \cite{G}) says that
a (complex-valued) continuous function on a locally compact group $G$ is positive definite if and only if it arises as 
a ``diagonal" coefficient function of a continuous unitary representation of $G$ on some Hilbert space. 
 The collection $P(G)$ of all such functions forms a cone in the space of continuous, bounded functions $C_{\rm b}(G)$, while its linear span $B(G)$, when equipped with the pointwise product, gives the so-called Fourier-Stieltjes algebra of $G$, as introduced in the seminal work of Eymard \cite{Eym} (where he also introduced the Fourier algebra $A(G)$). It is well-known that $B(G)$ admits a natural Banach space structure for which it is isometrically isomorphic to the 
dual space of the full group C$^*$-algebra $C^*(G)$ associated with $G$. Moreover, any element of $B(G)$ (resp.\ $P(G)$) induces  in a canonical way a completely bounded  (resp.\ completely positive) map  on $C^*(G)$ and on the reduced group C$^*$-algebra  $C^*_{\rm r}(G)$ (see for example \cite{DCHa, Wal1, Pis, Pis2}).
In other words, denoting by $M^{\rm u}_{\rm cb}(G)$ (resp.\ $M_0A(G)$) the space of completely bounded full (resp.\ reduced) multipliers of $G$, this means that $  B(G) \subset M^{\rm u}_{\rm cb}(G)$ and $ B(G) \subset M_0A(G)$.  In fact, we have $  B(G) = M^{\rm u}_{\rm cb}(G)$ \cite{Pis2}, while $B(G) = M_0A(G)$ holds if and only if $G$ is amenable \cite{DCHa, Bo}.

It is also worth mentioning that $A(G)$ and $B(G)$  are important ingredients in Walter's duality theory \cite{Wal}, also available for non abelian groups, that provides an alternative to other perhaps more popular approaches, 
e.g.\  Tannaka-Krein duality in the case of compact groups. Moreover, generalizations of Fourier-Stieltjes algebras (and Fourier algebras) have been introduced in other settings, e.g.\  for Kac algebras in \cite{DCES1, DCES2} and for groupoids in \cite{Re, RW, Oty, Pat}.

In some previous work \cite{BeCo3, BeCo4}, we have developed some aspects of classical Fourier theory
for the reduced C$^*$-crossed product $C_r^*(\Sigma)$ associated with a unital discrete twisted C$^*$-dynamical system $\Sigma=(A,G,\alpha, \sigma)$. We recall in Section 2 the definition of such systems and give a brief outline in subsection 2.1 of the construction of the associated (full and reduced) C$^*$-crossed products using Hilbert C$^*$-modules. As the equivariant representation theory of $\Sigma$ plays a major r{\^o}le in this article, similar to the one played by the unitary representation theory of a group, we give a short introduction to this topic in subsection 2.2, where we also indicate how to form direct sums and tensor products of equivariant representations. 

The main motivation of the present work is to present and discuss a natural candidate for the
Fourier-Stieltjes algebra $B(\Sigma)$ of  $\Sigma$. As a set, $B(\Sigma)$ consists of the $A$-valued coefficients of the equivariant representations of $\Sigma$. In Section 3 we prove that $B(\Sigma)$ may be organized as a unital Banach algebra in  a natural way and that $B(G)$ embeds continuously in $B(\Sigma)$. We also illustrate that  $B(\Sigma)$ is not commutative whenever $A\neq \Complessi$ satisfies some weak assumptions. 

Section 4 contains our main results. In subsection 4.1, we introduce the space $M^{\rm u}_{\rm cb}(\Sigma)$ of completely bounded full multipliers of $\Sigma$,
and show that  $B(\Sigma) \subset M^{\rm u}_{\rm cb}(\Sigma)$, that is, every element of $B(\Sigma)$ 
naturally gives rise to a completely bounded map
on $C^*(\Sigma)$. In the case of $C_r^*(\Sigma)$, the analogous result, saying that $B(\Sigma) \subset M_0A(\Sigma) $,  was already shown in \cite{BeCo4} using a version of Fell's absorption principle. The full case relies on the fact that one may form a kind of tensor product of an equivariant representation of $\Sigma$ with a covariant representation of $\Sigma$ to obtain  another covariant representation of $\Sigma$, as was shown in \cite{BeCo3}.  
In subsection 4.2, we propose a notion of $\Sigma$-positive definiteness for  $A$-valued functions defined on $G\times A$ that are linear in the second variable. This notion fits well with the general scheme, in the sense that we show that $T$ is $\Sigma$-positive definite if and only if $T$ arises as a ``diagonal" coefficient function of an equivariant representation of $\Sigma$ on some Hilbert $A$-module. 
This characterization provides a generalization of Gelfand and Raikov's result for  positive definite functions on $G$ (which is recovered by setting $A=\Complessi$)
and of the GNS-construction for completely positive maps from $A$ to itself \cite{Pas} (which follows by setting  $G=\{e\}$). It also gives that $B(\Sigma)$ coincides with the span of  $\Sigma$-positive definite functions, as in the group case. One should note that our concept of $\Sigma$-positive definiteness differs from the notion of 
positive definiteness (with respect to $\alpha$) defined earlier by Anantharaman-Delaroche \cite{AD1, AD2} for functions from $G$ to $A$. Such a function is positive definite in her sense if and only if it arises
as a ``diagonal" coefficient function of an $\alpha$-compatible action of $G$ on
some Hilbert $A$-module. However, in her approach, the link with completely positive maps on 
the associated crossed products remained somewhat elusive, except in some
special situations, as the one considered by Dong and Ruan \cite{DoRu} for functions from $G$ into the center of $A$. Our results clarify this connection. For example, if $T:G\times A \to A$ is of the form
$T(g,a) = \varphi(g)\, a$ for some function $\varphi:G\to A$, we get that $T$ is $\Sigma$-positive definite
if and only if $\varphi$ takes its values in the center of $A$ and $\varphi$ is positive definite  (with respect to $\alpha$), if and only if $\varphi$ induces a completely positive map on $C^*(\Sigma)$ (resp.\ $C_r^*(\Sigma)$). (See Proposition \ref{L-phi} for a precise statement).
We end this subsection by proposing a definition of amenability for $\Sigma$ and showing that it implies 
regularity, i.e.,
that $C^*(\Sigma)$ is canonically isomorphic to $C_{\rm r}^*(\Sigma)$. 
In subsection 4.3, we discuss some commutative subalgebras of $B(\Sigma)$, notably its center and also its subalgebra arising from coefficients of equivariant representations of $\Sigma$ associated with central vectors. On the other hand, as $B(\Sigma)$ itself sits in the space $M_0A(\Sigma)$ of completely bounded (reduced) multiplers of $\Sigma$, we show in subsection 4.4 that  $M_0A(\Sigma)$ has a natural structure of a Banach algebra with conjugation (in the sense of \cite{Wal1}) and that $B(\Sigma)$ is a subalgebra of $M_0A(\Sigma)$ that is closed under conjugation. We also consider some other subalgebras of $M_0A(\Sigma)$. In this subsection,   $M_0A(\Sigma)$ can be replaced with the space of completely bounded full multipliers $M_{\rm cb}^{\rm u}(\Sigma)$ if wishable.

In Section 5, we show that $B(\Sigma)$ may alternatively be described as ``localized'' coefficients of C$^*$-correspondences over the reduced crossed product $C_r^*(\Sigma)$ if one uses the canonical conditional expectation from $C_r^*(\Sigma)$ onto $A$ as a localization map. A similar result is also true if one considers the full crossed product $C^*(\Sigma)$ instead of $C_r^*(\Sigma)$. This means that $B(\Sigma)$ is fully determined by the reduced (or the full) C$^*$-crossed product associated with $\Sigma$, a fact which is not evident from the outset. 

In a subsequent paper \cite{BeCo6}, we plan to study how different notions of equivalence for C$^*$-dynamical systems are reflected in the associated Fourier-Stieltjes algebras. We also have in mind to study possible candidates for 
 the Fourier algebra of $\Sigma$ (see Remark \ref{fourier} for a tentative definition of $A(\Sigma)$). 
In another direction, one may wonder if it is possible to define a notion of conditionally negative definiteness
(w.r.t.\ $\Sigma$) and establish a Schoenberg type theorem, as in the classical case.
Our work leaves many other interesting questions open for future investigations and some of these problems are mentioned throughout the text. To avoid many technicalities, we only consider systems $\Sigma=(A,G,\alpha, \sigma)$ where $A$ is unital and $G$ is discrete in this paper. When $A$ is nonunital and  the cocycle $\sigma$ is assumed to take its values in the multiplier algebra of $A$, by making use of an approximative unit for $A$ 
whenever appropriate, it should not be problematic to generalize our results to this setting (for $G$ discrete). We also believe that the case where $A$ is a separable C$^*$-algebra and $G$ is a second countable locally compact group (as considered in \cite{PaRa, PaRa1}) should be possible to handle, although this will require a non-negligible amount of work. Our guiding thought in this respect has been that we had better demonstrate that our approach leads to a valuable theory in the case of unital discrete systems before eventually dealing with more general systems.  
 
As general references for the theory of C$^*$-algebras used in this article, we recommend
\cite{Bl} and \cite{BrOz}. Concerning notation and terminology, we list below a few items specific for this paper. If $G$ is a discrete group and $X$ is a (complex) vector space,  we will let $C_c(G,X)$ denote the vector space of functions from $G$ into $X$ with finite support. If $g\in G$ and $x\in X$, we will let $x\odot g$ denote the function in $C_c(G,X)$ which takes the value $x$ at $g$ and is equal to 0 otherwise.
We will only consider {\it unital}  C$^*$-algebras in this paper, and by a homomorphism 
between two such objects we will always mean a homomorphism that is unital and $*$-preserving.  Isomorphisms and automorphisms are therefore 
 also assumed to be $*$-preserving. The group of unitary elements in a C$^*$-algebra $A$ will be denoted by $\U(A)$, the center of $A$ by $Z(A)$, and the group of automorphisms of $A$  by ${\rm Aut}(A)$. The identity map on $A$ will be denoted by ${\rm id}$ (or ${\rm id}_A$). 

By a Hilbert C$^*$-module, we will always mean a  {\it right} Hilbert C$^*$-module. The reader should consult \cite{La1} for unexplained terminology and notation about such modules.  All inner products are assumed to be linear in the second variable, $\L(X, Y)$  will denote the space of all adjointable operators between two Hilbert C$^*$-modules $X$ and $Y$ over a C$^*$-algebra $B$, and $\L(X) = \L(X,X)$. A representation $\pi$ of a C$^*$-algebra $A$ on a Hilbert $B$-module $Y$ is then a homomorphism from $A$ into the C$^*$-algebra $\L(Y)$. 
Moreover, if $\mathcal{H}$ is a Hilbert space, then we will regard $Y\otimes \mathcal{H}$ as a Hilbert $B$-module, and $\pi\otimes \iota$ will denote the representation of $A$ on  $Y\otimes \mathcal{H}$ satisfying $(\pi\otimes \iota)(a) (y \otimes \xi) = \pi(a) \otimes \xi$ for $a\in A,\, y \in Y$ and $\xi \in \mathcal{H}$.

\section{Preliminaries}\label{Preliminaries} 

\subsection{C$^*$-dynamical systems and twisted crossed products}
Throughout this paper, the quadruple $\Sigma = (A, G, \alpha,\sigma)$ will  denote a {\it twisted 
unital discrete
C$^*$-dynamical system}.  This means that
$A$ is a  C$^*$-algebra with unit $1$, 
$G$ is a discrete group with identity $e$
and $(\alpha,\sigma)$ is a {\it twisted
action} of $G$ on $A$ (sometimes called a cocycle $G$-action on $A$), that is,
$\alpha$ is a map from $G$ into ${\rm Aut}(A)$ 
and  $\sigma$ is a map from $G \times G$ into $\, \U(A)$,
satisfying
\begin{align*}
\alpha_g \, \alpha_h & = {\rm Ad}(\sigma(g, h)) \,  \alpha_{gh} \\
\sigma(g,h) \sigma(gh,k) & = \alpha_g(\sigma(h,k)) \sigma(g,hk) \\
\sigma(g,e) & = \sigma(e,g) = 1 \ , 
\end{align*}
for all $g,h,k \in G$. Of course,  ${\rm Ad}(v)$ denotes here the (inner) automorphism  of $A$ implemented by some unitary $v$ in $\U(A)$.

 Some general references about such twisted systems and their associated C$^*$-crossed products are for example \cite{PaRa, PaRa1, BeCo3}.  If $\sigma$ is  trivial, that is, $\sigma(g,h)=1$ for all $g,h \in G$, then $\Sigma$ is an ordinary C$^*$-dynamical system (see e.g.\ \cite{Wi, BrOz, Ec}).
If $\sigma$ is {\it central}, that is, it takes values in  $\U(Z(A))$, then $\alpha$ is still an ordinary action of $G$ on $A$, and  this case is studied in \cite{ZM}. If $A = {\mathbb C}$, then we have $\alpha_g={\rm{id}}$ for all $g \in G$ and $\sigma$ is a 2-cocycle on $G$ with values in the unit circle $\mathbb{T}$,
 (see e.g.\ \cite{BeCo2} and references therein).    

We will use the cocycle equations listed above in various forms a number of times. The reader should be able to fill in the necessary steps that we often will omit. As a sample, we include the following result.

\begin{lemma} \label{cocy}
Let $g, h, h' \in G$. Then we have
$$\sigma(h, h^{-1}h') = \alpha_g^{-1}\big(\sigma(g, h') \, \sigma(gh, h^{-1}h')^*\, \sigma(g,h)^*)\big)\,.$$

\end{lemma}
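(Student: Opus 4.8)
The plan is to derive the identity directly from the two non-trivial cocycle equations, treating $\sigma(h,h^{-1}h')$ as the unknown we want to isolate.

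\medskip

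\noindent\textbf{Approach.} First I would apply the cocycle identity $\sigma(g,h)\,\sigma(gh,k) = \alpha_g(\sigma(h,k))\,\sigma(g,hk)$ with the substitution $k = h^{-1}h'$, so that $hk = h'$. This gives
$$\sigma(g,h)\,\sigma(gh, h^{-1}h') = \alpha_g\big(\sigma(h, h^{-1}h')\big)\,\sigma(g, h')\,.$$
Solving for the term involving the unknown, I multiply on the left by $\sigma(g,h)^*$ and on the right by $\sigma(g,h')^*$, using that all values of $\sigma$ are unitaries in $\U(A)$, to obtain
$$\alpha_g\big(\sigma(h, h^{-1}h')\big) = \sigma(g,h)^*\,\alpha_g\big(\sigma(h, h^{-1}h')\big)\,\sigma(g,h')\,\sigma(g,h')^* \,;$$
more cleanly, rearranging the displayed equation yields
$$\alpha_g\big(\sigma(h, h^{-1}h')\big) = \sigma(g,h)^*\,\sigma(g,h')\,\sigma(gh, h^{-1}h')^*\,\sigma(g,h')\,\sigma(g,h')^*.$$
I need to be careful here about the order of the unitary factors; the correct rearrangement of $\sigma(g,h)\,X = Y\,\sigma(g,h')$ for $X$ the middle term is $Y = \sigma(g,h)\,X\,\sigma(g,h')^*$ with $Y = \alpha_g(\sigma(h,h^{-1}h'))$ — wait, that has $\sigma(g,h)$ and $\sigma(gh,h^{-1}h')$ on the same side. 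So from $\sigma(g,h)\sigma(gh,h^{-1}h') = \alpha_g(\sigma(h,h^{-1}h'))\sigma(g,h')$ I immediately get
$$\alpha_g\big(\sigma(h,h^{-1}h')\big) = \sigma(g,h)\,\sigma(gh,h^{-1}h')\,\sigma(g,h')^*.$$

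\medskip

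\noindent\textbf{Finishing the argument.} Comparing with the claimed formula $\sigma(h,h^{-1}h') = \alpha_g^{-1}\big(\sigma(g,h')\,\sigma(gh,h^{-1}h')^*\,\sigma(g,h)^*\big)$, I apply $\alpha_g^{-1}$ to both sides of what I derived and note that $\alpha_g^{-1}$ is a $*$-homomorphism, so it reverses products and commutes with taking adjoints:
$$\sigma(h,h^{-1}h') = \alpha_g^{-1}\big(\sigma(g,h)\,\sigma(gh,h^{-1}h')\,\sigma(g,h')^*\big).$$
This is $\alpha_g^{-1}\big(\big(\sigma(g,h')\,\sigma(gh,h^{-1}h')^*\,\sigma(g,h)^*\big)^*\big)$, so it differs from the stated identity by an overall adjoint. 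I would reconcile this by instead applying the cocycle identity in its adjoint form, or equivalently by starting from $\sigma(g,h')^*\,\sigma(g,h)\,\sigma(gh,h^{-1}h')$ and recognizing that the stated right-hand side is exactly $\alpha_g^{-1}$ of $\big(\sigma(gh,h^{-1}h')^*\big)$ conjugated appropriately; in practice one simply takes adjoints of the whole derived equation before applying $\alpha_g^{-1}$, using that the left side $\sigma(h,h^{-1}h')$ is recovered from its adjoint via the same manipulation, or notes that the second cocycle equation can be rewritten with the roles arranged to produce the adjoint ordering directly.

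\medskip

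\noindent\textbf{Main obstacle.} There is no serious obstacle: this is a routine bookkeeping exercise with the cocycle relations. The only point requiring care is the non-commutativity of $A$ — one must respect the order of the unitary factors throughout and remember that $\alpha_g^{-1}$, being a $*$-homomorphism, sends $xyz$ to $\alpha_g^{-1}(x)\alpha_g^{-1}(y)\alpha_g^{-1}(z)$ and $x^*$ to $\alpha_g^{-1}(x)^*$. Getting the adjoints and the left/right placement of the $\sigma$-factors exactly right so that the final expression matches the stated one verbatim is the whole content of the verification.
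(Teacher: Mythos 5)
Your computation is exactly the paper's: substitute $k=h^{-1}h'$ into $\sigma(g,h)\,\sigma(gh,k)=\alpha_g(\sigma(h,k))\,\sigma(g,hk)$, multiply on the right by $\sigma(g,h')^*$ to get $\alpha_g\big(\sigma(h,h^{-1}h')\big)=\sigma(g,h)\,\sigma(gh,h^{-1}h')\,\sigma(g,h')^*$, and apply $\alpha_g^{-1}$. You have also correctly spotted the real issue: what this yields is
$$\sigma(h,h^{-1}h')=\alpha_g^{-1}\big(\sigma(g,h)\,\sigma(gh,h^{-1}h')\,\sigma(g,h')^*\big),$$
whose right-hand side is $\alpha_g^{-1}$ of the \emph{adjoint} of the expression in the stated lemma. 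Where your write-up goes wrong is the ``finishing'' paragraph: none of the proposed reconciliations works. Taking adjoints of your derived equation produces $\sigma(h,h^{-1}h')^*=\alpha_g^{-1}\big(\sigma(g,h')\,\sigma(gh,h^{-1}h')^*\,\sigma(g,h)^*\big)$, with a $*$ on the left that cannot be removed by ``the same manipulation''; and rewriting the cocycle identity ``in adjoint form'' just reproduces the same equation. Since $\sigma(h,h^{-1}h')$ is a unitary with no reason to satisfy any self-adjointness, the two candidate right-hand sides genuinely differ in general.

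The resolution is that the lemma as printed contains a misplaced adjoint: the correct statement is the starred version $\sigma(h,h^{-1}h')^*=\alpha_g^{-1}\big(\sigma(g,h')\,\sigma(gh,h^{-1}h')^*\,\sigma(g,h)^*\big)$, equivalently the unstarred identity you derived. Indeed, it is the starred form that is invoked later, in the verification of equation (\ref{v0-2}) in the proof of Theorem \ref{GR}, where the authors replace $\alpha_g^{-1}\big(\sigma(g,h')\,\sigma(gh,h^{-1}h')^*\,\sigma(g,h)^*\big)$ by $\sigma(h,h^{-1}h')^*$. The paper's own one-line proof has the same jump you stumbled on (it derives the unstarred identity and asserts that ``the assertion follows by applying $\alpha_g^{-1}$''). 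So: your derivation is correct and complete; you should simply state plainly that the displayed formula needs a $*$ on its left-hand side, rather than trying to argue the two versions coincide.
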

\begin{proof}
We have
$$ \sigma(g,h) \, \sigma(gh, h^{-1}h') = \alpha_g \big(\sigma(h, h^{-1}h')\big)\, \sigma(g, h\,h^{-1}h')= \alpha_g \big(\sigma(h, h^{-1}h')\big)\, \sigma(g,h')\,.$$
Hence, $ \alpha_g \big(\sigma(h, h^{-1}h')\big) =  \sigma(g,h) \, \sigma(gh, h^{-1}h')\, \sigma(g,h')^*$\,, and the assertion follows by applying $\alpha_g^{-1}$.

\end{proof}
We give below a short exposition on the full twisted crossed product C$^*$-algebra
$C^*(\Sigma)$ 
and its  reduced version 
$C^*_{\rm r}(\Sigma)$
 that makes use of  Hilbert C$^*$-modules (see  e.g.\ \cite{BeCo3} for more details on this approach). 

A  {\it covariant homomorphism} of $\Sigma$ is a pair     
$(\pi,u)$, where $\pi$ is a homomorphism of $A$ into a
$C^*$-algebra $D$ and  $u$ is a map of $G$ into ${\mathcal U}(D)$, satisfying
$$u(g)\, u(h) = \pi(\sigma(g,h))\,  u(gh)$$ and
the covariance relation       
\begin{equation} 
\pi(\alpha_g(a)) = u(g) \, \pi(a) \, u(g)^*
\end{equation}
 for all $g,h \in G$ and $a \in A$.
If $D={\mathcal L}(Y)$ for some Hilbert $C^*$-module $Y$, then
$(\pi, u)$ is called a {\it covariant representation} of $\Sigma$ on $Y$.

\smallskip
For example, let $A^\Sigma$ be the Hilbert $A$-module 
$$A^\Sigma= \Big\{\xi: G \to A \ | \ \sum_{g \in G}  \alpha_g^{-1}\big(\xi(g)^*\,\xi(g)\big) \
\mbox{is norm-convergent in  $A$} \Big\} \, $$
where the $A$-valued inner product is given by
$$\langle\xi,\eta \rangle_\alpha = \sum_{g\in G} \alpha_g^{-1}\big(\xi(g)^* \eta(g)\big),$$
and  the  right action of $A$ is given by $(\xi \cdot a )(g) = \xi(g)\,\alpha_g(a)$. The (left) {\it regular covariant representation} $( \ell_\Sigma\, ,  \lambda_\Sigma)$ of  $\Sigma$ on $A^\Sigma$ is then given by
$$\big[\ell_\Sigma(a)\, \xi\big](h)  = a\; \xi(h)\,,$$ 
$$\big[\lambda_\Sigma(g)\, \xi\, \big](h) =
\alpha_g\big(\xi(g^{-1}h)\big)\, \sigma(g, g^{-1}h)\,$$
for $\xi \in A^\Sigma$ and $h\in G$. Other (unitarily equivalent) ways to define regular covariant representations of $\Sigma$ are discussed in \cite{BeCo3}.

The vector space $C_c(G,A)$ becomes a unital $*$-algebra, denoted by $C_c(\Sigma)$, when equipped with the operations
$$
(f_1 \ast f_2)\, (h)
= \sum_{g \in G} f_1(g) \, \alpha_g\big(f_2(g^{-1}h)\big) \, \sigma(g,g^{-1}h) \,,
$$
$$f^*(h) = \sigma(h, h^{-1})^*\, \alpha_h\big(f(h^{-1})\big)^*  \,. $$
Whenever  $(\pi, u)$ is  a covariant homomorphism of $\Sigma$ into $D$, the map
$\pi \times u : C_c(\Sigma) \to D$  defined by
$$
(\pi \times u)(f) = \sum_{g \in G} \pi(f(g))\, u(g) \quad \text{for all } f \in C_c(\Sigma) \,
$$
gives a homomorphism from $C_c(\Sigma)$ into $D$. 
In particular,  $\Lambda_\Sigma := \ell_\Sigma \times \lambda_\Sigma$ is  a representation of $C_c(\Sigma)$ on $A^\Sigma$, which is easily seen to be faithful.  Identifying $A$ with $\ell_\Sigma(A)$, as we will do in the sequel, we have
$$\Lambda_\Sigma(f) = \sum_{g\in G} \, f(g) \, \lambda_\Sigma(g)\, \quad \text{for}\,\,f \in C_c(\Sigma)\,.$$
The {\it full $C^*$-algebra} $C^*(\Sigma)$  is the C$^*$-algebra obtained by completing $C_c(\Sigma)$ with respect to the norm on $C_c(\Sigma)$ given by 
$$\|f\|_{\rm u} = \sup\big\{ \|(\pi\times u)(f)\| : (\pi, u)\, \text{ is a covariant homomorphism of }\Sigma\big\}\,.$$
We will identify $C_c(\Sigma)$ with its canonical copy inside $C^*(\Sigma)$.
 
 Whenever $(\pi, u)$ is a covariant homomorphism of $\Sigma$ into $D$, we may extend $\pi \times u$
in a unique way to a homomorphism from $C^*(\Sigma)$ into $D$, that will also be denoted by $\pi \times u$.
Let $i_A:A\to C^*(\Sigma)$ be the homomorphism given by $i_A(a) = a\odot e$ for $a \in A$ and let $i_G:G\to \U(C^*(\Sigma))$ be given by $i_G(g) = 1 \odot g$ for $g\in G$. Then $(i_A, i_G)$ is  a covariant homomorphism of $\Sigma$ into $C^*(\Sigma)$,  satisfying  $i_A\times i_G = {\rm id}_{C^*(\Sigma)}$. Moreover, if
  $\phi: C^*(\Sigma)\to D$ is a homomorphism, then $(\pi, u) := (\phi\circ i_A\,,\, \phi\circ i_G)$ is a covariant homomorphism of $\Sigma$ into $D$,  satisfying $\pi\times u = \phi$.

The {\it reduced $C^*$-algebra $C_{\rm r}^*(\Sigma)$}  is  defined as the $C^*$-subalgebra of ${\mathcal L}(A^\Sigma)$ given by
 $$C_{\rm r}^*(\Sigma) = \Lambda_\Sigma\big(C^*(\Sigma)\big) \,.$$
 The system  $\Sigma$ is called {\it regular} when $\Lambda_\Sigma$ is faithful on $C^*(\Sigma)$, in which case it provides an isomorphism from $C^*(\Sigma)$ onto $C_{\rm r}^*(\Sigma)$. 
 As is well-known, this happens when $G$ is amenable. For some more general conditions, see e.g.\ \cite{BeCo3} and references therein. 

\smallskip 
Let $\xi_0 \in A^\Sigma$ be defined by 
$\xi_0(e) = 1$ and $\xi_0(g) = 0$ for $g \neq e$. 
 For $x \in C_{\rm r}^*(\Sigma)$, we set $$\widehat{x} = x\, \xi_0\,  \in A^\Sigma\,.$$ 
The canonical conditional expectation $E$ from  $C_{\rm r}^*(\Sigma)$ onto $A$         
is  then given by $$E(x) = \widehat{x}(e) \, $$
 for  $x \in  C_{\rm r}^*(\Sigma)$.  In particular, we have $E\big(\Lambda_\Sigma(f)\big)= f(e)$
 for $f\in C_c(\Sigma)$. As is well-known, $E$ is faithful. It is also equivariant, that is, we have
$$E\big(\lambda_\Sigma(g) x \lambda_\Sigma(g)^*\big) = \alpha_g(E(x))\,$$
for $g\in G$ and $x \in  C_{\rm r}^*(\Sigma)$.

\subsection{Equivariant representations}
We recall from \cite{BeCo3, BeCo4} that  an {\it equivariant representation} 
of $\Sigma$ on a Hilbert $A$-module $X$ 
is  a pair $(\rho, v)$ where 
 $\rho : A \to \L(X)$ is a representation of $A$ on $X$ and  $v$ is a map from $G$ into the group $\mathcal{I}(X)$ consisting of all $\mathbb{C}$-linear, invertible, bounded maps from $X$ into itself, which  satisfy:
\begin{itemize}
\item[(i)]  \quad $\rho(\alpha_g(a))  = v(g) \, \rho(a) \, v(g)^{-1}\, , \quad  \quad g\in G\,, \,  a \in A$
\item[(ii)]  \quad$v(g)\, v(h)  = {\rm ad}_\rho(\sigma(g,h)) \, v(gh) \, , \quad   \quad g, h \in G$
\item[(iii)] \quad $\alpha_g\big(\langle x \, ,\, x' \rangle\big)   = \langle v(g) x\, ,\, v(g) x' \rangle\, , \quad  \quad g\in G\, , \, x,\,  x' \in X\,$ 
\item[(iv)]  \quad$v(g)(x \cdot a)  = (v(g) x)\cdot \alpha_g(a)\, , 
\quad  \quad \, g \in G,\, x\in X,\, a \in A$. 
\end{itemize}
In (ii) above, $ {\rm ad}_\rho(\sigma(g,h)) \in \mathcal{I}(X) $ is defined by
$${\rm ad}_\rho(\sigma(g,h)) \,x = \big(\rho(\sigma(g,h))\, x \big)\cdot \sigma(g,h)^* \, , \quad g, h \in G,\, x \in X. $$

The {\it central part of} $X$ (w.r.t.\ $\rho$) is defined by 
$$Z_X = \{z \in X \mid \rho(a)z = z\cdot a \, \, \text{for all} \, \, a \in A\}\, .$$
So ${\rm ad}_\rho(\sigma(g,h))$ is the identity operator when restricted to $Z_X$. 
We also note that property (iii) implies that each $v(g)$ is an isometry on $X$, and that $v$ is simply a unitary representation of $G$ on the Hilbert space $X$ when $A=\Complessi$.
Equivariant representations of $\Sigma$ may alternatively be described via $(\alpha,\sigma)$-$(\alpha, \sigma)$ compatible actions of $G$ on C$^*$-correspondences over $A$, in the spirit of  \cite{EKQR-0, EKQR} (see also Remark \ref{eqrep-corresp}).

The  {\it trivial equivariant representation} of $\Sigma$ is  the pair $(\ell, \alpha)$ acting on $A$, considered as an $A$-module over itself in the canonical way. 
If $(\rho,v)$ is an equivariant representation of $\Sigma$ on a Hilbert $A$-module $X$ and $w$ is a unitary representation of $G$ on some         
Hilbert space $\mathcal H$, then we can form the equivariant representation $(\rho \otimes \iota, v \otimes w)$ of $\Sigma$ on $X \otimes {\mathcal H}$, where $(v \otimes w)(g) \in \mathcal{I}(X \otimes {\mathcal H})$ is 
 determined by
$$[(v \otimes w)(g)](x\otimes \xi) = v(g)x \otimes w(g)\xi$$
for all $g\in G$, $x\in X$ and $\xi \in \mathcal{H}$.
For example, if $\lambda$ denotes the (left) regular representation of $G$ on $\ell^2(G)$, then $(\ell \otimes \iota, \alpha \otimes \lambda)$ gives the (left) {\it regular equivariant representation} of $\Sigma$ on $A^G:= A \otimes \ell^2(G)$. 

Tensoring an equivariant representation with a covariant representation is possible, see \cite[Section 4]{BeCo3}: 
if $(\rho,v)$ is an equivariant representation of $\Sigma$ on a Hilbert $A$-module $X$ and
$(\pi,u)$ is a covariant representation of $\Sigma$ on a Hilbert $B$-module $Y$,
then  $(\rho\dot\otimes\pi\,,\, v\dot\otimes u)$ is a covariant representation of
$\Sigma$ on the internal tensor product  Hilbert $B$-module $X\, {\otimes_\pi}Y$. We recall that 
on simple tensors in $X\, {\otimes_\pi}Y$, we have
$$(x \cdot a) \,\dot\otimes\, y = x \,\dot\otimes\, \pi(a) y\,,\quad (x \,\dot\otimes\, y)\cdot b = x \,\dot\otimes \,(y \cdot b)\,, $$
$$ \big\langle x \,\dot\otimes\, y\,,\, x' \,\dot\otimes\, y' \big\rangle 
= \big\langle y\,,\, \pi \big(\langle x,x'\rangle\big) y' \big\rangle\, 
 $$
 for $x \in X, y \in Y, a\in A$ and $b\in B$. We also note for further use that
 $\|x\, \dot\otimes\, y\| \leq \|x\|\, \|y\|$.
 The covariant representation $(\rho\dot\otimes\pi\,,\, v\dot\otimes u)$ is determined by
$$ \big[(\rho\dot\otimes\pi)(a)\big]\, (x\dot\otimes\, y) 
= \rho(a) x \, \dot\otimes \, y \ , \quad
\big[(v\dot\otimes u)(g)\big] \, (x\dot\otimes\, y) = v(g) x \, 
\dot\otimes \, u(g)y\, .$$

One can also form the tensor product of equivariant representations. Indeed,  
assume that $(\rho_1,v_1)$ and $(\rho_2,v_2)$ are equivariant representations 
 of $\Sigma$ on some Hilbert $A$-modules $X_1$ and $X_2$, respectively.
We will define their tensor product $(\rho_1,v_1) \otimes (\rho_2,v_2)$, 
as an equivariant representation of $\Sigma$ on the  internal tensor product Hilbert $A$-module 
$X_1 \otimes_{\rho_2} X_2$. This is achieved as follows. 

For  $a \in A$, we let $(\rho_1 \otimes \rho_2)(a)\in \L(X_1 \otimes_{\rho_2} X_2) $ be determined on simple tensors by
$$\big[(\rho_1 \otimes \rho_2)(a)\big] (x_1 \,\dot\otimes\, x_2) = \rho_1(a)x_1 \,\dot\otimes \, x_2 \quad \text{for } x_1 \in X_1,\, \text{and } x_2 \in X_2\,.$$
In other words, $(\rho_1 \otimes \rho_2)(a) = (\rho_2)_* (\rho_1(a))$ in the notation of  \cite{La1}. It is easily checked
that the associated map $\rho_1 \otimes \rho_2: A \to \L(X_1 \otimes_{\rho_2} X_2)$ is a representation 
of $A$ on $X_1 \otimes_{\rho_2} X_2$.

Next, for each $g \in G$, it is straightforward to verify that there exists a map
$(v_1 \otimes v_2)(g)$ in $\I(X_1 \otimes_{\rho_2} X_2)$ determined on simple tensors by
$$\big[(v_1 \otimes v_2)(g)\big](x_1 \dot\otimes x_2) = v_1(g)x_1 \,\dot\otimes \,v_2(g)x_2\quad  \text{for } x_1 \in X_1\text{  and } x_2 \in X_2\, .$$

\begin{proposition}\label{tenprodeqrep}
The pair $(\rho_1,v_1)\otimes(\rho_2,v_2):=(\rho_1 \otimes \rho_2, v_1 \otimes v_2)$ is an equivariant representation of $\Sigma$ on the Hilbert $A$-module $X_1 \otimes_{\rho_2} X_2$.
\end{proposition}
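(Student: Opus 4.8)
The plan is to verify directly that the pair $(\rho_1\otimes\rho_2,\, v_1\otimes v_2)$ satisfies the four axioms (i)--(iv) of an equivariant representation, checking each identity on simple tensors $x_1\dot\otimes x_2$ (where it suffices to check it) and then extending by linearity and norm-continuity. Beyond the axioms for $(\rho_1,v_1)$ and $(\rho_2,v_2)$, the only ingredients needed are the defining relations of the internal tensor product, namely $(x_1\cdot a)\dot\otimes x_2 = x_1\dot\otimes\rho_2(a)x_2$, $(x_1\dot\otimes x_2)\cdot a = x_1\dot\otimes(x_2\cdot a)$ and $\langle x_1\dot\otimes x_2, x_1'\dot\otimes x_2'\rangle = \langle x_2,\rho_2(\langle x_1,x_1'\rangle)x_2'\rangle$, together with the elementary cocycle identity $\sigma(g,h)^*\sigma(g,h)=1$. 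Before turning to (i)--(iv) I would record the routine checks implicit in the statement that $(\rho_1\otimes\rho_2)(a)$ and $(v_1\otimes v_2)(g)$ are well defined: for the latter, axiom (iv) for $(\rho_1,v_1)$ and axiom (i) for $(\rho_2,v_2)$ give $v_1(g)(x_1\cdot a)\dot\otimes v_2(g)x_2 = (v_1(g)x_1\cdot\alpha_g(a))\dot\otimes v_2(g)x_2 = v_1(g)x_1\dot\otimes\rho_2(\alpha_g(a))v_2(g)x_2 = v_1(g)x_1\dot\otimes v_2(g)\rho_2(a)x_2$, so the prescription respects the balancing over $\rho_2$. Boundedness (indeed, that each $(v_1\otimes v_2)(g)$ is isometric, since $\alpha_g$ is) will come out of axiom (iii) below, and invertibility from the twisted multiplicativity (ii) once that is established.

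Axioms (i), (iv) and (iii) are then mechanical. For (i), apply both sides to $x_1\dot\otimes x_2$ and use $v_1(g)\rho_1(a) = \rho_1(\alpha_g(a))v_1(g)$. For (iv), use $v_2(g)(x_2\cdot a) = (v_2(g)x_2)\cdot\alpha_g(a)$ and slide $\alpha_g(a)$ out by the second balancing relation. For (iii), the inner-product formula reduces the claim to showing $\langle v_2(g)x_2,\rho_2(\langle v_1(g)x_1,v_1(g)x_1'\rangle)v_2(g)x_2'\rangle = \alpha_g\big(\langle x_2,\rho_2(\langle x_1,x_1'\rangle)x_2'\rangle\big)$: axiom (iii) for $(\rho_1,v_1)$ turns $\langle v_1(g)x_1,v_1(g)x_1'\rangle$ into $\alpha_g(\langle x_1,x_1'\rangle)$, axiom (i) for $(\rho_2,v_2)$ rewrites $\rho_2(\alpha_g(\langle x_1,x_1'\rangle))$ as $v_2(g)\rho_2(\langle x_1,x_1'\rangle)v_2(g)^{-1}$, and a final use of axiom (iii) for $(\rho_2,v_2)$ closes the computation. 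From the resulting identity $\langle(v_1\otimes v_2)(g)\zeta,(v_1\otimes v_2)(g)\zeta\rangle = \alpha_g(\langle\zeta,\zeta\rangle)$ one reads off that $(v_1\otimes v_2)(g)$ is isometric, hence bounded, and in particular extends to all of $X_1\otimes_{\rho_2}X_2$.

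The genuinely delicate point is axiom (ii), the twisted multiplicativity $v(g)v(h) = {\rm ad}_{\rho_1\otimes\rho_2}(\sigma(g,h))\,v(gh)$. I would expand the left-hand side on $x_1\dot\otimes x_2$ as $v_1(g)v_1(h)x_1\dot\otimes v_2(g)v_2(h)x_2$ and substitute $v_i(g)v_i(h) = {\rm ad}_{\rho_i}(\sigma(g,h))v_i(gh)$ for $i=1,2$, so that each slot picks up a factor $\sigma(g,h)^*$ acting on the right: the left-hand side becomes $\big(\rho_1(\sigma(g,h))v_1(gh)x_1\cdot\sigma(g,h)^*\big)\dot\otimes\big(\rho_2(\sigma(g,h))v_2(gh)x_2\cdot\sigma(g,h)^*\big)$. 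Moving the first $\sigma(g,h)^*$ across the tensor symbol via the balancing relation produces a $\rho_2(\sigma(g,h)^*)$ that cancels $\rho_2(\sigma(g,h))$ by $\sigma(g,h)^*\sigma(g,h)=1$, leaving $\rho_1(\sigma(g,h))v_1(gh)x_1\dot\otimes(v_2(gh)x_2\cdot\sigma(g,h)^*)$. On the other side, unwinding ${\rm ad}_{\rho_1\otimes\rho_2}(\sigma(g,h))$ applied to $v_1(gh)x_1\dot\otimes v_2(gh)x_2$ yields $\big(\rho_1(\sigma(g,h))v_1(gh)x_1\dot\otimes v_2(gh)x_2\big)\cdot\sigma(g,h)^* = \rho_1(\sigma(g,h))v_1(gh)x_1\dot\otimes(v_2(gh)x_2\cdot\sigma(g,h)^*)$, which matches. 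Thus the heart of the argument is simply careful bookkeeping of on which side of $\dot\otimes$ the twist $\sigma(g,h)^*$ sits, and when it may be absorbed into $\rho_2$; once (ii) holds one also gets invertibility of each $(v_1\otimes v_2)(g)$ (e.g.\ from $(v_1\otimes v_2)(g)(v_1\otimes v_2)(g^{-1}) = {\rm ad}_{\rho_1\otimes\rho_2}(\sigma(g,g^{-1}))$, using $(v_1\otimes v_2)(e)={\rm id}$), so that $v_1\otimes v_2$ indeed takes values in $\I(X_1\otimes_{\rho_2}X_2)$. Extending all four identities from simple tensors by linearity and continuity finishes the proof.
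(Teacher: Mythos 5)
Your proposal is correct and follows essentially the same route as the paper: verification of (i)--(iv) on simple tensors, with the same handling of the twist in (ii) by sliding one factor $\sigma(g,h)^*$ across $\dot\otimes$ so that $\rho_2(\sigma(g,h)^*)$ cancels $\rho_2(\sigma(g,h))$. The only difference is that you spell out the balancing check for well-definedness of $(v_1\otimes v_2)(g)$ and the isometry/invertibility remarks, which the paper leaves as ``straightforward to verify''; this is a harmless and welcome addition.
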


\begin{proof} 
We have to check that the properties (i)-(iv) hold for $(\rho_1 \otimes \rho_2,v_1 \otimes v_2)$. By linearity and density, it suffices to verify  these  on simple tensors.

(i): 
For all $a \in A$, $g \in G$, $x \in X_1$ and $ y \in X_2$ we have
\begin{align*}
(\rho_1 \otimes \rho_2)(\alpha_g(a))(v_1 \otimes v_2)(g) (x \,\dot\otimes\, y) & = 
\rho_1(\alpha_g(a))v_1(g)x \,\dot\otimes\, v_2(g)y \\
& = v_1(g)\rho_1(a)x \,\dot\otimes\, v_2(g)y\\
& = (v_1 \otimes v_2)(g) (\rho_1 \otimes \rho_2)(a)(x \,\dot\otimes\, y) \ ,
\end{align*}
where we have used property (i) for $(\rho_1,v_1)$;

(ii): 
For all $g,h \in G$, $x \in X_1$ and $y \in X_2$ we have
\begin{align*}
(v_1 \otimes v_2)(g) (v_1 \otimes v_2)(h) (x \,\dot\otimes\, y)
& = v_1(g)v_1(h)x \,\dot\otimes \,v_2(g)v_2(h)y) \\
& =  (\rho_1(\sigma(g,h))v_1(gh)x) \cdot \sigma(g,h)^* 
      \,\dot\otimes\, 
      (\rho_2(\sigma(g,h))v_2(gh)y) \cdot \sigma(g,h)^* \\
& =  \rho_1(\sigma(g,h))v_1(gh)x  
       \,\dot\otimes \,
      v_2(gh)y \cdot \sigma(g,h)^* \\
& = \big((\rho_1 \otimes \rho_2)(\sigma(g,h))(v_1 \otimes v_2)(gh)
        (x \,\dot\otimes\, y)\big) \cdot \sigma(g,h)^* \\
 & = {\rm ad}_{\rho_1 \otimes \rho_2}(\sigma(g,h))(v_1 \otimes v_2)(gh)
        (x \,\dot\otimes\, y)\,;
\end{align*}

(iii):
For all $g \in G$, $x,x' \in X_1$ and $y,y' \in X_2$ we  have
\begin{align*}
\alpha_g\big(\big\langle x \,\dot\otimes\, y\,,\, x' \,\dot\otimes\, y'\big\rangle\big)
& = 
\alpha_g\big( \big\langle y, \rho_2 \big(\langle x,x'\rangle\big) y' 
         \big\rangle\big) \\
& = \big\langle v_2(g)y\,,\, v_2(g)\rho_2 \big(\langle x,x'\rangle\big) y'
         \big\rangle \\
& = \big\langle v_2(g)y\,,\, \rho_2 \big(\alpha_g(\langle x,x'\rangle)\big) 
         v_2(g)y' \big\rangle \\
& = \big\langle v_2(g)y\,,\, \rho_2 \big(\langle v_1(g)x,v_1(g)x'\rangle) 
         v_2(g)y' \big\rangle \\
& = \big\langle v_1(g)x \,\dot\otimes\, v_2(g)y\,,\,v_1(g)x' \,\dot\otimes\, v_2(g)y' 
        \big\rangle \\
& = \big\langle (v_1 \otimes v_2)(g)(x \,\dot\otimes\, y)\,,\,(v_1 \otimes v_2)(g)
        (x' \,\dot\otimes\, y')\big\rangle \,,
\end{align*}
where we have used both (iii) and (i) for $(\rho_2,v_2)$ 
and (iii) for $(\rho_1,v_1)$;

(iv) 
For all $a \in A$, $g \in G$, $x \in X_1$ and $ y \in X_2$ we have
\begin{align*}
(v_1 \otimes v_2)(g) \big((x \,\dot\otimes\, y)\cdot a\big) 
& = (v_1 \otimes v_2)(g) (x \,\dot\otimes\, y \cdot a) \\ 
& =  v_1(g)x \,\dot\otimes\, v_2(g)(y \cdot a) \\ 
& =  v_1(g)x \,\dot\otimes\, (v_2(g)y) \cdot \alpha_g(a) \\ 
& = \big((v_1 \otimes v_2)(g)(x \,\dot\otimes\, y)\big) \cdot \alpha_g(a) \, ,
\end{align*}
where we have used (iv) for $(\rho_2,v_2)$.
\end{proof}

Finally, we will need to form the direct sum of equivariant representations. Assume $\{(\rho_i, v_i)\}_{i \in I}$ is an indexed family of equivariant representations of $\Sigma$, with each $(\rho_i,v_i)$ acting on a Hilbert $A$-module $X_i$. Let then $X=\oplus_{i\in I} X_i$ denote the direct sum of the $X_i$'s, as defined for example in \cite{La1}. In particular, the inner product on $X$ is given by  
$$\langle x \, ,\, x' \rangle =  \sum_{i\in I}  \, \langle x_i \, ,\, x'_i \rangle $$
for $x=(x_i)\, , \, x'=(x'_i)  \in X$, the sum being norm-convergent in $A$.
We can then define $\rho=\oplus_{i\in I}\,\rho_i:A\to \L(X)$ and $v=\oplus_{i\in I}\,v_i:G\to\I(X)$ by 
$$\rho(a) \,x = (\rho_i(a)\, x_i)\, , \quad v(g) \, x = (v_i(g)\, x_i)$$ 
for $a \in A$, $g \in G$ and $x=(x_i) \in X$. It is easy to check that $\rho$ and $v$ are well defined and satisfy all the required properties. For example, for $x=(x_i)\, , \, x'=(x'_i)  \in X$ and $g\in G$, using continuity of $\alpha_g$ and property (iii) for each $(\rho_i, v_i)$, we have
\begin{align*}
\alpha_g\big(\langle x \, ,\, x' \rangle\big) &= \sum_{i\in I} \, \alpha_g\big(\langle x_i \, ,\, x_i \rangle\big) 
= \sum_{i\in I} \, \langle v_i(g) x_i\, ,\, v_i(g) x_i' \rangle\,\\ 
&= \langle v(g) x\, ,\, v(g) x' \rangle\, .
\end{align*}
which shows that property (iii) holds for $(\rho, v)$.

\section{The Fourier-Stieltjes algebra of $\Sigma$}\label{FS-algebra}

 We recall \cite{Eym, Pis} that $B(G)$ is a commutative unital Banach $*$-algebra w.r.t.\ to the norm given by letting $\|\varphi\|$ denote the infimum of the set of values  $\|\xi\|\, \|\eta\|$ obtained from the possible decompositions of $\varphi$ of the form $\varphi(g) = \langle \xi,v(g)\, \eta\rangle $ where $v$ is a unitary representation of $G$ on a Hilbert space $\H$ and $\xi\,,\,\eta \in \H$. 
 
 To define the Fourier-Stieltjes algebra $B(\Sigma)$ in a similar manner, we first introduce some notation. 
Let  $(\rho,v)$ be an equivariant representation of $\Sigma$ on a Hilbert $A$-module $X$ and let $x, y \in X$.    
Then we define $T_{\rho,v,x,y}:G\times A \to A$ by
$$T_{\rho,v,x,y}(g,a) = \big\langle x,\rho(a) \, v(g) \, y \big \rangle  \quad \text{for} \,\,a \in A, \, g \in G \,,$$ 
and think of $T_{\rho, v, x, y}$ as an  $A$-valued coefficient function associated with $(\rho, v)$. 
\begin{definition}
We let $B(\Sigma)$  denote the set of all maps from $G\times A$ into $A$ of the form $T_{\rho, v, x, y}$ for some equivariant representation $(\rho,v)$ of $\Sigma$ on a Hilbert $A$-module $X$ and  $x, y \in X$\,. 
\end{definition}
As  we are going to organize $B(\Sigma)$  as an algebra, we call $B(\Sigma)$ for the  {\it Fourier-Stieltjes algebra  of $\Sigma$}. 
Let $L(\Sigma)$ consist of all the maps from $G\times A$ into $A$ that are linear in the second variable, and equip $L(\Sigma)$ with its natural algebra structure:
for $T, T' \in L(\Sigma)$ and $\lambda \in \Complessi$,  we let 
$T+T', \, \lambda T$\,, $ T\times T'$  and $I$ be the maps in $L(\Sigma)$  defined by 
\begin{align*}
(T+T')(g,a) & := T(g,a)+T'(g,a) \\
(\lambda T) (g,a) & := \lambda \, T(g,a) \\
(T \times T')(g,a) & := T(g,T'(g,a)) \\
I(g,a) & := a
\end{align*}
for $g\in G$ and $a \in A$. Given $T\in L(\Sigma)$ and $g \in G$, we will often write $T_g$ for the linear map from $A$ into itself given by $T_g(a)=T(g, a)$ for $a\in A$.
Note that, for instance, we have $(T+T')_g = T_g + T'_g$ and $(T\times T')_g = T_g \circ T'_g$ for all $g\in G$, so it is almost obvious that $L(\Sigma)$ becomes a unital algebra with respect to these operations.

It is clear that $B(\Sigma)$ is a subset of $L(\Sigma)$. In fact, we have:
\begin{lemma} \label{alg}
  $B(\Sigma)$ is a unital subalgebra of $L(\Sigma)$.
\end{lemma}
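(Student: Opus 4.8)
The plan is to show that $B(\Sigma)$ contains the unit $I$ of $L(\Sigma)$ and is closed under the three operations $T+T'$, $\lambda T$, and $T\times T'$, using the constructions of direct sum and tensor product of equivariant representations recalled in Section 2.2 together with an obvious scaling/conjugation trick. Concretely, I would fix equivariant representations $(\rho_1,v_1)$ on $X_1$ and $(\rho_2,v_2)$ on $X_2$, with coefficient functions $T=T_{\rho_1,v_1,x_1,y_1}$ and $T'=T_{\rho_2,v_2,x_2,y_2}$, and exhibit each of $I$, $T+T'$, $\lambda T$, $T\times T'$ as a coefficient function of a suitable equivariant representation.

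First, $I\in B(\Sigma)$: take the trivial equivariant representation $(\ell,\alpha)$ of $\Sigma$ on $A$ and the vectors $x=y=1$; then $T_{\ell,\alpha,1,1}(g,a)=\langle 1,\ell(a)\alpha_g(1)\rangle = \langle 1, a\rangle = a$, so $T_{\ell,\alpha,1,1}=I$. Next, closure under sums: form the direct sum $(\rho,v)=(\rho_1\oplus\rho_2,\,v_1\oplus v_2)$ on $X_1\oplus X_2$ and put $x=(x_1,x_2)$, $y=(y_1,y_2)$; then $T_{\rho,v,x,y}(g,a)=\langle x_1,\rho_1(a)v_1(g)y_1\rangle + \langle x_2,\rho_2(a)v_2(g)y_2\rangle = (T+T')(g,a)$. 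Closure under scalar multiples is immediate since $\lambda T_{\rho_1,v_1,x_1,y_1} = T_{\rho_1,v_1,\bar\lambda x_1,y_1}$ (or, alternatively, $T_{\rho_1,v_1,x_1,\lambda y_1}$), using conjugate-linearity of the inner product in the first slot; here one must just be careful with which slot carries the conjugation, given the convention that inner products are linear in the second variable.

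The substantive point is closure under the product $T\times T'$. Here I would use the tensor product $(\rho_1\otimes\rho_2,\,v_1\otimes v_2)$ on the internal tensor product module $X_1\otimes_{\rho_2}X_2$ from Proposition \ref{tenprodeqrep}, with the vectors $x_1\dot\otimes x_2$ and $y_1\dot\otimes y_2$. Unwinding the definitions, for $g\in G$ and $a\in A$,
\begin{align*}
T_{\rho_1\otimes\rho_2,\,v_1\otimes v_2,\,x_1\dot\otimes x_2,\,y_1\dot\otimes y_2}(g,a)
&= \big\langle x_1\dot\otimes x_2,\ (\rho_1\otimes\rho_2)(a)(v_1\otimes v_2)(g)(y_1\dot\otimes y_2)\big\rangle\\
&= \big\langle x_1\dot\otimes x_2,\ \rho_1(a)v_1(g)y_1\ \dot\otimes\ v_2(g)y_2\big\rangle\\
&= \big\langle x_2,\ \rho_2\big(\langle x_1,\rho_1(a)v_1(g)y_1\rangle\big)\, v_2(g)y_2\big\rangle\\
&= \big\langle x_2,\ \rho_2\big(T(g,a)\big)\, v_2(g)y_2\big\rangle = T'\big(g,T(g,a)\big) = (T'\times T)(g,a).
\end{align*}
So the tensor product realizes the product in the opposite order; reversing the roles of the two representations then gives $T\times T'\in B(\Sigma)$ as well. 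The only mild subtlety is the bookkeeping with the inner-product formula on $X_1\otimes_{\rho_2}X_2$ and making sure the left/right conventions match; since those identities are stated in the excerpt, this is routine. I expect the product case to be the main obstacle, but only in the sense of correctly tracking definitions — there is no genuine difficulty once the tensor product of equivariant representations is in hand.
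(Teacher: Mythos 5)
Your proposal is correct and follows essentially the same route as the paper: unit via the trivial equivariant representation, sums via direct sums, scalars by rescaling a vector, and products via the internal tensor product of equivariant representations, with the same observation that the tensor product realizes the composition in reversed order (the paper writes this directly as $T_{\rho_1,v_1,x_1,y_1}\times T_{\rho_2,v_2,x_2,y_2}=T_{\rho_2\otimes\rho_1,\,v_2\otimes v_1,\,x_2\dot\otimes x_1,\,y_2\dot\otimes y_1}$, which is your computation with the indices relabelled).
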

\begin{proof}
We first  note that 
$I = T_{\ell, \alpha, 1, 1} \in B(\Sigma)$. 
Next, it is evident that  $B(\Sigma)$ is closed under multiplication by scalars. Moreover, if for $i=1,2$,
 $(\rho_i,v_i)$ 
are equivariant representations of $\Sigma$ on Hilbert $A$-modules $X_i$ and  $x_i,\, y_i \in X_i$, then we have
\begin{align*}
T_{\rho_1, v_1, x_1, y_1} \,+ \,T_{\rho_2, v_2, x_2, y_2} &\,= \,T_{\rho_1\oplus\rho_2,\, v_1\oplus v_2, \,x_1\oplus x_2, y_1\oplus y_2}\,,\\
T_{\rho_1, v_1, x_1, y_1}\, \times\, T_{\rho_2, v_2, x_2, y_2} &\,=\, T_{\rho_2\otimes\rho_1,\, v_2\otimes v_1, \,x_2\dot\otimes x_1, y_2\dot\otimes y_1}\,,
 \end{align*}
which clearly implies that $B(\Sigma)$ is closed under addition and multiplication. 
For example, for $g \in G$ and $a \in A$, we have
\begin{align*} \big(T_{\rho_1, v_1, x_1, y_1}\, \times\, T_{\rho_2, v_2, x_2, y_2}\big)(g,a) &=
T_{\rho_1, v_1, x_1, y_1}\big(g, \,  T_{\rho_2, v_2, x_2, y_2}(g,a)\big)\\
&=  \big\langle x_1\,,\, \rho_1\big(\langle x_2\,,\, \rho_2(a)v_2(g)y_2 \rangle\big)v_1(g)y_1 \big\rangle \\
& = \big\langle x_2 \,\dot\otimes\, x_1\,, \,\rho_2(a)v_2(g)y_2 \,\dot\otimes \,v_1(g)y_1\big\rangle\\
& =\big\langle x_2 \,\dot\otimes x_1\,,(\rho_2 \otimes \rho_1)(a)(v_2 \otimes v_1)(g)       
(y_2 \,\dot\otimes\, y_1) \big\rangle \\ 
&= \big(T_{\rho_2\otimes\rho_1,\, v_2\otimes v_1\,, \,x_2\dot\otimes x_1, y_2\dot\otimes y_1}\big)(g,a)\,.
\end{align*}
\end{proof}
As with $B(G)$, it is not difficult to see that we get a norm on $B(\Sigma)$ by letting  $\|T\|$ denote the infimum of  the set of values $\|x\|\, \|y\|$ associated with the possible decompositions of $T$ of the form  $T = T_{\rho, v, x,y}$. 
Moreover, we have:

\begin{proposition} $B(\Sigma)$ is a unital Banach algebra w.r.t.\ $\|\cdot\|$.
\end{proposition}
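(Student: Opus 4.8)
My plan is as follows. By Lemma~\ref{alg}, $B(\Sigma)$ is already a unital subalgebra of $L(\Sigma)$ with unit $I$, and (as the paragraph preceding the statement notes) $\|\cdot\|$ is a norm on it: definiteness because $\|T(g,a)\|\le\|T\|\,\|a\|$ for all $(g,a)$, homogeneity because $T_{\rho,v,x,\lambda y}=\lambda\,T_{\rho,v,x,y}$, and the triangle inequality from the additivity formula of Lemma~\ref{alg} together with the rescaling trick below. So the content left to prove is that $\|\cdot\|$ is submultiplicative, that $\|I\|=1$, and — the real point — that $(B(\Sigma),\|\cdot\|)$ is complete. I will use repeatedly the trivial rescaling $T_{\rho,v,x,y}=T_{\rho,v,tx,t^{-1}y}$ for $t>0$, so that every $T\in B(\Sigma)$ has, for each $\varepsilon>0$, a decomposition $T=T_{\rho,v,x,y}$ with $\|x\|=\|y\|\le(\|T\|+\varepsilon)^{1/2}$, together with the bound $\|T_{\rho,v,x,y}(g,a)\|\le\|x\|\,\|a\|\,\|y\|$, which holds since $\rho$ is contractive and each $v(g)$ is isometric by property (iii).

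For submultiplicativity, from the computation in the proof of Lemma~\ref{alg} one reads off that if $T=T_{\rho_1,v_1,x_1,y_1}$ and $T'=T_{\rho_2,v_2,x_2,y_2}$ then $T\times T'=T_{\rho_2\otimes\rho_1,\,v_2\otimes v_1,\,x_2\dot\otimes x_1,\,y_2\dot\otimes y_1}$ on the internal tensor product $X_2\otimes_{\rho_1}X_1$; combining this with the standard estimate $\|x\dot\otimes x'\|\le\|x\|\,\|x'\|$ for such tensor products (see \cite{La1}) gives $\|T\times T'\|\le(\|x_1\|\,\|y_1\|)(\|x_2\|\,\|y_2\|)$, and taking infima over the two decompositions yields $\|T\times T'\|\le\|T\|\,\|T'\|$. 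Also $\|I\|=1$: the inequality $\le 1$ is clear since $I=T_{\ell,\alpha,1,1}$, and $\ge 1$ because evaluating any decomposition $I=T_{\rho,v,x,y}$ at $(e,1)$ gives $1=\langle x,y\rangle$ — here one uses $\rho(1)={\rm id}$ and $v(e)={\rm id}$, the latter following from property (ii) and the invertibility of $v(e)$ — whence $1\le\|x\|\,\|y\|$.

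Completeness is the main step, and here I would imitate the classical proof that $B(G)$ is complete, using the direct sum of equivariant representations constructed earlier in the paper in place of the direct sum of unitary representations. Let $(T_n)$ be Cauchy in $B(\Sigma)$; it suffices to prove that a subsequence converges, so we may assume $\|T_{n+1}-T_n\|<4^{-n-1}$ for all $n\ge1$. Using the rescaling above, choose decompositions $T_1=T_{\rho_0,v_0,x_0,y_0}$ and $T_{n+1}-T_n=T_{\rho_n,v_n,x_n,y_n}$ $(n\ge1)$ with $\|x_n\|=\|y_n\|\le 2^{-n}$. Since $\sum_{n\ge0}\|x_n\|^2<\infty$, the positive series $\sum_{n\ge0}\langle x_n,x_n\rangle$ converges in norm in $A$, so $x:=(x_n)_{n\ge0}$ is a genuine element of the Hilbert $A$-module $X:=\bigoplus_{n\ge0}X_n$, and likewise $y:=(y_n)_{n\ge0}\in X$; let $(\rho,v):=\bigoplus_{n\ge0}(\rho_n,v_n)$ be the associated direct sum equivariant representation on $X$. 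For each $g\in G$ and $a\in A$ the series $\sum_{n\ge0}\langle x_n,\rho_n(a)v_n(g)y_n\rangle$ then converges absolutely (its $n$-th term has norm $\le\|a\|\,4^{-n}$), its sum equals $T_{\rho,v,x,y}(g,a)$ by the definition of the direct-sum inner product, and its partial sums telescope to $T_N(g,a)$; hence $T:=T_{\rho,v,x,y}\in B(\Sigma)$ and $T_N(g,a)\to T(g,a)$ for all $(g,a)$. Finally, for each $N$, $T-T_N=\sum_{n\ge N}(T_{n+1}-T_n)=T_{\rho^{(N)},v^{(N)},x^{(N)},y^{(N)}}$ with $(\rho^{(N)},v^{(N)})=\bigoplus_{n\ge N}(\rho_n,v_n)$ on $\bigoplus_{n\ge N}X_n$, $x^{(N)}=(x_n)_{n\ge N}$ and $y^{(N)}=(y_n)_{n\ge N}$, so $\|T-T_N\|\le\|x^{(N)}\|\,\|y^{(N)}\|\le\big(\sum_{n\ge N}\|x_n\|^2\big)^{1/2}\big(\sum_{n\ge N}\|y_n\|^2\big)^{1/2}\to0$. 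Thus $T_N\to T$ in $B(\Sigma)$, which gives completeness.

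The only genuine obstacle is this last step, and within it the one point demanding care is the verification that the ``diagonal'' vectors $x=(x_n)$ and $y=(y_n)$, assembled from the tails of the series, really define elements of the Hilbert-module direct sum $\bigoplus_n X_n$; this is precisely where the normalization $\|x_n\|=\|y_n\|$ combined with the square-summability of $(2^{-n})$ is used. (Once an isometric identification of $B(\Sigma)$ with a suitable dual space is established, completeness would of course follow automatically, but no such identification is available to us at this point.)
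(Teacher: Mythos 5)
Your proof is correct and follows essentially the same route as the paper: the norm axioms and submultiplicativity via the rescaling trick and the direct-sum/tensor-product formulas of Lemma \ref{alg}, and completeness by assembling a single equivariant representation as a direct sum with the normalization $\|x_n\|=\|y_n\|$ guaranteeing square-summability of the coefficient vectors. The only (immaterial) difference is that you verify completeness via a rapidly convergent subsequence of a Cauchy sequence, whereas the paper uses the equivalent absolutely-convergent-series criterion.
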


\begin{proof} 
From Lemma \ref{alg} and our comment above, we know that $B(\Sigma)$ is a unital  algebra and a normed space w.r.t.\ $\|\cdot\|$. We now show that $B(\Sigma)$ is complete. Assume that $\{T_i\}_{i\in \Naturali}$ is a sequence of non-zero elements in $B(\Sigma)$ such that $\sum_{i=1}^\infty \, \|T_i\| \, < \, \infty$. We have to show that $\sum_{i=1}^\infty \, T_i $ is norm-convergent in $B(\Sigma)$.  For each $i$, we may pick an equivariant representation $(\rho_i, v_i) $ of $\Sigma$ on $X_i$ and $x_i, y_i\in X_i$ such that $T_i = T_{\rho_i, v_i, x_i, y_i}$ and $$\|x_i\|\, \|y_i\|  \, <\, \|T_i\|+ 1/2^i\,.$$
Without loss of generality, we may assume that $\|x_i\| = \|y_i\| $ (by replacing $x_i$ with $\sqrt{\frac{\|y_i\|}{\|x_i\|}}\, x_i$ and $y_i$ with $\sqrt{\frac{\|x_i\|}{\|y_i\|}} \,y_i$ if necessary), so we have
\begin{equation} \label{ineq-Ti}
\|x_i\|^2 = \|y_i\|^2 \, <\, \|T_i\|+ 1/2^i\
\end{equation} for each $i$. Let $X=\oplus_{i=1}^\infty \, X_i$ and note that 
$x=(x_i)$ and $y=(y_i)$ both belong to $X$ since, for instance, we have
$$\sum_{i=1}^\infty \, \|\langle x_i, x_i\rangle\| = \sum_{i=1}^\infty \|x_i\|^2 \, \leq \,  \sum_{i=1}^\infty \, \big(\|T_i\| + 1/2^i\big)\, =\,  \Big(\sum_{i=1}^\infty \,\|T_i\|\Big) + 1 \,<\, \infty\,,$$
so the series $\sum_{i=1}^\infty \, \langle x_i, x_i\rangle$ is (absolutely) convergent in $A$. 

\smallskip We may now let $(\rho, v)$ be the equivariant representation of $\Sigma$ on $X$ given by  $\rho = \oplus_{i=1}^\infty\,\rho_i $\,,\, $v= \oplus_{i=1}^\infty \, v_i$\,,  and set $T= T_{\rho, v, x, y} \in B(\Sigma)$. Then we claim that $\sum_{i=1}^\infty \, T_i $ converges to $T$ in $B(\Sigma)$. Indeed, for each $n \in \Naturali$, setting $$X'_n = \oplus_{i=n+1}^\infty \, X_i\, , \, \rho'_n =\oplus_{i=n+1}^\infty \,\rho_i\, , \, v'_n = \oplus_{i=n+1}^\infty \, v_i\, , \, x'_n= ( x_i)_{i=n+1}^\infty \,, \, y'_n = ( y_i)_{i=n+1}^\infty\,,$$ one easily checks that   
$T-\sum_{i=1}^n \, T_i \,= T_{\rho'_n, v'_n, x'_n, y'_n}\,.$ Hence, we get
$$\big\|T-\sum_{i=1}^n \, T_i \big\|^2 \, \leq \, \|x'_n\|^2\, \|y'_n\|^2 = \big\|\sum_{i=n+1}^\infty \langle x_i, x_i\rangle\big\|\, \,  \big\|\sum_{i=n+1}^\infty \langle y_i, y_i\rangle\big\|$$
$$\leq \sum_{i=n+1}^\infty \|\langle x_i, x_i\rangle\|\, \cdot \,  \sum_{i=n+1}^\infty \|\langle y_i, y_i\rangle\|
=  \sum_{i=n+1}^\infty \| x_i\|^2\, \cdot \,  \sum_{n+1}^\infty \| y_i\|^2 $$
$$\leq \,  \Big(   \sum_{i=n+1}^\infty \, \big(\|T_i\| + 1/2^i\big) \Big)^2 \,\, \to \, 0 \, \, \text{    as } n \to \infty$$
since  $\sum_{i=1}^\infty \, (\|T_i\| + 1/2^i) $ is convergent.

\smallskip Next we show that $\|\cdot \| $ is an algebra-norm. Let $T_1, \, T_2 \in B(\Sigma)$. If $T_j= T_{\rho_j, v_j, x_j, y_j}$ for $j=1, 2$, then $T_1\times T_2 = T_{\rho_2\otimes\rho_1,v_2\otimes v_1 , x_2\otimes x_1, y_2\otimes x_1}$, so $$\|T_1\times T_2\| \leq \|x_2\otimes x_1\|\, \|y_2\otimes y_1\|\, \leq \|x_1\| \|y_1\|\, \| x_2\|\|y_2\|$$
Hence, taking the infimum, first over all possible decompositions of $T_1$\,, next over all possible decompositions of $T_2$, we get $\|T_1\times T_2\|\,\leq \, \|T_1\|\,\| T_2\|$\,, as desired. 

\smallskip As $I= T_{\ell, \alpha, 1, 1}$\,, we have $\|I\| \leq \|1\| \,\|1\| \leq 1$, hence $\|I\|=1$ since the converse inequality always holds. 
\end{proof}

There exists a canonical way of embedding $B(G)$ into $B(\Sigma)$.

\begin{proposition} For $\varphi \in B(G)$, define $T^\varphi \in L(\Sigma)$ by $T^\varphi(g, a) = \varphi(g) \, a$ for $g\in G$ and $a \in A$. 
Then $T^\varphi \in B(\Sigma)$ and the map $\varphi \to T^\varphi $ gives an injective, continuous, algebra-homomorphism of $B(G)$ into $B(\Sigma)$.
\end{proposition}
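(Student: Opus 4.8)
The plan is to realize $T^\varphi$ as an $A$-valued coefficient function of an equivariant representation of $\Sigma$ built from a unitary representation of $G$ implementing $\varphi$. Since $\varphi\in B(G)$, one may write $\varphi(g)=\langle\xi,w(g)\eta\rangle$ for some unitary representation $w$ of $G$ on a Hilbert space $\mathcal{H}$ and some $\xi,\eta\in\mathcal{H}$. Applying to the trivial equivariant representation $(\ell,\alpha)$ of $\Sigma$ on $A$ and to $w$ the construction recalled earlier (which produces an equivariant representation $(\rho\otimes\iota,\,v\otimes w)$ of $\Sigma$ on $X\otimes\mathcal{H}$ from an equivariant representation $(\rho,v)$ on $X$ and a unitary representation $w$ of $G$), I obtain the equivariant representation $(\ell\otimes\iota,\,\alpha\otimes w)$ of $\Sigma$ on the Hilbert $A$-module $A\otimes\mathcal{H}$.

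Next I would compute the coefficient function of this representation at the vectors $x=1\otimes\xi$ and $y=1\otimes\eta$. Using $(\alpha\otimes w)(g)(1\otimes\eta)=1\otimes w(g)\eta$, then $(\ell\otimes\iota)(a)(1\otimes w(g)\eta)=a\otimes w(g)\eta$, and finally the $A$-valued inner product on $A\otimes\mathcal{H}$ (which on simple tensors reads $\langle a_1\otimes\zeta_1,\,a_2\otimes\zeta_2\rangle=\langle\zeta_1,\zeta_2\rangle_{\mathcal{H}}\,a_1^{*}a_2$), one finds
$$T_{\ell\otimes\iota,\,\alpha\otimes w,\,1\otimes\xi,\,1\otimes\eta}(g,a)=\big\langle 1\otimes\xi,\,a\otimes w(g)\eta\big\rangle=\langle\xi,w(g)\eta\rangle\,a=\varphi(g)\,a=T^\varphi(g,a)$$
for all $g\in G$ and $a\in A$, so that $T^\varphi\in B(\Sigma)$. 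For the continuity statement, I would note that $\|1\otimes\zeta\|=\|\zeta\|$ for $\zeta\in\mathcal{H}$ (since $\|1\otimes\zeta\|^{2}=\|\langle\zeta,\zeta\rangle\,1\|=\|\zeta\|^{2}$), so the displayed decomposition gives $\|T^\varphi\|\le\|1\otimes\xi\|\,\|1\otimes\eta\|=\|\xi\|\,\|\eta\|$; taking the infimum over all such decompositions of $\varphi$ then yields $\|T^\varphi\|\le\|\varphi\|$, so $\varphi\mapsto T^\varphi$ is contractive, hence continuous.

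It remains to check the algebraic assertions. Linearity of $\varphi\mapsto T^\varphi$ is immediate from the formula $T^\varphi(g,a)=\varphi(g)\,a$. For multiplicativity, a one-line computation gives, for $\varphi_1,\varphi_2\in B(G)$,
$$\big(T^{\varphi_1}\times T^{\varphi_2}\big)(g,a)=T^{\varphi_1}\big(g,\varphi_2(g)\,a\big)=\varphi_1(g)\varphi_2(g)\,a=(\varphi_1\varphi_2)(g)\,a=T^{\varphi_1\varphi_2}(g,a),$$
where $\varphi_1\varphi_2$ denotes the pointwise product in $B(G)$; one also sees at once that $T^{1}=I$, so the homomorphism is unital. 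Finally, injectivity follows by evaluating at $a=1$: if $T^\varphi=0$, then $\varphi(g)\,1=0$ in $A$ for every $g\in G$, and since $A$ is unital this forces $\varphi\equiv0$.

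I do not expect a serious obstacle here: the content lies entirely in choosing the right equivariant representation and carrying out the coefficient computation. The only point demanding a little care is the bookkeeping of conventions for $A\otimes\mathcal{H}$, namely the order of the two factors in the $A$-valued inner product and the identification of the module norm of $1\otimes\zeta$ with the Hilbert-space norm of $\zeta$, though all of this is routine given the setup established earlier.
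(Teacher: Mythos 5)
Your proposal is correct and follows essentially the same route as the paper: both realize $T^\varphi$ as the coefficient $T_{\ell\otimes\iota,\,\alpha\otimes w,\,1\otimes\xi,\,1\otimes\eta}$ of the equivariant representation $(\ell\otimes\iota,\alpha\otimes w)$ on $A\otimes\mathcal{H}$ and then take infima over decompositions of $\varphi$ to get $\|T^\varphi\|\le\|\varphi\|$. Your added verifications of multiplicativity and injectivity (which the paper dismisses as obvious) are accurate and harmless.
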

\begin{proof} Let us pick  a unitary representation $w$ of $G$ on a Hilbert space $\mathcal H$ and $\xi,\eta \in {\mathcal H}$ such that $\varphi(g) = \langle \xi\, ,\, w(g)\, \eta\rangle$ for all $g\in G$. 
Considering $(\rho, v) = (\ell \otimes \iota, \alpha \otimes w)$ on $X=A \otimes {\mathcal H}$ and
 $x=1\otimes \xi\, , \, y = 1 \otimes \eta$, we get
$$
T_{\rho, v , x, y} (g,a) = \big\langle 1\, ,\,  a \, \alpha_g(1) \big\rangle \, \langle \xi, w(g)\eta\rangle 
= \langle \xi, w(g)\eta \rangle \, a = \varphi(g)\, a\,.
$$
Hence, $ T^\varphi = T_{\rho, v , x, y}  \in B(\Sigma)$. Moreover, this implies that $\|T^\varphi\| \leq \|x\| \|y\| = \|\xi\|\|\eta\|$. Taking the infimum over all possible decompositions of $\varphi$, we get $\|T^\varphi\| \leq \|\varphi\|$. As the map $\varphi \to T^\varphi$ is obviously an injective algebra-homomorphism, we are done.
\end{proof}

\begin{remark} \label{centvect} 
For later use, we note that the vectors $x=1\otimes \xi$ and $y=1\otimes \eta$ in the above proof are both central, i.e., they lie in the central part of $X=A\otimes \mathcal{H}$ (with respect to $\rho = \ell \otimes \iota$). Indeed, $\rho(a) (1\otimes \xi) = a\otimes \xi = (1\otimes \xi)\cdot a$ for all $a \in A$.
\end{remark}

\begin{remark}
If $\Sigma = (\Complessi, G, {\rm id}, \sigma)$ where $\sigma \in Z^2(G,\Toro)$ is a normalized 2-cocycle on $G$, then it is easy to see that the map $\varphi \to T^\varphi$ is an isomorphism from $B(G)$ onto $B(\Sigma)$. We will usually identify $B(G)$ with $B(\Sigma)$ in this case.  
\end{remark}

\begin{remark}
We note that if $A$ is noncommutative, then  $B(\Sigma)$ is noncommutative as well. Indeed, 
let us consider the trivial equivariant representation $(\ell,\alpha)$ on $X=A$ and let $b, \, b' \in A$. Then we have 
$T_{\ell, \alpha, b, 1}(g, a) = b^*a\alpha_g(1) = b^*a$, so 
$$(T_{\ell, \alpha, b', 1} \times T_{\ell, \alpha, b, 1})(g, a) = T_{\ell, \alpha, b', 1}(g, b^*a)= b'^*b^* a = (bb')^*a\,$$
for all $g \in G $ and $a \in A$. Similary, we get
$$(T_{\ell, \alpha, b, 1} \times T_{\ell, \alpha, b', 1})(g, a) = (b'b)^*a\,$$
for all $g \in G $ and $a \in A$. Hence we see that $T_{\ell, \alpha, b', 1} $ and $T_{\ell, \alpha, b, 1}$ commute in $B(\Sigma)$ if and only if $bb'=b'b$.

On the other hand, one may wonder whether $B(\Sigma)$ can be commutative when $A$ is commutative (and nontrivial). We show below that this is often not the case. 

Let $(\rho, v)$ denote an equivariant representation of $\Sigma$ on a Hilbert $A$-module $X$ and assume there exists $\beta \in {\rm Aut}(A)$ such that $\beta\circ\alpha_g=\alpha_g\circ\beta$ for all $g\in G$ and $\beta(\sigma(g,h)) = \sigma(g,h)$ for all $g,h \in G$. Then it is easy to check that $(\rho\circ \beta, v)$ is also an equivariant representation of $\Sigma$ on $X$. Now, if  $\beta$ is nontrivial, so there exists $b \in A$ such that $\beta(b)\neq b$, we may set $T= T_{\ell, \alpha, b, 1}$\,,\, $T'= T_{\ell\circ\beta, \alpha, 1, 1} $. For all $g\in G$, we get $(T\times T') (g,1) = b^*\,,$  while $\,(T'\times T) (g,1) = \beta(b)^*\,.$ Hence, $T\times T'\neq T'\times T$ in this case. So $B(\Sigma)$ will be noncommutative whenever some $\beta$ satisfying the above assumptions exists, even if $A$ is commutative. 
This happens for example when $\alpha$ and $\sigma$ are trivial and $A$ has a nontrivial automorphism. 
However, if we take $G= {\rm Aut}(A)$ and $\sigma$ is trivial, it may happen that $G$ has a trivial center (e.g.\ $A= C(\Omega)$ where $\Omega $ denotes the Cantor set), so this observation can not be applied. 
\end{remark}

\section{Multipliers and positive definiteness}

\subsection{Multipliers} 

Let $T\in L(\Sigma)$. 
For each $f \in C_c(\Sigma)$,  define $ T\cdot f \in C_c(\Sigma)$ by 
$$\big(T\cdot f\big)(g) = T(g,f(g)) \,  \quad\text{for all } g\in G\,.$$
In other words, $\big(T\cdot f\big)(g)= T_g\big(f(g)\big)$ when $g\in G$.
We recall from \cite{BeCo2} that $T$ is called a ({\it reduced}) {\it multiplier} of $\Sigma$ whenever
 there exists a  bounded linear map
$M_T: C_{\rm r}^*(\Sigma) \to C_{\rm r}^*(\Sigma)$ such that
$$M_T\big(\Lambda_\Sigma(f)\big) = \Lambda_\Sigma(T\cdot f)\,$$ 
for all $ f \in C_c(\Sigma)$. 
   
We let  $MA(\Sigma)$ denote the set consisting of all (reduced) multipliers of $\Sigma$.
The subset of $MA(\Sigma)$ consisting of all  {\it completely bounded} (reduced) multipliers,  that is, of multipliers satisfying $\|M_T\|_{\rm cb} < \infty$, is denoted by $M_0A(\Sigma)$.

\smallskip As an example, consider $\varphi: G \to {\mathbb C}$ and define $T^\varphi(g,a)=\varphi(g) a$ for $g\in G $ and $a \in A$.
If $T^\varphi \in MA(\Sigma)$, then $\varphi \in MA(G)$. The converse statement holds when $A=\Complessi$ \cite{BeCo5}, but we don't know whether it is true in general. Anyhow, it can be shown \cite[Corollary 4.7]{BeCo3} that
$T^\varphi \in M_0A(\Sigma)$ if and only if  $\varphi \in M_0A(G)$.

\smallskip  One may associate completely bounded (reduced) multipliers to elements of $B(\Sigma)$ (cf.\ \cite[Theorem 4.8]{BeCo3}):

\begin{theorem}\label{equiv-coeff}
Let  $(\rho,v)$ be an equivariant representation of $\Sigma$ on a Hilbert $A$-module $X$ and let $x, y \in X$.    
Set $T= T_{\rho,v,x,y} \in B(\Sigma)$. Then $T \in M_0A(\Sigma)$,  with $\| M_T \|_{\rm cb}   \leq \|x\| \, \|y\|$.
Moreover, if $x=y$, then $M_T$ is completely positive, with $\| M_T \|_{\rm cb} = \|x\|^2\,.$
 \end{theorem}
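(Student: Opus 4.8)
The plan is to realize $M_T$ as a "compression", by two operators built from $x$ and $y$, of a representation of $C_{\rm r}^*(\Sigma)$ obtained by tensoring $(\rho,v)$ with the regular covariant representation of $\Sigma$. First I would form the covariant representation $(\rho\dot\otimes\ell_\Sigma\,,\,v\dot\otimes\lambda_\Sigma)$ of $\Sigma$ on the internal tensor product Hilbert $A$-module $X\otimes_{\ell_\Sigma}A^\Sigma$, as recalled above, and let $\Phi_0 := (\rho\dot\otimes\ell_\Sigma)\times(v\dot\otimes\lambda_\Sigma)$ be the associated representation of $C^*(\Sigma)$, so that $\Phi_0(f) = \sum_{g\in G}(\rho\dot\otimes\ell_\Sigma)(f(g))\,(v\dot\otimes\lambda_\Sigma)(g)$ for $f\in C_c(\Sigma)$. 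The key input is a version of Fell's absorption principle (as in \cite{BeCo3}, also used in \cite{BeCo4}), ensuring that $\Phi_0$ factors through $\Lambda_\Sigma$: there is a representation $\Phi : C_{\rm r}^*(\Sigma)\to\L(X\otimes_{\ell_\Sigma}A^\Sigma)$ with $\Phi\circ\Lambda_\Sigma = \Phi_0$. It is precisely here that one uses that we tensored with a \emph{regular} covariant representation.

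Next, for $z\in X$ I would consider the operator $L_z : A^\Sigma \to X\otimes_{\ell_\Sigma}A^\Sigma$ given by $L_z\zeta = z\dot\otimes\zeta$. A routine computation with the internal tensor product inner product shows that $L_z$ is adjointable, with $L_z^*(z'\dot\otimes\zeta') = \ell_\Sigma(\langle z,z'\rangle)\,\zeta'$ on simple tensors and $L_z^*L_z = \ell_\Sigma(\langle z,z\rangle)$, so that $\|L_z\| = \|z\|$. I then set $M_T := L_x^*\,\Phi(\cdot)\,L_y : C_{\rm r}^*(\Sigma)\to\L(A^\Sigma)$. Unwinding the formulas for $\dot\otimes$ and identifying $A$ with $\ell_\Sigma(A)$, one gets, for $f\in C_c(\Sigma)$ and $\zeta\in A^\Sigma$,
$$M_T\big(\Lambda_\Sigma(f)\big)\zeta = L_x^*\Big(\sum_{g\in G}\rho(f(g))v(g)y\,\dot\otimes\,\lambda_\Sigma(g)\zeta\Big) = \sum_{g\in G}\big\langle x,\rho(f(g))v(g)y\big\rangle\lambda_\Sigma(g)\zeta = \Lambda_\Sigma(T\cdot f)\zeta.$$
Hence $M_T(\Lambda_\Sigma(f)) = \Lambda_\Sigma(T\cdot f)$ for all $f\in C_c(\Sigma)$. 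Since $M_T$ is bounded (a composition of bounded maps), $\Lambda_\Sigma(C_c(\Sigma))$ is dense in $C_{\rm r}^*(\Sigma)$, and $C_{\rm r}^*(\Sigma)$ is norm-closed in $\L(A^\Sigma)$, the map $M_T$ sends $C_{\rm r}^*(\Sigma)$ into itself and witnesses $T\in MA(\Sigma)$. As $\Phi$ is a homomorphism, hence completely contractive, and $M_T = L_x^*\,\Phi(\cdot)\,L_y$, we obtain $\|M_T\|_{\rm cb}\leq\|L_x\|\,\|L_y\| = \|x\|\,\|y\|$, so $T\in M_0A(\Sigma)$.

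Finally, when $x=y$ the map $M_T = L_x^*\,\Phi(\cdot)\,L_x$ has Stinespring form $V^*\Phi(\cdot)V$ with $V=L_x$ and $\Phi$ a homomorphism, hence is completely positive. For a completely positive map on a unital C$^*$-algebra the completely bounded norm equals the norm of the image of the unit, and here, using $v(e)={\rm id}_X$ (which follows from property~(ii) together with $\sigma(e,e)=1$), one computes $M_T(1) = M_T\big(\Lambda_\Sigma(1\odot e)\big) = \Lambda_\Sigma\big(\langle x,x\rangle\odot e\big) = \langle x,x\rangle$, whence $\|M_T\|_{\rm cb} = \|\langle x,x\rangle\| = \|x\|^2$.

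I expect the only genuine difficulty to be the Fell absorption step — showing that $\Phi_0$ descends to a representation of $C_{\rm r}^*(\Sigma)$ — which is however available from earlier work; the remaining steps are routine manipulations with internal tensor products and adjointable operators.
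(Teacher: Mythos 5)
Your proposal is correct and follows essentially the same route as the paper: Theorem \ref{equiv-coeff} is quoted from \cite[Theorem 4.8]{BeCo3}, whose proof is exactly this Fell-absorption-plus-compression argument, and the proof of the full-crossed-product analogue (Theorem \ref{equiv-coeff-full}) given in this paper is the same compression $V_x^*\,[\cdot]\,V_y$ with a faithful representation of $C^*(\Sigma)$ playing the r{\^o}le of your factored map $\Phi$. You correctly isolate the one nontrivial input — that $(\rho\dot\otimes\ell_\Sigma)\times(v\dot\otimes\lambda_\Sigma)$ factors through $\Lambda_\Sigma$ — which is precisely the absorption result supplied by the earlier work.
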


It follows that if $T\in B(\Sigma)$, then $T \in M_0A(\Sigma)$, with $\|M_T\|_{\rm cb} \leq \|T\|$.
The following partial converse of Theorem \ref{equiv-coeff} holds.

\begin{theorem} \label{conv-red-mult}
Assume $T$ is a $($reduced$)$ multiplier of $\Sigma$ such that $M_T$ is completely positive. Then $T= T_{\rho, v, x, x}$ for some 
equivariant representation $(\rho, v)$ of $\Sigma$ on a Hilbert $A$-module $X$ and $x \in X$. 
\end{theorem}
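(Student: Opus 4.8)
The plan is to run a Gelfand--Naimark--Segal construction over the coefficient algebra $A$, using the complete positivity of $M_T$ to supply the positivity that drives it. When $A=\Complessi$ this is exactly the classical fact that $M_\varphi$ completely positive on $C^*_{\rm r}(G)$ forces $\varphi$ to be positive definite; when $G=\{e\}$ it is Paschke's GNS construction for a completely positive map of $A$ into $A$; the general case intertwines the two, the only genuinely new ingredient being the $A$-bimodule and $2$-cocycle bookkeeping.

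\smallskip\emph{Step 1: complete positivity yields a positive $A$-valued kernel on $A\times G$.} For $c\in A$ and $g\in G$ set $z_{c,g}:=\ell_\Sigma(c)\lambda_\Sigma(g)\in C^*_{\rm r}(\Sigma)$, using the identifications $A\subseteq C^*_{\rm r}(\Sigma)$ and $\lambda_\Sigma(g)=\Lambda_\Sigma(1\odot g)$. By the covariance relation together with $\lambda_\Sigma(g)\lambda_\Sigma(h)=\ell_\Sigma(\sigma(g,h))\lambda_\Sigma(gh)$ and $\lambda_\Sigma(g)^*=\ell_\Sigma(\sigma(g,g^{-1})^*)\lambda_\Sigma(g^{-1})$ one computes $z_{c,g}^*z_{c',g'}=\Lambda_\Sigma\big(e(c,g;c',g')\odot(g^{-1}g')\big)$ with $e(c,g;c',g')=\alpha_g^{-1}(c^*c')\,\sigma(g,g^{-1})^*\,\sigma(g^{-1},g')\in A$, hence $M_T(z_{c,g}^*z_{c',g'})=\Lambda_\Sigma\big(T(g^{-1}g',e(c,g;c',g'))\odot(g^{-1}g')\big)$ by the defining property of a multiplier. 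Complete positivity of $M_T$ gives $\big[M_T(z_{c_i,g_i}^*z_{c_j,g_j})\big]_{i,j}\ge 0$ in $M_n(C^*_{\rm r}(\Sigma))$ for every finite family $(c_i,g_i)_{i=1}^n$ in $A\times G$. Conjugating by the unitary $\operatorname{diag}(\lambda_\Sigma(g_1)^*,\dots,\lambda_\Sigma(g_n)^*)$ of $M_n(C^*_{\rm r}(\Sigma))$ replaces the $(i,j)$-entry by $\lambda_\Sigma(g_i)\,M_T(z_{c_i,g_i}^*z_{c_j,g_j})\,\lambda_\Sigma(g_j)^*$, whose ``group part'' $g_i(g_i^{-1}g_j)g_j^{-1}=e$ collapses, so this entry lies in $\ell_\Sigma(A)$; writing it as $\ell_\Sigma\big(B_T(c_i,g_i;c_j,g_j)\big)$ and using that $\ell_\Sigma$ is a faithful representation of $A$, we conclude
$$\big[\,B_T(c_i,g_i;c_j,g_j)\,\big]_{i,j}\ \ge\ 0\qquad\text{in }M_n(A).$$
The element $B_T(c,g;c',g'):=\ell_\Sigma^{-1}\big(\lambda_\Sigma(g)\,M_T(z_{c,g}^*z_{c',g'})\,\lambda_\Sigma(g')^*\big)$ satisfies, thanks to the elementary identities $z_{ac,g}=\ell_\Sigma(a)z_{c,g}$, $z_{\alpha_s(c)\sigma(s,g),\,sg}=\lambda_\Sigma(s)z_{c,g}$ and the fact that $M_T$ is $*$-preserving, the algebraic relations $B_T(c',g';c,g)=B_T(c,g;c',g')^*$, $\;B_T(ac,g;a'c',g')=B_T(c,g;a^*a'c',g')$, $\;B_T\big(\alpha_s(c)\sigma(s,g),sg;\alpha_s(c')\sigma(s,g'),sg'\big)=\sigma(s,g)^*\alpha_s\big(B_T(c,g;c',g')\big)\sigma(s,g')$, and $B_T(1,e;a,g)=T(g,a)$ (the last because $z_{1,e}=1$ and $z_{1,e}^*z_{a,g}=\Lambda_\Sigma(a\odot g)$).

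\smallskip\emph{Steps 2 and 3: GNS and recovery of $T$.} Let $V_0$ be the algebraic tensor product of $C_c(\Sigma)$ with $A$, with right $A$-action $(f\otimes a)\cdot b:=f\otimes(ab)$, and equip it with the $A$-valued form $\big\langle\sum_k f_k\otimes a_k,\ \sum_l f'_l\otimes a'_l\big\rangle_T:=\sum_{k,l}a_k^*\big(\sum_{g,h\in G}B_T(f_k(g),g;f'_l(h),h)\big)a'_l$, which is positive semidefinite by Step 1. Let $N=\{\eta\in V_0:\langle\eta,\eta\rangle_T=0\}$ (a submodule, by the $A$-valued Cauchy--Schwarz inequality) and let $X$ be the Hilbert $A$-module obtained by completing $V_0/N$. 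Define $\rho(a)$ and $v(s)$ on $V_0$ (treating $(c\odot g)\otimes b$ as a stand-in for ``$\rho(c)v(g)\zeta\cdot b$'') by $\rho(a)\big((c\odot g)\otimes b\big)=(ac\odot g)\otimes b$ and $v(s)\big((c\odot g)\otimes b\big)=\big(\alpha_s(c)\sigma(s,g)\odot sg\big)\otimes\sigma(s,g)^*\alpha_s(b)$. The relations of Step 1 give $\|\rho(a)\eta\|\le\|a\|\,\|\eta\|$, $\rho(a)^*=\rho(a^*)$, and $\langle v(s)\eta,v(s)\eta'\rangle_T=\alpha_s(\langle\eta,\eta'\rangle_T)$; hence $\rho(a)$ and $v(s)$ pass to $V_0/N$ and extend to $X$, with $\rho\colon A\to\L(X)$ a representation and each $v(s)\in\I(X)$ (invertibility of $v(s)$ coming from property (ii), which gives $v(s)v(s^{-1})={\rm ad}_\rho(\sigma(s,s^{-1}))$ and $v(e)={\rm id}$). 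A direct check using the cocycle identities shows that properties (i)--(iv) hold, so $(\rho,v)$ is an equivariant representation of $\Sigma$ on $X$. Finally let $x\in X$ be the class of $(1\odot e)\otimes 1$; then $v(g)x$ is the class of $(1\odot g)\otimes 1$ and $\rho(a)v(g)x$ the class of $(a\odot g)\otimes 1$, so $T_{\rho,v,x,x}(g,a)=\langle x,\rho(a)v(g)x\rangle=B_T(1,e;a,g)=T(g,a)$, i.e.\ $T=T_{\rho,v,x,x}$.

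\smallskip The conceptual heart is Step 1, and in particular the insight — the analogue of the classical group-case trick — that one must conjugate the positivity matrix by $\operatorname{diag}(\lambda_\Sigma(g_i)^*)$ in order to ``see'' all the maps $T_g$ at once (merely applying the canonical conditional expectation $E$ to $\big[M_T(z_{c_i,g_i}^*z_{c_j,g_j})\big]$ would only recover $T_e$). Beyond that, the work is bookkeeping with the twisted action and the $2$-cocycle relations rather than anything deep: the one computation needing care is $z_{c,g}^*z_{c',g'}=\Lambda_\Sigma(e\odot(g^{-1}g'))$, after which the identities for $B_T$ follow cleanly, and the bulkiest (but entirely mechanical) part is the verification of (i)--(iv) for $(\rho,v)$ in Step 2. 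One should also note that, $M_T$ being completely positive, it is automatically bounded and $*$-preserving, which is all that is used. This construction is, unsurprisingly, a forerunner of the one behind the Gelfand--Raikov type theorem of the following subsection.
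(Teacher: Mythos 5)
Your argument is correct, but it is not the argument the paper uses for this theorem: you have in effect rediscovered the alternative route that the paper itself assembles afterwards out of Proposition \ref{M-cp}, Corollary \ref{M-cpCor} and Theorem \ref{GR} (see the remark following Corollary \ref{GR-Cor}). The paper's own proof of Theorem \ref{conv-red-mult} is more top-down: it applies the KSGNS construction to the completely positive map $L\circ M_T\colon C^*_{\rm r}(\Sigma)\to\L(C^*_{\rm r}(\Sigma))$ ($L$ being left multiplication), obtaining a C$^*$-correspondence $Y$ over $B=C^*_{\rm r}(\Sigma)$ built from $B\odot B$ with $\langle b_1\otimes c_1,b_2\otimes c_2\rangle_B=c_1^*M_T(b_1^*b_2)c_2$; it then localizes $Y$ to a Hilbert $A$-module via the conditional expectation $E$ and sets $\rho(a)y=\pi(a)y$, $v(g)y=\pi(\lambda_\Sigma(g))(y\cdot\lambda_\Sigma(g)^*)$, the verification that this yields an equivariant representation being subsumed in the general Proposition \ref{rv} on correspondences over $C^*_{\rm r}(\Sigma)$; the vector is $x=1\dot\otimes 1$. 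Your Step 1 is, almost verbatim, the paper's Proposition \ref{M-cp}: the conjugation by ${\rm diag}(\lambda_\Sigma(g_i)^*)$ is exactly the device used there, and your kernel $B_T$ coincides with the kernel $[(g,a),(h,b)]_T$ from the definition of $\Sigma$-positive definiteness. Your Steps 2--3 then reproduce the Kolmogorov/GNS construction carried out in the proof of Theorem \ref{GR}. What your route buys is the intrinsic intermediate statement ($T$ is $\Sigma$-positive definite) and an explicit cyclic vector; what the paper's route buys is brevity, by outsourcing all positivity bookkeeping to Lance's KSGNS theorem.

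One cocycle slip should be fixed, though it is harmless. One has $\lambda_\Sigma(g)^*=\sigma(g^{-1},g)^*\,\lambda_\Sigma(g^{-1})$, not $\sigma(g,g^{-1})^*\,\lambda_\Sigma(g^{-1})$ (these differ since $\sigma(g,g^{-1})=\alpha_g(\sigma(g^{-1},g))$). Consequently the correct coefficient is $e(c,g;c',g')=\alpha_g^{-1}(c^*c')\,\sigma(g^{-1},g)^*\,\sigma(g^{-1},g')$, which by the cocycle identities equals $\alpha_g^{-1}\big(c^*c'\,\sigma(g,g^{-1}g')^*\big)$ --- precisely the expression occurring in Definition \ref{pd}. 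Nothing downstream is affected, because every identity you use for $B_T$ is derived from the operator relations $z_{ac,g}=a\,z_{c,g}$ and $z_{\alpha_s(c)\sigma(s,g),\,sg}=\lambda_\Sigma(s)\,z_{c,g}$ rather than from the explicit formula for $e$.
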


\begin{proof}
We will use the KSGNS construction for completely positive maps \cite[Theorem 5.6]{La1}. We set $B= C_{\rm r}^*(\Sigma)$ and consider $B$ as a (right) Hilbert  module over itself in the natural way. Let $L: B \to \L(B)$ denote the left multiplication map and set $\widetilde{M_T} = L\circ M_T: B \to \L(B)$. Then $\widetilde{M_T}$ is completely positive, so the KSGNS construction provides us with a Hilbert $B$-module $Y$  and a representation $\pi: B \to \L(Y)$: one first defines a  $B$-valued semi-inner product on the (right) $B$-module $B\odot B$, given on simple tensors by 
$$\langle b_1\otimes c_1 \, , \, b_2 \otimes c_2\rangle_B  = c_1^{\,*} \,\big[\widetilde{M_T}\big( b_1^{\,*} \,b_2 \big)\big]\, (c_2) = 
c_1^{\,*} \,M_T( b_1^{\,*} \,b_2)\, c_2$$
for $b_1, b_2, c_1, c_2 \in B$, and let $Y$ be the completion of  the pre-Hilbert  $B$-module $(B\odot B)/N$, where $N=\{ z \in B\odot B \mid \langle z, z\rangle_B=0\}.$  Writing $b\,\dot\otimes \,c$ for the coset $(b\otimes c) + N$, the representation $\pi$ is then determined by $\pi (d)(b\,\dot\otimes \,c) = db\,\dot\otimes \,c$  for $b,c, d \in B$. Now, we may localize $Y$ using $E$, that is, we consider $Y$ as a pre-Hilbert $A$-module with respect to $\langle y, z \rangle_A = E\big(\langle y, z \rangle_B\big)$ and let $X$ denote the Hilbert $A$-module obtained after completion. Then we let $\rho:A\to \L(X)$ be the representation  determined by $\rho(a)\, y = \pi(a) \, y$ for $a\in A$ and $ y \in Y$, and let $v:G\to \I(X)$ be determined by
$$v(g) \,y = \pi(\lambda_\Sigma(g)) \big(y\cdot \lambda_\Sigma(g)^*\big)\,.$$
It is easy to check that $\rho$ and $v$ are well-defined, and that $(\rho, v)$ is an equivariant representation of $\Sigma$ on $X$. (A more general result is proven in Proposition \ref{rv}). 

 \smallskip Set $x = 1\dot\otimes 1 \in X$ and let $a\in A$ and $g\in G$. Then  we have 
$$v(g)\, x =   \pi(\lambda_\Sigma(g)) \big((1\dot\otimes 1) \cdot \lambda_\Sigma(g)^*\big)= \lambda_\Sigma(g)\dot\otimes\lambda_\Sigma(g)^*\,,$$
so 

\vspace{-3 ex} $$  \rho(a)\,v(g)\, x = \pi(a) \big(\lambda_\Sigma(g)\dot\otimes\lambda_\Sigma(g)^*\big) = (a\,\lambda_\Sigma(g))\dot\otimes\lambda_\Sigma(g)^*\,,$$ which gives
\begin{align*} \big\langle x\,, \,\rho(a)\,v(g)\, x\big\rangle_B &=
 \langle 1\dot\otimes 1\, , (a\,\lambda_\Sigma(g))\dot\otimes\lambda_\Sigma(g)^* \rangle_B\\
 &= 
 M_T\big(a\lambda_\Sigma(g)\big)\,\lambda_\Sigma(g)^* = T_g(a) \, \lambda_\Sigma(g)\,\lambda_\Sigma(g)^* \\
 &= T_g(a)\,.
 \end{align*}
 
 \vspace{-2 ex}\noindent Hence we get

\vspace{-2 ex}
$$\big\langle x\,, \,\rho(a)\,v(g)\, x\big\rangle_A = E \big(\langle x\,, \,\rho(a)\,v(g)\, x\rangle_B\big)= E(T_g(a)) = T_g(a)\,, $$
as desired.
\end{proof}

\begin{definition}
Let $T\in L(\Sigma)$.  We say that $T$ is a  {\it full multiplier} of $\Sigma$ whenever
 there exists a  bounded linear map
$\Phi_T: C^*(\Sigma) \to C^*(\Sigma)$ such that
$$\Phi_T\big(f\big) =T\cdot f\,$$
for every $f \in C_c(\Sigma)$. If $\Phi_T$ is completely bounded, then we say that $T$ is  a {\it completely bounded full multiplier of $\Sigma$}. The set of all full  (resp.\  completely bounded full) multipliers of $\Sigma$ will be denoted by $M^{\rm u}(\Sigma)$ (resp.\ $M^{\rm u}_{\rm cb}(\Sigma)$).
\end{definition}

The following result is known for full group C$^*$-algebras \cite{Wal1,Pis2}, i.e.\ when $A=\Complessi$, $\alpha={\rm id}$ and $\sigma=1$. 

\begin{theorem}\label{equiv-coeff-full}
Let  $(\rho,v)$ be an equivariant representation of $\Sigma$ on a Hilbert $A$-module $X$ and let $x, y \in X$.    
Set $T=T_{\rho,v, x, y}\in B(\Sigma)$. Then $T$ is a completely bounded full multiplier of $\Sigma$ satisfying $\| \Phi_T \|_{\rm cb}   \leq \|x\| \, \|y\|$. 
Moreover, if $x=y$, then $\Phi_T$ is completely positive, with $\| \Phi_T \|_{\rm cb} = \|x\|^2\,.$
 \end{theorem}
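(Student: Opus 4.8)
The plan is to imitate the argument available in the group case (where $C^*(\Sigma)$ is $C^*(G)$), replacing ``tensoring with a unitary representation of $G$'' by the operation, recalled in subsection~2.2, of tensoring the equivariant representation $(\rho,v)$ with a covariant representation of $\Sigma$. The natural covariant representation to use here is $\Sigma$ acting on $C^*(\Sigma)$ itself. Concretely, I would set $B = C^*(\Sigma)$, regard $B$ as a Hilbert module over itself, and let $L\colon B \to \L(B)$ be the left multiplication map; since $B$ is unital, $L$ is an isometric $*$-isomorphism of $B$ onto $\L(B)$. Put $(\pi,u) := (L\circ i_A,\, L\circ i_G)$, which is clearly a covariant representation of $\Sigma$ on the Hilbert $B$-module $B$. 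By \cite[Section 4]{BeCo3} (as recalled in subsection~2.2), $(\rho\dot\otimes\pi\,,\,v\dot\otimes u)$ is then a covariant representation of $\Sigma$ on the internal tensor product Hilbert $B$-module $X\otimes_\pi B$, so it integrates to a homomorphism $\Theta := (\rho\dot\otimes\pi)\times(v\dot\otimes u)\colon C^*(\Sigma)\to\L(X\otimes_\pi B)$.

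Next, for $z\in X$ I would consider the operator $T_z\colon B \to X\otimes_\pi B$ given by $T_z(b) = z\dot\otimes b$; one checks it is adjointable with $T_z^*(z'\dot\otimes b) = i_A(\langle z,z'\rangle)\,b$, and that $\|T_z\|\le\|z\|$ (using $\|z\dot\otimes b\|\le\|z\|\,\|b\|$). I then \emph{define} $\Phi_T\colon C^*(\Sigma)\to C^*(\Sigma)$ by
$$\Phi_T(c) := L^{-1}\big(T_x^{\,*}\,\Theta(c)\,T_y\big) = T_x^{\,*}\big(\Theta(c)(y\dot\otimes 1_B)\big)\,.$$
Since $\Theta$ is a homomorphism (hence completely contractive) and $L^{-1}$ is a complete isometry, the map $\Phi_T$ is completely bounded with $\|\Phi_T\|_{\rm cb}\le\|T_x\|\,\|T_y\|\le\|x\|\,\|y\|$.

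It remains to verify that $\Phi_T(f)=T\cdot f$ for all $f\in C_c(\Sigma)$. By linearity it suffices to treat $f = a\odot g = i_A(a)\,i_G(g)$ with $a\in A$, $g\in G$. Using $u(g)1_B = i_G(g)$ one computes
$$\Theta\big(i_A(a) i_G(g)\big)(y\dot\otimes 1_B) = (\rho\dot\otimes\pi)(a)\big(v(g)y\dot\otimes i_G(g)\big) = \rho(a)v(g)y\,\dot\otimes\, i_G(g)\,,$$
so $\Phi_T\big(i_A(a)i_G(g)\big) = i_A\big(\langle x,\rho(a)v(g)y\rangle\big)\,i_G(g) = i_A\big(T(g,a)\big)\,i_G(g) = T(g,a)\odot g$; summing over $g$ gives $\Phi_T(f) = T\cdot f$. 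This shows $T$ is a completely bounded full multiplier with the asserted norm bound. Finally, if $x=y$, then $c\mapsto T_x^{\,*}\Theta(c)T_x$ is completely positive, hence so is $\Phi_T$ (as $L^{-1}$ is a $*$-isomorphism), and for the unital C$^*$-algebra $C^*(\Sigma)$ one has $\|\Phi_T\|_{\rm cb} = \|\Phi_T(1)\| = \|i_A(\langle x,x\rangle)\| = \|x\|^2$, since $i_A$ is isometric.

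The only points needing a bit of care are the identification $\L(B) = C^*(\Sigma)$ for the unital self-module $B=C^*(\Sigma)$ — this is exactly what makes $\Phi_T$ land back in $C^*(\Sigma)$, rather than having to recognize a compression on a Hilbert space as lying in the image of a faithful covariant representation — and the fact that $(\rho\dot\otimes\pi,v\dot\otimes u)$ really is covariant; but the latter is precisely the construction imported from \cite{BeCo3}. Beyond keeping track of the twist $\sigma$ in the computations above, I do not expect any serious obstacle.
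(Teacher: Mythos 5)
Your proof is correct and follows essentially the same route as the paper: both arguments tensor $(\rho,v)$ with a covariant representation of $\Sigma$ via the construction from \cite{BeCo3} recalled in subsection~2.2, and then compress by the adjointable maps $b\mapsto z\,\dot\otimes\, b$ to get a completely bounded (resp.\ completely positive, when $x=y$) map with the stated norm. The only genuine difference is your choice of covariant representation, namely $(L\circ i_A, L\circ i_G)$ acting on the Hilbert module $C^*(\Sigma)$ itself with $\L(C^*(\Sigma))\cong C^*(\Sigma)$, instead of a faithful covariant representation $\Pi=\pi\times u$ on a Hilbert space; this makes the compressed map land in $C^*(\Sigma)$ automatically and neatly bypasses the step in the paper's proof where one must verify that the range of $\psi$ is contained in the range of $\Pi$ before writing $\Phi_T=\Pi^{-1}\circ\psi$.
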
 
 
 \begin{proof} 
 Let $\Pi$ be a faithful representation of $C^*(\Sigma)$ on a Hilbert space $\H$ and write $\Pi = \pi\times u$ for a   covariant representation $(\pi, u)$ of $\Sigma$ on $\H$. Note that the pair
 $(\rho\dot\otimes\pi\,,\, v\dot\otimes u)$ is then a  covariant representation of
$\Sigma$ on  the Hilbert space $X\, {\otimes_\pi}\H$. 

For $z\in X$, let $V_z: \H \to X\, {\otimes_\pi}\H$ denote the bounded operator
determined  by $$V_z(\xi) = z \dot\otimes \xi$$ for $\xi \in \H$ and note that its adjoint operator $V_z^*$ satisfies $$V_z^*(z'\dot\otimes\eta) = \pi(\langle z, z'\rangle)\, \eta$$ for $z' \in X$ and $\eta \in \H$. 

Let then $\psi: C^*(\Sigma) \to \L(X\, {\otimes_\pi}\H)$ be the completely bounded linear map given by 
$$\psi(b)= V_x^*\,[(\rho\dot\otimes\pi)\times (v\dot\otimes u)] (b)\, V_y$$ 
for $b\in C^*(\Sigma)$, which satisfies
$$\| \psi \|_{\rm cb} \leq \|V_x^*\| \, \|V_y\| = \|x\|\, \|y\|\,,$$
see for example \cite{Pau}.

Consider  now $f\in C_c(\Sigma)$. We claim that $\Pi(T\cdot f)=\psi(f)$. Indeed, for all $\xi, \eta \in \H$, we have
 \begin{align*}  
 \big\langle \xi\,, \, \Pi(T\cdot f)\,\eta\big\rangle &= \big\langle \xi\,, \, (\pi\times u)(T\cdot f)\,\eta\big\rangle  \\
&= \sum_{g\in G} \, \big\langle \xi \,,\, \pi\big(\langle x, \rho(f(g))v(g)y\rangle\big)u(g) \eta\big\rangle \\
 & =\sum_{g\in G} \, \big\langle x\dot\otimes \xi \, , \,\rho(f(g))v(g)y\,\dot\otimes \,u(g) \eta\big\rangle \\
 &=  \sum_{g\in G} \, \big\langle x\dot\otimes \xi \, , \, (\rho\dot\otimes\pi) (f(g)) \, (v\dot\otimes u)(g) (y\,\dot\otimes \eta)\big\rangle \\
& =\big\langle x\dot\otimes \xi \, ,\, [(\rho\dot\otimes\pi)\times (v\dot\otimes u)](f) \,(y\dot\otimes \eta)\big\rangle \\
&=\big\langle  \xi \, ,\, V_x^*\,[(\rho\dot\otimes\pi) \times (v\dot\otimes u)](f)\, V_y\,\eta \big\rangle \\ 
&=\big\langle  \xi \, ,\, \psi(f)\,\eta \big\rangle\,.
\end{align*}
Hence, we get that $$\|T\cdot f\|_{\rm u} = \|\Pi(T\cdot f)\| = \| \psi(f)\| \leq \|\psi\|\,\|f\|_u\,.$$ This shows that the linear map $f \to T\cdot f$ is bounded, and can therefore be extended to a bounded linear map $\Phi_T$ from $C^*(\Sigma)$ into itself. By continuity of the involved maps and density of $C_c(\Sigma)$ in $C^*(\Sigma)$,  we get $\Pi\circ \Phi_T = \psi$. It follows that the range of $\psi$ is contained in the range of $\Pi$, and we may therefore write $\Phi_T = \Pi^{-1}\circ \psi$, where $\Pi^{-1}$ denotes the inverse of the isomorphism $\Pi:C^*(\Sigma)\to \Pi(C^*(\Sigma))$. As $\Pi^{-1}$ and $\psi$ are both completely bounded,  $\Phi_T$ is also completely bounded, with 
$$\| \Phi_T\|_{\rm cb}  \leq \|\Pi^{-1}\|_{\rm cb}\, \| \psi \|_{\rm cb} \leq  \|x\|\, \|y\|\,,$$
as asserted. 

If $x=y$, then $\Pi^{-1}$ and $\psi$  are both completely positive, so $\Phi_T$ is also completely positive, with 
$$\|\Phi_T\|_{\rm cb} = \|\Phi_T(I)\| = \|V_x^*V_x\| = \|\pi(\langle x, x\rangle)\| = \|\langle x, x\rangle\| = \|x\|^2$$
(as $\pi= \Pi\circ i_A$ is faithful).
 \end{proof}
 
The following partial converse of Theorem \ref{equiv-coeff-full} holds.

\begin{theorem} \label{conv-full-mult}
Assume $T$ is a full multiplier of $\Sigma$ such that $\Phi_T$ is completely positive. Then $T= T_{\rho, v, x, x}$ for some equivariant representation $(\rho, v)$ of $\Sigma$ on a Hilbert $A$-module $X$ and $x \in X$. 
\end{theorem}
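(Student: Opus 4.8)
The plan is to mimic, almost verbatim, the proof of Theorem \ref{conv-red-mult}, with $C^*_{\rm r}(\Sigma)$ replaced by $C^*(\Sigma)$, the completely positive map $M_T$ replaced by $\Phi_T$, and the conditional expectation $E$ replaced by the unital completely positive map $E^{\rm u} := E\circ\Lambda_\Sigma : C^*(\Sigma)\to A$. The single new ingredient needed is the observation that $E^{\rm u}$ behaves like a conditional expectation onto $A$: from $\Lambda_\Sigma\circ i_A = \ell_\Sigma$ we get $E^{\rm u}\circ i_A = {\rm id}_A$ (identifying $A$ with $i_A(A)\subseteq C^*(\Sigma)$), and from $\Lambda_\Sigma(i_G(g)) = \lambda_\Sigma(g)$ together with the equivariance of $E$ we get $E^{\rm u}\big(i_G(g)\, b\, i_G(g)^*\big) = \alpha_g\big(E^{\rm u}(b)\big)$ for all $g\in G$ and $b\in C^*(\Sigma)$. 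Note that $E^{\rm u}$ need not be faithful when $\Sigma$ is not regular, but faithfulness plays no role in what follows.

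Concretely, I would set $B = C^*(\Sigma)$, viewed as a Hilbert $B$-module over itself, and apply the KSGNS construction \cite{La1} to the completely positive map $\widetilde{\Phi_T} := L\circ\Phi_T : B\to\L(B)$, where $L$ denotes left multiplication. This yields a Hilbert $B$-module $Y$, obtained as the completion of $(B\odot B)/N$ for the $B$-valued semi-inner product determined on simple tensors by $\langle b_1\otimes c_1, b_2\otimes c_2\rangle_B = c_1^*\,\Phi_T(b_1^*b_2)\,c_2$, together with a representation $\pi:B\to\L(Y)$ satisfying $\pi(d)(b\,\dot\otimes\,c) = db\,\dot\otimes\,c$ (writing $b\,\dot\otimes\,c$ for the class of $b\otimes c$). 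I would then localize $Y$ along $E^{\rm u}$, that is, form the $A$-valued semi-inner product $\langle y,z\rangle_A = E^{\rm u}\big(\langle y,z\rangle_B\big)$ and let $X$ be the Hilbert $A$-module obtained after quotienting and completing, and define $\rho:A\to\L(X)$ by $\rho(a)\,y = \pi(i_A(a))\,y$ and $v:G\to\I(X)$ by $v(g)\,y = \pi(i_G(g))\big(y\cdot i_G(g)^*\big)$. That these are well defined on $X$ (not merely on $Y$), and that $(\rho,v)$ is an equivariant representation of $\Sigma$ on $X$, follows by the same verification as in the proof of Theorem \ref{conv-red-mult}, using the positivity of $E^{\rm u}$ for $\rho$ and its equivariance for $v$; this is a special case of the more general Proposition \ref{rv}.

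Finally, I would take $x = 1\,\dot\otimes\,1\in X$. Since $i_G(g)$ is unitary in $C^*(\Sigma)$, one gets $v(g)\,x = i_G(g)\,\dot\otimes\,i_G(g)^*$, hence $\rho(a)\,v(g)\,x = \big(i_A(a)\,i_G(g)\big)\,\dot\otimes\,i_G(g)^*$, and therefore $\langle x, \rho(a)\,v(g)\,x\rangle_B = \Phi_T\big(i_A(a)\,i_G(g)\big)\,i_G(g)^*$. As $i_A(a)\,i_G(g)$ is exactly the element $a\odot g$ of $C_c(\Sigma)$ sitting inside $C^*(\Sigma)$, the defining property of the full multiplier $T$ gives $\Phi_T\big(i_A(a)\,i_G(g)\big) = T\cdot(a\odot g) = T_g(a)\odot g = i_A(T_g(a))\,i_G(g)$, so $\langle x, \rho(a)\,v(g)\,x\rangle_B = i_A(T_g(a))$; applying $E^{\rm u}$ then gives $\langle x, \rho(a)\,v(g)\,x\rangle_A = T_g(a)$, i.e.\ $T = T_{\rho,v,x,x}$. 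I do not expect a serious obstacle here: the proof is essentially a transcription of that of Theorem \ref{conv-red-mult}, the only point deserving attention being that the localization map $E^{\rm u}$, although possibly non-faithful, is still unital, completely positive, equivariant and a left inverse of $i_A$ — which is all that the argument uses. (One could alternatively hope to push $\Phi_T$ down through $\Lambda_\Sigma$ to a completely positive map on $C^*_{\rm r}(\Sigma)$ and then invoke Theorem \ref{conv-red-mult}, but the inclusion $\ker\Lambda_\Sigma\subseteq\ker(\Lambda_\Sigma\circ\Phi_T)$ is not clear for non-regular $\Sigma$, so the direct route above is preferable.)
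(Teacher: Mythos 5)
Your proposal is correct and follows essentially the same route the paper indicates: transcribe the KSGNS-plus-localization argument of Theorem \ref{conv-red-mult} to the full crossed product. The only (harmless) deviation is that the paper applies KSGNS to $L\circ\Lambda_\Sigma\circ\Phi_T$, so its module lives over $C^*_{\rm r}(\Sigma)$ and the localization map $E$ is faithful, whereas you keep the module over $C^*(\Sigma)$ and localize via the possibly non-faithful $E\circ\Lambda_\Sigma$ --- which forces the extra quotient that you correctly identify and handle.
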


The proof is similar to the one for Theorem \ref{conv-red-mult}. One may for example use the KSGNS construction for the completely positive map $\widetilde{\Phi_T}=L\circ \Lambda_\Sigma\circ\Phi_T:C^*(\Sigma) \to \L\big(C_{\rm r}^*(\Sigma)\big)$ (where  $L: C_{\rm r}^*(\Sigma) \to \L\big(C_{\rm r}^*(\Sigma)\big)$ is the left multiplication map).
\begin{remark} For completeness, we mention that one may also consider so-called {\it rf-multipliers} of $\Sigma$ (cf.\ \cite{BeCo3}): if $T \in L(\Sigma)$, then $T$ is a  {\it rf-multiplier} of $\Sigma$ whenever
 there exists a  bounded linear map
$\Psi_T: C_{\rm r}^*(\Sigma) \to C^*(\Sigma)$ such that
$\Psi_T\big(\Lambda_\Sigma(f)\big) = T\cdot f\,$
for every $f \in C_c(\Sigma)$. 
These are used in \cite{BeCo3} in the formulation of the weak approximation property for $\Sigma$, which ensures that $\Sigma$ is regular.
\end{remark}

 \subsection{Positive definiteness} 

 \begin{definition}\label{pd}
 Let $T\in L(\Sigma)$. We say that $T$ is {\it positive definite} (w.r.t.\ $\Sigma$), or that $T$ is {\it $\Sigma$-positive definite}, when for any $n\in \Naturali$, $g_1, \ldots, g_n \in G$ and $a_1, \ldots, a_n \in A$, the matrix
  $$\Big[ \,\alpha_{g_i}\Big(T_{g_i^{-1}g_j}\big(\alpha_{g_i}^{-1}\big(a_i^*\,a_j\,\sigma(g_i, g_i^{-1}g_j)^*\big)\,\big)\Big)\, \sigma(g_i, g_i^{-1}g_j)\,\Big]$$
is positive in $M_n(A)$ (the $n\times n$ matrices over $A$). 
 \end{definition}
 
When $A=\Complessi$, any $T\in L(\Sigma)$ satisfies  $T_g(\lambda) = \lambda\,\varphi(g)$, where $\varphi(g) = T_g(1)$, for $g \in G$ and $\lambda\in \Complessi$, and we see that $T$ is $\Sigma$-positive definite  if and only if $\varphi$ is positive definite. Another motivation for Definition \ref{pd} comes from the following example.
 
  \begin{example} \label{EQRPD} Let  $(\rho,v)$ be an equivariant representation of $\Sigma$ on a Hilbert $A$-module $X$ and let $x \in X$.    
Set $T=T_{\rho,v, x, x}\in B(\Sigma)$. Then $T$ is positive definite (w.r.t.\ $\Sigma$).

\smallskip 
To check this, we first consider $a,\, b \in A$ and $g, \,h \in G$. Then, using the properties of $(\rho, v)$, we get
 
 \medskip\noindent \hspace{2ex}
 $\alpha_{g}\Big(T_{g^{-1}h} \big(\alpha_{g}^{-1}\big(a^*\,b\,\sigma(g, g^{-1}h)^*\big)\,\big)\Big)\, \sigma(g, g^{-1}h)
 $
 
 \vspace{-4ex}
 \begin{align*}
\hspace{4ex} &= \alpha_{g}\big(\big\langle \,x , \, \rho\big(\alpha_{g}^{-1}\big(a^*\,b\,\sigma(g, g^{-1}h)^*\big)\big)\, 
v(g^{-1}h)\, x\,\big\rangle \big)\, \sigma(g, g^{-1}h)\\
&=\big\langle v(g)\,x\, , \, \rho\big(a^*\,b\,\sigma(g, g^{-1}h)^*\big)\,v(g)\, 
v(g^{-1}h)\, x\,\big\rangle \, \sigma(g, g^{-1}h)\\
&=\big\langle \rho(a)\, v(g)\,x\, , \, \rho(b)\,\rho\big(\sigma(g, g^{-1}h)^*\big)\,v(g)\, 
v(g^{-1}h)\, x\,\big\rangle \, \sigma(g, g^{-1}h)\\
&=\big\langle \rho(a)\, v(g)\,x\, , \,
\big(\rho(b)\,v(h)\, x\big)\cdot\sigma(g, g^{-1}h)^*\,\big\rangle \, \sigma(g, g^{-1}h)\\
&=\big\langle \rho(a)\, v(g)\,x\, , \,\rho(b)\,v(h)\, x\,\big\rangle\,.
\end{align*}
Now, let $n\in \Naturali$, $g_1, \ldots, g_n \in G$,  $a_1, \ldots, a_n \in A$ and set 
$$T_{ij} = \alpha_{g_i}\Big(T_{g_i^{-1}g_j}\big( \alpha_{g_i}^{-1}\big(a_i^*\,a_j\,\sigma(g_i, g_i^{-1}g_j)^*\big)\,\big)\Big)\, \sigma(g_i, g_i^{-1}g_j)\,$$
for each $1\leq i, j \leq n$.
Then, by the above computation, we get
$$\big[T_{ij}\big] = \big[\langle x_i\,, x_j\rangle\big]\,,$$ where $x_i = \rho(a_i)\, v(g_i)\, x$. Hence, $[\,T_{ij}\,]$ is positive in $M_n(A)$ (cf.\ \cite[Lemma 4.2]{La1}). 
\end{example}

\begin{remark}\label{PD-conj}
Assume $T \in L(\Sigma)$ is positive definite (w.r.t.\ $\Sigma$). Then  we have $$T_g(a)^* = \Big(\big(\alpha_g\circ T_{g^{-1}}\circ \alpha_g^{-1}\big)(a^*\sigma(g, g^{-1})^*\big)\Big)\, \sigma(g, g^{-1})$$ for all $g\in G$ and $a\in A$. This follows easily after plugging $g_1=e, \, g_2=g, \, a_1=1$ and $a_2 = a$ in the definition and looking at the off-diagonal terms. 
\end{remark}

  \begin{remark}
 Assume $T \in L(\Sigma)$ is positive definite (w.r.t.\ $\Sigma$) and set $\psi =T_e$\,. Thus, $\psi:A\to A$ is the linear map given by $\psi(a) = T(e, a)$ for $a \in A$. Then $\psi$ is completely positive. 
 Indeed, for any given $a_1, \ldots, a_n \in A$,  plugging $g_1=\cdots=g_n=e$ in the definition of positive definiteness of $T$ gives
 that $\big[\psi( a_i^*a_j)\big] = \big[T(e, a_i^*a_j)\big] $ is positive in $M_n(A)$. As is well known, this is equivalent to $\psi$ being completely positive. 
  \end{remark}

  \begin{example}\label{cpequi}
  Let $\theta: A\to A$ be a completely positive map which is $\alpha$-equivariant, i.e., satisfies that $\theta\circ\alpha_g = \alpha_g \circ\theta$ for all $g\in G$.   Let $\Theta: G\times A \to A $ be given by $\Theta(g,a) = \theta(a)$ for all $(g,a)$ in $G\times A$. 
  
  Assume first that $\sigma$ is scalar-valued. Then $\Theta$ is $\Sigma$-positive definite. Indeed, for all $g,h\in G$, we have
   $$\alpha_{g}\Big(\Theta_{g^{-1}h} \big(\alpha_{g}^{-1}\big(a^*\,b\,\sigma(g, g^{-1}h)^*\big)\,\big)\Big)\, \sigma(g, g^{-1}h) = \theta\big(a^*\,b \, \sigma(g, g^{-1}h)^*\big)\sigma(g, g^{-1}h)= \theta\big(a^*\,b )\,,$$
   so the $\Sigma$-positive definiteness of $\Theta$ follows readily from the complete positivity of $\theta$.

If $\sigma$ is not scalar-valued, let us assume that $\theta$ also satisfies that $\theta\circ\sigma = \sigma$. Then $\Theta$ is $\Sigma$-positive definite. Indeed, the above computation can still be carried out, now using that all the $\sigma(g,h)$'s lie in the multiplicative domain of $\theta$, as easily follows from the extra assumption. 
  \end{example}
  
   Another connection to completely positive maps is the following: 
   
  \begin{proposition} \label{M-cp} Assume $M:C_{\rm r}^*(\Sigma) \to C_{\rm r}^*(\Sigma)$ is a completely positive linear map. Define $T_M \in L(\Sigma)$ by 
 $$T_M(g, a) = E\big(\,M(a\lambda_\Sigma(g))\, \lambda_\Sigma(g)^*\big)$$
 for $g \in G$ and $a \in A$. Then $T_M$ is positive definite $($w.r.t.\ $\Sigma$$)$.
 \end{proposition}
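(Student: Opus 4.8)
The plan is to mimic the strategy of Example \ref{EQRPD}, reducing the $\Sigma$-positive definiteness of $T_M$ to the complete positivity of $M$ by rewriting the relevant $n\times n$ matrix as a matrix of inner products in a suitable Hilbert $A$-module, or—more directly—as the image under a completely positive map of a manifestly positive matrix over $C_r^*(\Sigma)$. Concretely, fix $n\in\Naturali$, $g_1,\dots,g_n\in G$ and $a_1,\dots,a_n\in A$, and set $$T_{ij}=\alpha_{g_i}\Big(T_M{}_{g_i^{-1}g_j}\big(\alpha_{g_i}^{-1}(a_i^*a_j\,\sigma(g_i,g_i^{-1}g_j)^*)\big)\Big)\,\sigma(g_i,g_i^{-1}g_j)\,.$$ First I would substitute the definition of $T_M$ and use that $E$ is $\alpha$-equivariant to pull the outer $\alpha_{g_i}$ inside $E$, turning $\alpha_{g_i}(E(\,\cdot\,))$ into $E(\lambda_\Sigma(g_i)\,\cdot\,\lambda_\Sigma(g_i)^*)$. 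This should produce $$T_{ij}=E\Big(\lambda_\Sigma(g_i)\,M\big(b_{ij}\,\lambda_\Sigma(g_i^{-1}g_j)\big)\,\lambda_\Sigma(g_i^{-1}g_j)^*\,\lambda_\Sigma(g_i)^*\Big)\,\sigma(g_i,g_i^{-1}g_j)$$ where $b_{ij}=\alpha_{g_i}^{-1}(a_i^*a_j\,\sigma(g_i,g_i^{-1}g_j)^*)\in A$, viewed inside $C_r^*(\Sigma)$.

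The core computation is then to recognize $\lambda_\Sigma(g_i)\,b_{ij}\,\lambda_\Sigma(g_i^{-1}g_j)$ as a product of the form $x_i^*x_j$ for suitable elements $x_k\in C_r^*(\Sigma)$ built from $a_k$ and $\lambda_\Sigma(g_k)$. The natural guess is $x_k=a_k\,\lambda_\Sigma(g_k)$ (identifying $A$ with $\ell_\Sigma(A)\subset\L(A^\Sigma)$): then $$x_i^*x_j=\lambda_\Sigma(g_i)^*\,a_i^*a_j\,\lambda_\Sigma(g_j)\,,$$ and one checks, using $\lambda_\Sigma(g_i)^* = \ell_\Sigma(\sigma(g_i,g_i^{-1})^*)\,\lambda_\Sigma(g_i^{-1})$-type identities together with the covariance relation and the cocycle identities (Lemma \ref{cocy} being the relevant bookkeeping tool), that $x_i^*x_j$ equals $\lambda_\Sigma(g_i)\,b_{ij}\,\lambda_\Sigma(g_i^{-1}g_j)$ up to a right multiplication by a unitary that accounts for the trailing $\sigma(g_i,g_i^{-1}g_j)$ and the $\lambda_\Sigma(g_i^{-1}g_j)^*\lambda_\Sigma(g_i)^*$ factors. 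Once that bookkeeping is done, $[T_{ij}] = \big[E\big(M(x_i^*x_j)\big)\big]$ (possibly after conjugating the whole matrix by the diagonal unitary $\mathrm{diag}(\sigma(g_i,g_i^{-1})^{(*)})$, which does not affect positivity). Since $[x_i^*x_j]$ is positive in $M_n(C_r^*(\Sigma))$, complete positivity of $M$ gives $[M(x_i^*x_j)]\geq 0$ in $M_n(C_r^*(\Sigma))$, and then applying $E$ (which is positive, indeed completely positive, being a conditional expectation) entrywise yields $[E(M(x_i^*x_j))]\geq 0$ in $M_n(A)$. Hence $[T_{ij}]\geq 0$, as desired.

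Alternatively—and this may be the cleaner route to actually write down—one can combine Theorem \ref{conv-red-mult} with Example \ref{EQRPD}: if $M=M_T$ for some reduced multiplier $T$, Theorem \ref{conv-red-mult} realizes $M_T$ via an equivariant representation $(\rho,v)$ and a vector $x$ with $T=T_{\rho,v,x,x}$, and then Example \ref{EQRPD} gives that $T$ is $\Sigma$-positive definite. But $M$ here is an arbitrary completely positive map, not assumed a multiplier, and $T_M$ need not be a multiplier, so this shortcut does not apply directly; one really does need the direct matrix-positivity argument. It is, however, worth noting that the construction $M\mapsto T_M$ is precisely the one in the proof of Theorem \ref{conv-red-mult} (there $M=\widetilde{M_T}$), so the computation $\langle x, \rho(a)v(g)x\rangle_A = T_g(a)$ carried out there is essentially the $n=1$ case of what is needed.

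The main obstacle I anticipate is purely the cocycle bookkeeping in identifying $x_i^*x_j$ with the expression appearing inside $E(M(\cdot))$: getting the $\sigma$'s and the $\alpha_{g_i}^{-1}$ to line up correctly, and making sure the leftover unitary factors assemble into a diagonal conjugation (which preserves positivity) rather than something that genuinely obstructs positivity. The equivariance of $E$ and the identity $E(\lambda_\Sigma(g)x\lambda_\Sigma(g)^*)=\alpha_g(E(x))$ are the two facts that make the reduction to complete positivity of $M$ go through cleanly once the algebra is sorted out; everything else is routine.
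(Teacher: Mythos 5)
Your proposal follows essentially the same route as the paper's proof: substitute the definition of $T_M$, use the equivariance of $E$ to rewrite $T_{ij}$ as $E\big(\lambda_\Sigma(g_i)\,M(z_i^*z_j)\,\lambda_\Sigma(g_j)^*\big)$ with $z_k=a_k\lambda_\Sigma(g_k)$, then apply complete positivity of $M$, conjugation by ${\rm diag}\big(\lambda_\Sigma(g_1),\dots,\lambda_\Sigma(g_n)\big)$, and complete positivity of $E$. The cocycle bookkeeping you flag does work out exactly as you anticipate: $\lambda_\Sigma(g_i^{-1}g_j)^*\lambda_\Sigma(g_i)^*=\lambda_\Sigma(g_j)^*\sigma(g_i,g_i^{-1}g_j)^*$, so the inner $\sigma(g_i,g_i^{-1}g_j)^*$ cancels the trailing $\sigma(g_i,g_i^{-1}g_j)$ outright (no extra diagonal conjugation by $\sigma$'s is needed), leaving precisely the matrix $\big[E\big(\lambda_\Sigma(g_i)M(z_i^*z_j)\lambda_\Sigma(g_j)^*\big)\big]$.
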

 
 \begin{proof}
 Let $n\in \Naturali$,\, $g_1, \ldots, g_n \in G$,\, $a_1, \ldots, a_n \in A$ and set 
 $$T_{ij} = \alpha_{g_i}\Big(T_M\big( g_i^{-1}g_j\,, \, \alpha_{g_i}^{-1}\big(a_i^*\,a_j\,\sigma(g_i, g_i^{-1}g_j)^*\big)\big)\Big)\, \sigma(g_i, g_i^{-1}g_j)\,.$$
 Using the definition of $T_M$ and the properties of $E$, we get
 $$T_{ij} = E\Big(\lambda_\Sigma(g_i) \, M\Big(\alpha_{g_i}^{-1}\big(a_i^*\,a_j\,\sigma(g_i, g_i^{-1}g_j)^*\big) \lambda_\Sigma(g_i^{-1}g_j)\Big)\,\lambda_\Sigma(g_i^{-1}g_j)^*\lambda_\Sigma(g_i)^*\Big)\, \sigma(g_i, g_i^{-1}g_j)\,$$
 $$= E\Big(\lambda_\Sigma(g_i) \, M\Big(\alpha_{g_i}^{-1}\big(a_i^*\,a_j\,\sigma(g_i, g_i^{-1}g_j)^*\big) \lambda_\Sigma(g_i^{-1}g_j)\Big)\,\lambda_\Sigma(g_j)^*\sigma(g_i, g_i^{-1}g_j)^*\Big)\, \sigma(g_i, g_i^{-1}g_j)\,$$
$$= E\Big(\lambda_\Sigma(g_i) \, M\Big(\alpha_{g_i}^{-1}\big(a_i^*\,a_j\,\sigma(g_i, g_i^{-1}g_j)^*\big) \lambda_\Sigma(g_i^{-1}g_j)\Big)\,\lambda_\Sigma(g_j)^*\Big)\,,$$
that is, $T_{ij} =  E\Big(\lambda_\Sigma(g_i) \, M( A_{ij})\, \lambda_\Sigma(g_j)^*\Big)\,,$ where 
 \begin{align*}
  A_{ij} &= \alpha_{g_i}^{-1}\big(a_i^*\,a_j\,\sigma(g_i, g_i^{-1}g_j)^*\big) \lambda_\Sigma(g_i^{-1}g_j)\\
  &= \lambda_\Sigma(g_i)^*a_i^*\,a_j\,\sigma(g_i, g_i^{-1}g_j)^*\, \lambda_\Sigma(g_i)\, \lambda_\Sigma(g_i^{-1}g_j)\, \\
 &= \lambda_\Sigma(g_i)^*a_i^*\,a_j\,\lambda_\Sigma(g_j)\,.
\end{align*}
Hence, $[A_{ij}] = [z_i^*\, z_j]$\,, where $z_i = a_i\,\lambda_\Sigma(g_i) \in C_{\rm r}^*(\Sigma)$. So $[A_{ij}]$ is positive in $M_n(C^*_{\rm r}(\Sigma))$. As $M$ is completely positive, this implies that
$B=[ M(A_{ij})]$ is also positive in $M_n(C^*_{\rm r}(\Sigma))$. Then, setting $C = {\rm diag}\big(\lambda_\Sigma(g_1), \ldots, \lambda_\Sigma(g_1)\big)$, we get that
$$\Big[\lambda_\Sigma(g_i) \, M\big( A_{ij}\big)\, \lambda_\Sigma(g_j)^*\Big]= C\,  B\, C^*$$ 
 is positive in $M_n(C^*_{\rm r}(\Sigma))$. Finally, as $E$ is completely positive, we get that 
$$[\,T_{ij}\,] =  \Big[\,E\big(\lambda_\Sigma(g_i) \, M\big( A_{ij}\big)\, \lambda_\Sigma(g_j)^*\big)\,\Big]\,$$
 is positive in $M_n(C^*_{\rm r}(\Sigma))$, as desired.
\end{proof}

 \begin{corollary} \label{M-cpCor} Assume that $T$ is a (reduced) multiplier of $\Sigma$ such that  $M_T$ is completely positive. Then $T$ is  positive definite $($w.r.t.\ $\Sigma$$)$.
 \end{corollary}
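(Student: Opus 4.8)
The plan is to deduce Corollary \ref{M-cpCor} directly from Proposition \ref{M-cp} together with the definition of a (reduced) multiplier. The key observation is that if $T$ is a (reduced) multiplier of $\Sigma$, then the function $T$ is recovered from $M_T$ via the formula $T_M$ in Proposition \ref{M-cp}, i.e.\ that $T(g,a) = E\big(M_T(a\,\lambda_\Sigma(g))\,\lambda_\Sigma(g)^*\big)$ for all $g\in G$ and $a\in A$. Once this identity is established, the corollary is immediate: since $M_T$ is completely positive, Proposition \ref{M-cp} applied with $M=M_T$ tells us that $T_{M_T}$ is $\Sigma$-positive definite, and $T=T_{M_T}$.

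So the main (and essentially only) step is to verify the identity $T = T_{M_T}$. First I would recall that, by definition of a multiplier, $M_T\big(\Lambda_\Sigma(f)\big) = \Lambda_\Sigma(T\cdot f)$ for all $f\in C_c(\Sigma)$, where $(T\cdot f)(h) = T_h(f(h))$. Now fix $g\in G$ and $a\in A$, and apply this with $f = a\odot g \in C_c(\Sigma)$, so that $\Lambda_\Sigma(f) = a\,\lambda_\Sigma(g)$ after the identification of $A$ with $\ell_\Sigma(A)$. Then $T\cdot f = T_g(a)\odot g$, hence $M_T(a\,\lambda_\Sigma(g)) = \Lambda_\Sigma(T_g(a)\odot g) = T_g(a)\,\lambda_\Sigma(g)$. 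Multiplying on the right by $\lambda_\Sigma(g)^*$ and using that $\lambda_\Sigma(g)$ is unitary gives $M_T(a\,\lambda_\Sigma(g))\,\lambda_\Sigma(g)^* = T_g(a)\in A$, and applying $E$ (which restricts to the identity on $A$) yields $E\big(M_T(a\,\lambda_\Sigma(g))\,\lambda_\Sigma(g)^*\big) = T_g(a) = T(g,a)$. This is exactly the statement that $T = T_{M_T}$, with $M=M_T$ in the notation of Proposition \ref{M-cp}.

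The remaining step is purely formal: by hypothesis $M_T$ is completely positive, so Proposition \ref{M-cp} gives that $T_{M_T}$ is positive definite with respect to $\Sigma$; since $T = T_{M_T}$, we conclude that $T$ is positive definite with respect to $\Sigma$, as desired. I do not anticipate any real obstacle here — the only point requiring a little care is the bookkeeping around the identification of $A$ inside $C_{\rm r}^*(\Sigma)$ via $\ell_\Sigma$ and the fact that $\Lambda_\Sigma(a\odot g) = a\,\lambda_\Sigma(g)$, which is recorded in the preliminaries.

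\begin{proof}
Let $M = M_T$. By assumption, $M$ is a completely positive linear map from $C_{\rm r}^*(\Sigma)$ into itself, so Proposition \ref{M-cp} applies and gives that $T_M$ is positive definite with respect to $\Sigma$. It therefore suffices to check that $T = T_M$. Let $g\in G$ and $a\in A$, and consider $f = a\odot g \in C_c(\Sigma)$, so that $\Lambda_\Sigma(f) = a\,\lambda_\Sigma(g)$. Since $(T\cdot f)(h) = T_h(f(h))$, we have $T\cdot f = T_g(a)\odot g$, and hence
$$M_T\big(a\,\lambda_\Sigma(g)\big) = M_T\big(\Lambda_\Sigma(f)\big) = \Lambda_\Sigma(T\cdot f) = T_g(a)\,\lambda_\Sigma(g)\,.$$
Multiplying on the right by $\lambda_\Sigma(g)^*$, which is the inverse of the unitary $\lambda_\Sigma(g)$, we obtain $M_T\big(a\,\lambda_\Sigma(g)\big)\,\lambda_\Sigma(g)^* = T_g(a) \in A$. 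Applying $E$, which restricts to the identity on $A$, we get
$$T_M(g, a) = E\big(M_T(a\,\lambda_\Sigma(g))\, \lambda_\Sigma(g)^*\big) = T_g(a) = T(g,a)\,.$$
Thus $T = T_M$, and consequently $T$ is positive definite with respect to $\Sigma$.
\end{proof}
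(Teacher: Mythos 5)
Your proof is correct and follows exactly the paper's argument: both reduce the corollary to the identity $T = T_{M_T}$ (verified by evaluating the multiplier relation on $f = a\odot g$) and then invoke Proposition \ref{M-cp}. Your write-up simply spells out in more detail the bookkeeping step $E\big(M_T(a\,\lambda_\Sigma(g))\,\lambda_\Sigma(g)^*\big) = T_g(a)$ that the paper compresses into one line.
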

 
 \begin{proof} For all $g\in G$ and $a \in A$, we have
$$T_{M_T}(g,a) = E\big(M_T(a\lambda_\Sigma(g))\lambda_\Sigma(g)^*\big) = E(T_g(a)) =T_g(a)\,.$$
Hence, using Proposition \ref{M-cp}, we get that $T = T_{M_T}$ is positive definite $($w.r.t.\ $\Sigma$$)$.
\end{proof}

We also have:

\begin{proposition} \label{phi-cp} Assume $\Phi:C^*(\Sigma) \to C^*(\Sigma)$ is a completely positive linear map. Define $T_\Phi \in L(\Sigma)$ by 
 $$T_\Phi(g, a) = (E\circ\Lambda_\Sigma)\big(\,\Phi\big(i_A(a)i_G(g)\big)\, i_G(g)^*\big)$$
 for $g \in G$ and $a \in A$. Then $T_\Phi$ is positive definite $($w.r.t.\ $\Sigma$$)$.
 \end{proposition}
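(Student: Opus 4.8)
The plan is to follow the same strategy as in the proof of Proposition~\ref{M-cp}, transferred from $C_{\rm r}^*(\Sigma)$ to $C^*(\Sigma)$. The starting observation is that $\Psi := \Lambda_\Sigma \circ \Phi : C^*(\Sigma) \to C_{\rm r}^*(\Sigma)$ is completely positive, being the composition of the $*$-homomorphism $\Lambda_\Sigma$ with the completely positive map $\Phi$. Since $\Lambda_\Sigma$ is multiplicative with $\Lambda_\Sigma(i_A(a)) = a$ and $\Lambda_\Sigma(i_G(g)) = \lambda_\Sigma(g)$, the defining formula for $T_\Phi$ rewrites as
\[ T_\Phi(g,a) = E\big(\Psi(i_A(a)\,i_G(g))\,\lambda_\Sigma(g)^*\big), \qquad g\in G,\ a\in A. \]

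Next I would fix $n\in\Naturali$, $g_1,\dots,g_n\in G$ and $a_1,\dots,a_n\in A$, write $h_{ij} = g_i^{-1}g_j$ and $s_{ij} = \sigma(g_i,h_{ij})$, and consider
\[ T_{ij} = \alpha_{g_i}\Big(T_\Phi\big(h_{ij},\ \alpha_{g_i}^{-1}(a_i^* a_j\, s_{ij}^*)\big)\Big)\, s_{ij}. \]
Inserting the formula above and using the equivariance of $E$ to absorb $\alpha_{g_i}$ into a conjugation by $\lambda_\Sigma(g_i)$, then the cocycle relation for $\lambda_\Sigma$ to rewrite $\lambda_\Sigma(h_{ij})^*\lambda_\Sigma(g_i)^* = \lambda_\Sigma(g_j)^* s_{ij}^*$, and finally the right $A$-linearity of $E$ together with $s_{ij}^* s_{ij} = 1$, one collapses this to
\[ T_{ij} = E\big(\lambda_\Sigma(g_i)\,\Psi(b_{ij})\,\lambda_\Sigma(g_j)^*\big), \qquad b_{ij} := i_A\big(\alpha_{g_i}^{-1}(a_i^* a_j\, s_{ij}^*)\big)\, i_G(h_{ij}) \in C^*(\Sigma). \]

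The crucial algebraic point is then that $b_{ij} = z_i^* z_j$, where $z_i := i_A(a_i)\, i_G(g_i)$: from $g_i h_{ij} = g_j$ and the covariant homomorphism relations one gets $i_G(g_j) = i_A(s_{ij}^*)\, i_G(g_i)\, i_G(h_{ij})$, whence $z_i^* z_j = i_G(g_i)^*\, i_A(a_i^* a_j\, s_{ij}^*)\, i_G(g_i)\, i_G(h_{ij}) = i_A\big(\alpha_{g_i}^{-1}(a_i^* a_j\, s_{ij}^*)\big)\, i_G(h_{ij}) = b_{ij}$ by covariance. Hence $[b_{ij}] = [z_i^* z_j]$ is positive in $M_n(C^*(\Sigma))$; applying the completely positive map $\Psi$ gives $[\Psi(b_{ij})] \geq 0$ in $M_n(C_{\rm r}^*(\Sigma))$; conjugating by ${\rm diag}(\lambda_\Sigma(g_1),\dots,\lambda_\Sigma(g_n))$ preserves positivity; and a last application of the completely positive map $E$ yields that $[T_{ij}]$ is positive in $M_n(A)$, i.e.\ $T_\Phi$ is $\Sigma$-positive definite.

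I expect the only delicate bookkeeping to be the chain of manipulations reducing $\alpha_{g_i}\big(T_\Phi(h_{ij},\cdot)\big)\,s_{ij}$ to $E\big(\lambda_\Sigma(g_i)\Psi(b_{ij})\lambda_\Sigma(g_j)^*\big)$, since it interleaves the equivariance of $E$, the cocycle identity for $\lambda_\Sigma$, and the unitarity of $\sigma(g_i,h_{ij})$; but each step is routine (compare the computation of $A_{ij}$ in the proof of Proposition~\ref{M-cp}), so no genuine obstacle is anticipated.
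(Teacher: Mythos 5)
Your proposal is correct and is exactly the adaptation the paper has in mind: the authors skip the proof of Proposition \ref{phi-cp}, stating only that it is similar to that of Proposition \ref{M-cp}, and your argument (composing with $\Lambda_\Sigma$ to get the completely positive map $\Psi$ into $C_{\rm r}^*(\Sigma)$, identifying $b_{ij}=z_i^*z_j$ via the covariance relations for $(i_A,i_G)$, and then pushing positivity through $\Psi$, the diagonal conjugation, and $E$) is precisely that adaptation. All the individual computations check out.
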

 
The proof of Proposition \ref{phi-cp} is similar to the proof of Proposition \ref{M-cp}, so we skip it.

\begin{corollary} \label{phi-cpCor} Assume that $T$ is a full multiplier of $\Sigma$ such that $\Phi_T$ is completely positive. Then $T$ is  positive definite $($w.r.t.\ $\Sigma$$)$.
 \end{corollary}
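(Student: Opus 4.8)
The plan is to imitate the proof of Corollary~\ref{M-cpCor}, replacing the reduced picture by the full one: I will show that the hypothesis forces $T = T_{\Phi_T}$, where $T_{\Phi_T}$ is the function attached to the completely positive map $\Phi_T$ in Proposition~\ref{phi-cp}. Once this identity is in hand, that proposition (applied to $\Phi = \Phi_T$) immediately gives that $T$ is positive definite with respect to $\Sigma$.

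To establish $T = T_{\Phi_T}$, fix $g\in G$ and $a\in A$. First I would note that $i_A(a)\,i_G(g) = a\odot g$ inside $C_c(\Sigma)\subset C^*(\Sigma)$; this is just $i_A\times i_G = {\rm id}_{C^*(\Sigma)}$ evaluated on the function $a\odot g$ (alternatively, a one-term computation with the convolution product). Since $a\odot g$ belongs to $C_c(\Sigma)$, the defining property of the full multiplier $\Phi_T$ applies to it and yields $\Phi_T\big(i_A(a)\,i_G(g)\big) = \Phi_T(a\odot g) = T\cdot(a\odot g) = T_g(a)\odot g$, where we used that $T_h(0)=0$ for $h\neq g$ by linearity of $T$ in the second variable.

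Next I would transport this through $\Lambda_\Sigma$. Since $\Lambda_\Sigma$ is a homomorphism with $\Lambda_\Sigma(i_G(g)) = \lambda_\Sigma(g)$ and $\Lambda_\Sigma(c\odot h) = c\,\lambda_\Sigma(h)$ for $c\in A$, $h\in G$, and since $\lambda_\Sigma(g)$ is unitary, we get $\Lambda_\Sigma\big(\Phi_T(i_A(a)\,i_G(g))\,i_G(g)^*\big) = T_g(a)\,\lambda_\Sigma(g)\,\lambda_\Sigma(g)^* = T_g(a)$. Applying $E$, which restricts to the identity on $A$, then gives $T_{\Phi_T}(g,a) = (E\circ\Lambda_\Sigma)\big(\Phi_T(i_A(a)\,i_G(g))\,i_G(g)^*\big) = E\big(T_g(a)\big) = T_g(a) = T(g,a)$. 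Hence $T = T_{\Phi_T}$, and Proposition~\ref{phi-cp} concludes the proof.

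I do not expect a genuine obstacle here: the argument is purely bookkeeping in the full crossed product. The only points requiring a little care are that the defining relation for $\Phi_T$ must be invoked on the element $a\odot g\in C_c(\Sigma)$ rather than on a general element of $C^*(\Sigma)$, and that one should keep track of the cocycle when relating $i_G(g)^*$ to $\lambda_\Sigma(g)^*$ — although the twist disappears in the end because $\lambda_\Sigma(g)\lambda_\Sigma(g)^* = 1$. (If one preferred not to reuse Proposition~\ref{phi-cp}, one could instead combine Theorem~\ref{conv-full-mult} with Example~\ref{EQRPD}, but the route via Proposition~\ref{phi-cp} is shorter.)
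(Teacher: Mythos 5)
Your proposal is correct and follows essentially the same route as the paper: both proofs verify the identity $T = T_{\Phi_T}$ by evaluating $\Phi_T$ on $a\odot g = i_A(a)i_G(g) \in C_c(\Sigma)$, simplifying via the unitarity of $i_G(g)$ (equivalently of $\lambda_\Sigma(g)$), and then invoking Proposition~\ref{phi-cp}. The extra bookkeeping you supply (and the observation that the cocycle twist cancels) is exactly what the paper leaves implicit.
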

 
 \begin{proof} For all $g\in G$ and $a \in A$, we have
$$T_{\Phi_T}(g,a) = (E\circ\Lambda_\Sigma)\big(\Phi_T(i_A(a)i_G(g))\,i_G(g)^*\big) = 
E\Big(\Lambda_\Sigma\big(i_A(T_g(a))\big)\Big) = E(T_g(a)) =T_g(a)\,.$$
Hence, using Proposition \ref{phi-cp}, we get that $T = T_{\Phi_T}$ is positive definite (w.r.t.\ $\Sigma$).
\end{proof}

Here is a Gelfand-Raikov type theorem, showing that the converse of Example \ref{EQRPD} holds. 

\begin{theorem} \label{GR} Let $T\in L(\Sigma)$ be  positive definite $($w.r.t.\ $\Sigma$$)$. 
    
   \smallskip  Then there exist  an equivariant representation $(\rho,v)$ of $\Sigma$ on a Hilbert $A$-module $X$ and $x \in X$ such that   
$T=T_{\rho,v, x, x}$.

 \smallskip The vector $x \in X$ may  be chosen to be cyclic for $(\rho, v)$, that is, in such a way that $$\text{{\rm Span}}\big\{ \big(\rho(a)\, v(g) \,x\big)\cdot b\mid a, \,b\in A\, ,\, g\in G\,\big\}$$ is dense in $X$. 
 If we then also have $T=T_{\rho',v', x', x'}$ for some  equivariant representation $(\rho',v')$ of $\Sigma$ on a Hilbert $A$-module $X'$ and some $x' \in X'$ which is cyclic for $(\rho', v')$, then the triple $(\rho', v', x')$ is unitary equivalent to the triple $(\rho, v, x)$, in the sense that there exists a unitary $u \in \L(X, X')$ such that $$\rho'(a) = u\, \rho(a)\, u^*\, \, , \, v'(g) = u\, v(g) \, u^*\, \, , \text{ and }\,\, u\, x = x'$$ for all $a\in A$ and $g\in G$.      
    \end{theorem}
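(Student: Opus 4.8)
The plan is to construct $(\rho,v)$ and $x$ by a GNS-type construction performed directly on the data of $T$, mimicking the KSGNS argument used in the proof of Theorem \ref{conv-red-mult} but with $T$ itself playing the role of the completely positive map. First I would form the right $A$-module $C_c(\Sigma)\odot A$ — or equivalently $C_c(G,A)$ thought of with generators $(a\odot g)$ — and equip it with the $A$-valued sesquilinear form dictated by the positive-definiteness matrix: on generators,
\begin{equation*}
\big\langle a_i\odot g_i\,,\, a_j\odot g_j\big\rangle := \alpha_{g_i}\Big(T_{g_i^{-1}g_j}\big(\alpha_{g_i}^{-1}(a_i^*a_j\,\sigma(g_i,g_i^{-1}g_j)^*)\big)\Big)\,\sigma(g_i,g_i^{-1}g_j).
\end{equation*}
The hypothesis that $T$ is $\Sigma$-positive definite is exactly the statement that every finite Gram matrix built this way is positive in $M_n(A)$, so this form is positive semidefinite; quotient by the null space $N=\{z:\langle z,z\rangle=0\}$ and complete to get a Hilbert $A$-module $X$. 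Write $a\odot g$ also for its image in $X$.

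Next I would define the structure maps. For the right action, one wants $(a\odot g)\cdot b$; the natural guess (reading off the case $A=\Complessi$ and the trivial representation) is $(a\odot g)\cdot b = \big(a\,\alpha_g(b)\big)\odot g$, and one must check this is compatible with the inner product. For $\rho$, set $\rho(c)(a\odot g) = (ca)\odot g$ and verify it is adjointable with $\rho(c)^*=\rho(c^*)$, so $\rho:A\to\L(X)$ is a representation. For $v$, set $v(h)(a\odot g) = \big(\alpha_h(a)\sigma(h,g)\big)\odot hg$; one then checks properties (i)--(iv) of an equivariant representation. Properties (i) and (iv) are essentially formal once the formulas are in place. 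Property (iii), that $\alpha_h$ intertwines the inner products, is where the cocycle identities get used in earnest — this is essentially a repackaging of Lemma \ref{cocy} together with the $\Sigma$-positive-definiteness being insensitive to the ``shift by $h$'' in the way the matrix is indexed (i.e. the Gram matrix for $(h g_i)$ applied to $\alpha_h$-translated $a_i$ equals $\alpha_h$ applied to the Gram matrix for $(g_i)$). Property (ii) is then an algebraic consequence. Finally put $x = 1\odot e$, and compute
\begin{equation*}
\big\langle x\,,\,\rho(a)v(g)x\big\rangle = \big\langle 1\odot e\,,\, a\,\sigma(g,e)\odot g\big\rangle = \big\langle 1\odot e\,,\, a\odot g\big\rangle = T_g(a),
\end{equation*}
using $\sigma(g,e)=1$ and $\alpha_e={\rm id}$, so that $T=T_{\rho,v,x,x}$. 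Cyclicity of $x$ is immediate since $\rho(a)v(g)x = a\odot g$ and these generate $C_c(\Sigma)\odot A$, hence a dense subspace; multiplying on the right by $b$ only enlarges the span.

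For uniqueness, suppose $T = T_{\rho',v',x',x'}$ with $x'$ cyclic for $(\rho',v')$. I would define $u_0$ on the dense submodule $\mathrm{Span}\{\rho(a)v(g)x\cdot b\}$ of $X$ by $u_0\big(\rho(a)v(g)x\cdot b\big) = \rho'(a)v'(g)x'\cdot b$. Because both $\langle\,\cdot\,,\,\cdot\,\rangle$ on such vectors are computed by the same formula in terms of $T$ (this is precisely the content of the first display in Example \ref{EQRPD}, which expresses $\langle\rho(a)v(g)x,\rho(b)v(h)x\rangle$ purely through $T$), the map $u_0$ is inner-product preserving, hence well-defined and isometric; cyclicity of $x$ gives dense domain and cyclicity of $x'$ gives dense range, so $u_0$ extends to a unitary $u\in\L(X,X')$. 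The intertwining relations $u\rho(a)=\rho'(a)u$, $uv(g)=v'(g)u$, and $ux=x'$ hold on the dense submodule by construction and extend by continuity.

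The main obstacle I anticipate is verifying property (iii) for $v$ — equivalently, checking that the inner-product form is well-defined and $\alpha$-equivariant under the definition of $v(h)$. This requires carefully applying the two cocycle identities (in the form of Lemma \ref{cocy}) to reindex the defining matrix, and showing that positivity of the Gram matrix for index set $\{g_1,\dots,g_n\}$ transfers to positivity of the reindexed matrix for $\{hg_1,\dots,hg_n\}$; getting the conjugations by $\sigma$ and the placement of $\alpha_h$ exactly right is the delicate bookkeeping. A secondary subtlety is confirming that $v(h)$ descends to the quotient $X$ and is bounded with bounded inverse $v(h^{-1})$ up to the ${\rm ad}_\rho(\sigma)$ correction, i.e. that it lands in $\I(X)$ rather than merely being a densely-defined linear map; this follows once (iii) is in hand, since (iii) forces $v(h)$ to be isometric.
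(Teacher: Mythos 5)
Your overall strategy — a Kolmogorov/GNS-type construction from the positive definite kernel, followed by a cyclic-vector uniqueness argument — is exactly the paper's, and your uniqueness paragraph is essentially the paper's argument verbatim. But there is a genuine gap in the construction of the Hilbert $A$-module. You propose to build the module on $C_c(G,A)$ with generators $a\odot g$, inner product $\langle a\odot g, a'\odot g'\rangle = [(g,a),(g',a')]_T$, and right action $(a\odot g)\cdot b = (a\,\alpha_g(b))\odot g$. This right action is \emph{not} compatible with the inner product: the axiom $\langle \xi,\eta\cdot c\rangle = \langle\xi,\eta\rangle\,c$ would require, already for $g=g'=e$ and $a=1$, that $T_e(a'c) = T_e(a')\,c$ for all $a',c\in A$, i.e.\ that the completely positive map $T_e$ be a right $A$-module map. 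This fails for, say, $G=\{e\}$ and $T_e(a)=\mathrm{tr}(a)1$ on $M_2(\Complessi)$. (Your formula amounts to identifying $(\rho(a)v(g)x)\cdot b$ with $\rho(a\alpha_g(b))v(g)x = \rho(a)v(g)(x\cdot b)$, which holds only when the cyclic vector is central — i.e.\ essentially only for $T=L^\varphi$ as in Proposition \ref{L-phi}.) So the quotient-and-complete step does not produce a Hilbert $A$-module.

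The fix is the version you mentioned first and then discarded as ``equivalent'': one must keep an extra free right $A$-coefficient outside the kernel. The paper takes $X_0=C_c(G\times A,A)$, with generators $\delta_{(g,a)}\cdot b$, right action acting only on the outer coefficient $b$, and $\langle F,F'\rangle_T=\sum F(g,a)^*[(g,a),(h,b)]_T F'(h,b)$; then compatibility is manifest. This changes the formulas downstream: for instance the correct $v_0(g)$ must also twist the outer coefficient, $v_0(g)(\delta_{(h,a)}\cdot b)=\delta_{(gh,\,\alpha_g(a)\sigma(g,h))}\cdot(\sigma(g,h)^*\alpha_g(b))$, a $\sigma(g,h)^*$ that your one-variable picture cannot express and which is needed to verify property (ii) with the ${\rm ad}_\rho(\sigma(g,h))$ correction and property (iii) via Lemma \ref{cocy}. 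With the module set up this way, the rest of your outline (the formulas for $\rho$, the choice $x=\delta_{(e,1)}$, the computation $\langle x,\rho(a)v(g)x\rangle=T_g(a)$, cyclicity, and uniqueness) goes through as you describe.
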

    
     \begin{proof}  
For $g, h\in G$ and $a,b \in A$, we define $\big[(g,a), (h,b)\big]_T \in A$ by
    $$ \big[(g,a), (h,b)\big]_T=  \alpha_{g}\Big(T_{g^{-1}h} \big(\alpha_{g}^{-1}\big(a^*\,b\,\sigma(g, g^{-1}h)^*\big)\,\big)\Big)\, \sigma(g, g^{-1}h)\,.$$
    The assumption that $T$ is positive definite (w.r.t.\ $\Sigma$) gives that the $A$-valued map $$\big((g,a), (h,b)\big) \to \big[(g,a), (h,b)\big]_T$$  is  a positive definite kernel on $G\times A$ in the sense of \cite{BBLS}, so it has a Kolgomorov decomposition on a certain inner product $A$-module (see \cite[Section 3]{BBLS}). As this module plays an important r{\^o}le in our proof, we provide the details of
its construction in our situation.   

We first note that 
\begin{equation}\label{eq-conjsym}
 \big(\big[(g,a), (h,b)\big]_T\big)^* =  \big[(h,b), (g,a)\big]_T\,.
 \end{equation}
 Indeed,  choosing $g_1=g, \, g_2=h, \, a_1=a$ and $a_2 = b$ in the definition of the positive definiteness of $T$ gives a positive matrix in $M_2(A)$ whose  off-diagonal terms are $\big[(g,a), (h,b)\big]_T$ and $\big[(h,b), (g,a)\big]_T$\,. 

\smallskip Moreover, we observe that
\begin{equation}\label{eq-adj}
\big[(g,ac), (h,b)\big]_T\big] = 
 \alpha_{g}\Big(T_{g^{-1}h} \big(\alpha_{g}^{-1}\big(c^*a^*\,b\,\sigma(g, g^{-1}h)^*\big)\,\big)\Big)\, \sigma(g, g^{-1}h)\,
 = \big[(g,c), (h,a^*b)\big]_T\,.
 \end{equation}
 
  \smallskip 
  Now, we set $X_0 = C_c(G\times A\,, A)$ 
  where the first copy of $A$ carries the discrete topology
  and consider $X_0$ as an $A$-module w.r.t.\  the right action of $A$ on $X_0$ given by $$(F\cdot c)(h,b) = F(h,b)\,c$$ for $F\in X_0, \, h \in G$ and $ b, \,c \in A$.  For $F, F' \in X_0$, we define $\big\langle \,F, F'\,\big\rangle_T \in A$ by 
  $$ \big\langle \,F,F'\,\big\rangle_T \, = \sum_{(g,a) , \,(h,b) \,\in\, G\times A} \, F(g,a)^*\, \big[(g,a), (h,b)\big]_T\, F'(h,b)\,.$$
  Clearly, the map $(F, F')\to \big\langle \,F,F'\,\big\rangle_T$ is linear in the second variable.
  Further, using (\ref{eq-conjsym}), we get 
  $$ \big(\big\langle \,F,F'\,\big\rangle_T\big)^* = \sum_{(g,a) , \,(h,b) \,\in\, G\times A} \, F'(h,b)^*\, \big(\big[(g,a), (h,b)\big]_T\big)^*\, F(g,a)\,$$
  $$= \sum_{(g,a) , \,(h,b) \,\in\, G\times A} \, F'(h,b)^*\, \big[(h,b), (g,a)\big]_T\, F(g,a) = \big\langle \,F',F\,\big\rangle_T\,. $$
  Moreover, for each $c \in A$, we have
$$ \big\langle \,F,F'\cdot c\,\big\rangle_T = \sum_{(g,a) , \,(h,b) \,\in\, G\times A} \, F(g,a)^*\, \big[(g,a), (h,b)\big]_T\, F'(h,b)\, c = \big\langle \,F,F'\,\big\rangle_T \ c\,.$$
  When $g\in G$ and $a\in A$,  we will let $\delta_{(g,a)} \in X_0$ denote the function that takes the value 1 at $(g,a)$ and is 0 otherwise. Consider $0\neq F \in X_0$, with supp$(F)=\{(g_1, a_1), \ldots, (g_n,a_n)\}$ (without repetition) and $F(g_i, a_i) = b_i$ for $i=1, \ldots, n$.  
  We then have $$F = \sum_{i=1}^n \, \delta_{(g_i\,,\, a_i)}\cdot b_i\,.$$ 
  Somewhat pedantically, we will say that this is the {\it standard decomposition} of $F$.
    If $F=0$, we just take $0=\delta_{(e,1)} \cdot 0$ as its standard decomposition. Clearly, we have
$$\big\langle \,F,F\,\big\rangle_T = \sum_{i,j=1}^n b_i^* \, \big[(g_i,a_i)\, , \, (g_j, a_j)\big]_T\, b_j \,.$$
As $T$ is positive definite, the matrix 
$$\Big[ \,\big[(g_i,a_i)\, , \, (g_j, a_j)\big]_T \,\Big] = \Big[\alpha_{g_i}\Big(T_{g_i^{-1}g_j}\big(\alpha_{g_i}^{-1}\big(a_i^*\,a_j\,\sigma(g_i, g_i^{-1}g_j)^*\big)\,\big)\Big)\, \sigma(g_i, g_i^{-1}g_j)\Big]$$ is positive in $M_n(A)$. Hence,  it follows that $\big\langle \,F,F\,\big\rangle_T$ is a positive element of $A$\,.

\smallskip Thus, $X_0$ becomes a semi-inner-product $A$-module w.r.t.\ $ \big\langle \,\cdot, \cdot\,\big\rangle_T$\,.   
   Setting $$N = \big\{ F\in X_0\mid \big\langle \,F, F\,\big\rangle_T=0\big\}\,,$$ the quotient $X_0/N$ becomes an inner-product $A$-module w.r.t.\ $$ \big\langle \,F + N\,, F' + N\,\big\rangle:= \big\langle \,F,F'\,\big\rangle_T\,.$$
   We let $X$ denote the Hilbert $A$-module obtained by completing $X_0/N$.
  
   \smallskip 
   For each $a\in A$ and $F \in X_0$ with standard decomposition $F = \sum_{i=1}^n \, \delta_{(g_i\,,\, a_i)}\cdot b_i$\,,
  we define $\rho_0(a)\, F \in X_0$ by 
   $$\rho_0(a) \,F = \sum_{i=1}^n \, \delta_{(g_i\,,\, a\,a_i)}\cdot b_i\,.$$
   It is a  routine exercise to check that the map  $\rho_0(a):X_0\to X_0$ we obtain in this way for each $a \in A$ is linear (in fact $A$-linear). Moreover,  $\rho_0(a)$ is adjointable, with $\rho_0(a)^* = \rho_0(a^*)$. Indeed, for $F$ as above and $F' \in X_0$ with standard decomposition $F'= \sum_{j=1}^m \, \delta_{(g'_j\,,\, a'_j)}\cdot b'_j$\,, we use our previous observation (cf.\ equation (\ref{eq-adj})) to get
  $$\big\langle \rho_0(a) \,F\, ,\, F'\big\rangle_T = \sum_{i=1}^n\sum_{j=1}^{m} \, b_i^* \, \big[(g_i, aa_i), (g'_j, a'_j)\big]_T\, b'^*_j$$
  $$= \sum_{i=1}^n\sum_{j=1}^{m} \, b_i^* \, \big[(g_i, a_i), (g'_j, a^*a'_j)\big]_T\, b'^*_j= \big\langle F\, ,\, \rho_0(a^*) \,F'\big\rangle_T\,.$$
  In particular, we have $\big\langle \rho_0(a) \,F\, ,\,  \rho_0(a) \,F\big\rangle_T = \big\langle F\, ,\,  \rho_0(a^*) \,\rho_0(a) \,F\big\rangle_T $, and it then follows from the Cauchy-Schwarz inequality that $\rho_0(a)\, F$ lies in $N$ whenever $F\in N$. Hence, for each $a\in A$, we may define an adjointable linear map $\tilde\rho_0(a):X_0/N\to X_0/N$ by 
  $$\tilde\rho_0(a) (F+N) = (\rho_0(a)\,F) + N$$ 
  for each $F\in X_0$, which satisfies $\tilde\rho_0(a)^* = \tilde\rho_0(a^*)$.  One easily  checks that $\tilde\rho_0(1) = I$ and $\tilde\rho_0(aa') = \tilde\rho_0(a)\, \tilde\rho_0(a') $ for all $a, a' \in A$.
  
  \smallskip  We now show that $\tilde\rho_0(a)$ is bounded for each $a\in A$. Let $a\in A$ and choose $b \in A$ be such that $b^*b = \|a\|^2\cdot 1 - a^*a $\,. For $ \, F \in X_0$, let us write $\dot{F}= F+N$. Then we have
$$ \|a\|^2 \, \big\langle \dot{F}, \dot{F}\big \rangle- \big\langle \tilde\rho_0(a)\,\dot{F},\, \tilde\rho_0(a)\,\dot{F}\big\rangle \ = \big\langle \dot{F}, \,\tilde\rho_0\big(\|a\|^2\cdot 1 - a^*a\big)\, \dot{F}\big\rangle = \big\langle \tilde\rho_0(b)\,\dot{F}, \tilde\rho_0(b)\,\dot{F}\big\rangle \geq 0\,.
$$
So $\big\langle \tilde\rho_0(a)\,\dot{F}, \tilde\rho_0(a)\,\dot{F}\big\rangle \, \leq \, \|a\|^2 \,\big \langle \dot{F}, \dot{F}\big\rangle$, which implies that $$\|\tilde\rho_0(a)\, \dot{F}\|\,  \leq \, \|a\| \, \|\dot{F}\|\,.$$
Hence, $\tilde\rho_0(a)$ is bounded on $X_0/N$ for each $a\in A$ and it is straightforward to see that $\tilde\rho_0$ extends to a representation $\rho:A \to \L(X)$. 

\smallskip
Next, for $g\in G$ and $F \in X_0$ with standard decomposition $F = \sum_{i=1}^n \, \delta_{(g_i\,,\, a_i)}\cdot b_i\,$, we define $v_0(g)\, F \in X_0$ by 
    $$v_0(g)\, F = \sum_{i=1}^n \, \delta_{\big(g\,g_i\,,\, \alpha_g(a_i)\,\sigma(g,g_i)\big)}\cdot \big(\sigma(g, g_i)^*\,\alpha_g(b_i)\big)\,.$$
    It is routine to verify that the map $v_0(g): X_0\to X_0$ we obtain in this way is $\Complessi$-linear.   
    Moreover, for any $F\in X_0$, we have 
\begin{equation} \label{v0} \alpha_g\big(\langle F\,,\, F \rangle_T\big)  = \big\langle\, v_0(g)\,F,\, v_0(g)\,F\,\big\rangle_T 
\,. 
\end{equation}

\smallskip To prove this, using linearity, it suffices to check that 
\begin{equation} \label{v0-2} 
 \alpha_g\big(\,\big\langle \delta_{(h,a)}\cdot b\,,\, \delta_{(h',a')}\cdot b' \big\rangle_T\,\big)
=\big\langle\, v_0(g)\,\big(\delta_{(h,a)}\cdot b)\,,\, v_0(g)\,\big(\delta_{(h',a')}\cdot b'\big)\,\big\rangle_T 
 \end{equation}
for $g,h, h'\in G$ and $a, a', b , b' \in A$. We have

\medskip $ \alpha_g\big(\,\big\langle \delta_{(h,a)}\cdot b\,,\, \delta_{(h',a')}\cdot b' \big\rangle_T\,\big)
=  \alpha_g\big(\, b^*\,\big[ (h,a) \,,\, (h',a')\big]_T\,b'\,\big)$
$$= \alpha_g(b)^*\, \alpha_g\Big(\big((\alpha_h\, T_{h^{-1}h'}\, \alpha_h^{-1})(a^*a'\,\sigma(h, h^{-1}h')^*\big) \, \sigma(h, h^{-1}h')\Big)\, \alpha_g(b')$$ 
$$= \alpha_g(b)^*\, \sigma(g,h)\,\Big((\alpha_{gh}\, T_{h^{-1}h'}\, \alpha_h^{-1})\big(a^*a'\,\sigma(h, h^{-1}h')^*\big)\Big)\, \sigma(g,h)^* \, \alpha_g(\sigma(h, h^{-1}h'))\, \alpha_g(b')$$
$$= \alpha_g(b)^*\, \sigma(g,h)\,\Big((\alpha_{gh}\, T_{h^{-1}h'}\, \alpha_h^{-1})\big(a^*a'\,\sigma(h, h^{-1}h')^*\big)\Big)\, \sigma(gh, h^{-1}h') \, \sigma(g, h')^*\, \alpha_g(b')\,,$$
while

\medskip $\big\langle\, v_0(g)\,\big(\delta_{(h\,,\,a)}\cdot\, b)\,,\, v_0(g)\,\big(\delta_{(h'\,,\,a')}\cdot \,b'\big)\,\big\rangle_T $
$$= \big\langle\,\delta_{(gh,\,\alpha_g(a)\,\sigma(g,h))}\cdot\, \sigma(g,h)^*\alpha_g(b)\,,\,\delta_{(gh',\,\alpha_g(a')\,\sigma(g,h'))}\cdot\,\sigma(g,h')^*\, \alpha_g(b')\,\big\rangle_T $$
$$= \alpha_g(b)^*\, \sigma(g,h)\, \Big[\,\big(gh,\,\alpha_g(a)\,\sigma(g,h)\big)\,,\,\big(gh',\,\alpha_g(a')\,\sigma(g,h')\big)\Big]_T \, \sigma(g, h')^*\, \alpha_g(b')$$
$$= \alpha_g(b)^*\, \sigma(g,h)\, \Big(\big(\alpha_{gh}\,T_{h^{-1}h'}\,\alpha_{gh}^{-1}\big)\big(\sigma(g,h)^*\,\alpha_g(a^*a')\,\sigma(g, h')\, \sigma(gh, h^{-1}h')^*\big)\Big) \,\sigma(gh, h^{-1}h')\, \sigma(g, h')^*\, \alpha_g(b')\,.$$
Now, using Lemma \ref{cocy} at the last step, we have
\begin{align*}
\alpha_{gh}^{-1}\big(\sigma(g,h)^*\,\alpha_g(a^*a')\,\sigma(g, h')\, \sigma(gh, h^{-1}h')^*\big)&
= 
\big(\alpha_{h}^{-1}\alpha_{g}^{-1} {\rm Ad}(\sigma(g,h))\big)\big(\sigma(g,h)^*\,\alpha_g(a^*a')\,\sigma(g, h')\, \sigma(gh, h^{-1}h')^*\big)\\
&= \alpha_{h}^{-1}\big(\alpha_{g}^{-1} \big(\alpha_g(a^*a')\,\sigma(g, h')\, \sigma(gh, h^{-1}h')^*\sigma(g,h)^*\big)\\
&= \alpha_{h}^{-1}\big(a^*a'\,\alpha_{g}^{-1}(\sigma(g, h')\, \sigma(gh, h^{-1}h')^*\sigma(g,h)^*)\big)\\
&=\alpha_{h}^{-1}\big(a^*a'\, \sigma(h, h^{-1}h')^*\big)\,
\end{align*}
and we see that both sides of Equation (\ref{v0-2}) are equal, hence that Equation (\ref{v0}) holds.

\smallskip Equation (\ref{v0}) implies in particular that $v_0(g)\,F \in N$ whenever $F \in N$. 
We may therefore define  a linear map $\tilde{v}_0(g): X_0/N \to X_0/N$ by $$\tilde{v}_0(g)\, (F+N) = (v_0(g) \, F) + N$$
for each $F\in X_0$,  satisfying 
$$\alpha_g\big(\langle F+N\, ,\, F + N \rangle\big) 
= \big\langle \tilde{v}_0(g) (F+N) \, , \,\tilde{v}_0(g) (F+N) \big\rangle$$
for each $F\in X_0$. It clearly follows that $\tilde{v}_0(g)$ is isometric on $X_0/N$, 
so we may extend it to an isometry $v(g):X\to X$. By continuity, we get 
\begin{equation} \label{iii}
\alpha_g\big(\langle x, x \rangle\big) 
= \big\langle v(g)\,x,v(g)\,x\big\rangle 
\end{equation} 
for all $ x \in X$\,.

\smallskip 
Let $g\in G$ and $a\in A$. For $F \in X_0$ with standard decomposition $F = \sum_{i=1}^n \, \delta_{(g_i\,,\, a_i)}\cdot b_i$,  we  have
$$v_0(g)\,(F \cdot a) = \sum_{i=1}^n \, \delta_{\big(g\,g_i\,,\, \alpha_g(a_i)\,\sigma(g,g_i)\big)}\cdot \big(\sigma(g, g_i)^*\,\alpha_g(b_i\,a)\big) = \big(v_0(g)\, F\big) \cdot \alpha_g(a)\,.$$
Hence
\begin{align*}
\tilde{v}_0(g)\big((F+N)\cdot a\big) &= \tilde{v}_0(g)\big((F\cdot a) +N\big)  = \big(v_0(g)\,(F \cdot a)\big) + N \\
&= \big(\big(v_0(g)\, F\big) \cdot \alpha_g(a)\big) + N =  \big(\big(v_0(g)\, F\big)+ N\big)  \cdot \alpha_g(a)\\
& = \big(\tilde{v}_0(g)\,( F+ N)\big)  \cdot \alpha_g(a)
\end{align*}
and it follows by continuity  that 
\begin{equation} \label{iv}
v(g) (x\cdot a) = (v(g)\, x) \cdot \alpha_g(a)
\end{equation} 
 for all $x \in X$.

\smallskip 
Let $g, h \in G$. For $F \in X_0$, we  have
$$v_0(g) \,v_0(h) \,F = \big(\rho_0(\sigma(g,h))\, v_0(gh)\, F\big)\cdot \sigma(g,h)^*\,.$$
By linearity, it suffices to verify this formula for $F= \delta_{(k, b)}\cdot c$\,, where $k\in G$ and $b, c\in A$: 
\begin{align*}
v_0(g) \,v_0(h) \,\big( \delta_{(k, b)}\cdot c\big) &= v_0(g)\, \big(  \delta_{(hk\,, \,\alpha_h(b)\,\sigma(h,k))}\cdot \sigma(h,k)^*\,\alpha_h(c)\big)  \\
&=  \delta_{(ghk\,,\, \alpha_g(\alpha_h(b)\,\sigma(h,k))\,\sigma(g, hk))}\cdot \sigma(g, hk)^*\,\alpha_g\big(\sigma(h,k)^*\,\alpha_h(c)\big)  \\
&=  \delta_{(ghk\,,\, \sigma(g,h)\,\alpha_{gh}(b)\,\sigma(g,h)^*\,\alpha_g(\sigma(h,k))\,\sigma(g, hk))}\cdot \sigma(g, hk)^*\,\alpha_g(\sigma(h,k))^*\,\sigma(g,h)\,\alpha_{gh}(c)\,\sigma(g,h)^* \\
&=  \delta_{(ghk\,,\, \sigma(g,h)\,\alpha_{gh}(b)\,\sigma(gh, k))}\cdot \sigma(gh, k)^*\,\alpha_{gh}(c)\,\sigma(g,h)^*  \\
&= \rho_0(\sigma(g,h))\, \big(\delta_{(ghk\,,\, \alpha_{gh}(b)\,\sigma(gh, k))}\cdot \sigma(gh, k)^*\,\alpha_{gh}(c)\,\sigma(g,h)^*\big)\\
&= \rho_0(\sigma(g,h))\, \big(\big(v_0(gh)\,(\delta_{(k\,,\,b)}\cdot c\big)\cdot \sigma(g,h)^*\big)\\
&= \big(\rho_0(\sigma(g,h))\, v_0(gh)\, \big( \delta_{(k, b)}\cdot c\big)\big)\cdot \sigma(g,h)^*\,.
\end{align*}

\smallskip 
It follows easily that 
\begin{equation} \label{ii}
v(g) \,v(h) \,x = \big(\rho(\sigma(g,h))\, v(gh)\, x\big)\cdot \sigma(g,h)^* = {\rm ad}_\rho (\sigma(g,h))\, v(gh)\, x
\end{equation}
for all $x \in X$.  
As $v(e) = I$, one deduces readily from this equation that each $v(g)$ is invertible.  

\smallskip Let $g\in G$ and $a\in A$. For $F \in X_0$, we  have
$$v_0(g) \, \rho_0(a)\, F = \rho_0(\alpha_g(a)) \,v_0(g)\, F\,,$$
since, for all $h\in G$ and $b,c \in A$: 
$$v_0(g)\,\rho_0(a)\, \big(\delta_{(h,b)}\cdot c\big) = v_0(g)\, \big(\delta_{(h,\, ab)}\cdot c\big)
$$
$$=\delta_{(gh,\,\alpha_g(ab)\,\sigma(g,h))}\cdot \sigma(g,h)^*\,\alpha_g(c)
=\delta_{(gh,\,\alpha_g(a)\,\alpha_g(b))\,\sigma(g,h))}\cdot \sigma(g,h)^*\,\alpha_g(c)
$$
$$=\rho_0(\alpha_g(a))\,\big(\delta_{(gh,\,\alpha_g(b))\,\sigma(g,h))}\cdot \sigma(g,h)^*\,\alpha_g(c)\big)
=\rho_0(\alpha_g(a))\,v_0(g)\,\big(\delta_{(h,b)}\cdot c\big)\,.
$$
It follows that $\,v(g) \, \rho(a)  = \rho(\alpha_g(a)) \,v(g)$\,. Hence
\begin{equation}\label{i}
 \rho(\alpha_g(a)) =v(g) \, \rho(a) \,v(g)^{-1} \,.
\end{equation}
   
   \medskip Taking into account equations (\ref{iii}), (\ref{iv}), (\ref{ii}) and (\ref{i}), we see that we have shown that $(\rho,v)$ is an equivariant representation of $\Sigma$ on $X$.  We set $x = \delta_{(e,1)}+N \in X$. Then we have $$\big\langle x\, , \, \rho(a)\, v(g) \,x\big\rangle = \langle \delta_{(e,1)}\, , \, \rho_0(a)\, v_0(g)\, \delta_{(e,1)}\rangle_T = \langle \delta_{(e,1)}\, , \delta_{(g,a)}\rangle_T = \big[(e,1)\,,\, (g,a)\big]_T = T_g(a)\,,$$
 and the proof of the first assertion is completed.
 
\medskip  For $ a, \,b\in A$ and $ g\in G $, we have $\big(\rho(a)\, v(g) \,x\big)\cdot b = \delta_{(g,a)}\cdot b + N$. Hence, we get
 $$\text{{\rm Span}}\big\{ \big(\rho(a)\, v(g) \,x\big)\cdot b\mid a, \,b\in A\, ,\, g\in G\,\big\} = X_0/N,$$ 
 which is dense in $X$. Thus, 
   $x$ is cyclic for $(\rho, v)$ and the second assertion is proven.   
   
 \medskip Finally, assume that we also have $T=T_{\rho',v', x', x'}$ for some  equivariant representation $(\rho',v')$ of $\Sigma$ on a Hilbert $A$-module $X'$ and some $x' \in X'$ which is cyclic for $(\rho', v')$.
 Define a map $u_0: X_0 \to X'$ by 
 $$u_0 \, F =  \sum_{i=1}^n\, \big(\rho'(a_i)\, v'(g_i) \,x'\big)\cdot b_i$$
 whenever $F\in X_0$ has standard decomposition  $F = \sum_{i=1}^n \, \delta_{(g_i\,,\, a_i)}\cdot b_i$.
 One checks without much trouble that $u_0$ is $A$-linear. Moreover, we
 have $$\langle u_0 \,F, \,u_0\, F'\rangle = \langle  F,\, F'\rangle_T$$ for all $F, F' \in X_0$. 
 
 Indeed, consider $F$  as above and $F'\in X_0$ with   $F' = \sum_{j=1}^m \, \delta_{(g'_j\,,\, a'_j)}\cdot b'_j$ (standard decomposition).  The computation in Example \ref{EQRPD} (with $T=T_{\rho',v', x', x'}$) gives that 
 $$\big\langle\,\rho'(a)\, v'(g) \,x'\,, \, \rho'(b)\, v'(h) \,x'\,\big\rangle = \alpha_{g}\Big(T_{g^{-1}h} \big(\alpha_{g}^{-1}\big(a^*\,b\,\sigma(g, g^{-1}h)^*\big)\,\big)\Big)\, \sigma(g, g^{-1}h)
 = \big[(g,a)\, , \, (h,b)\, \big]_T$$
 for all $a, b \in A$ and $g, h\in G$. Hence,
  $$\langle u_0 \,F, \,u_0\, F'\rangle = \sum_{i=1}^n\sum_{j=1}^m\, \big\langle\,\big(\rho'(a_i)\, v'(g_i) \,x'\big)\cdot b_i\,, \, \big(\rho'(a'_j)\, v'(g'_j) \,x'\big)\cdot b'_j\,\big\rangle$$
$$= \sum_{i=1}^n\sum_{j=1}^m\, b_i^*\,\big\langle\,\rho'(a_i)\, v'(g_i) \,x'\,, \, \rho'(a'_j)\, v'(g'_j) \,x'\,\big\rangle\, b'_j$$ 
$$= \sum_{i=1}^n\sum_{j=1}^m\, b_i^*\,\big[(g_i,a_i)\, , \, (g'_j,a'_j)\, \big]_T\, b'_j = \langle  F,\, F'\rangle_T\,.$$ 
 In particular, it follows that $N$ is contained in the kernel of $u_0$. Hence, there exists a unique  $A$-linear isometry $u: X\to X'$ determined by $u (F+N) = u_0\, F$ for $F\in X_0$. Since the range  of $u$ is dense in $X'$ (as $x'$ is cyclic for $(\rho', v')$)  and closed (since $u$ is isometric), $u$ is surjective. Thus, by \cite[Theorem 3.5]{La1}, $u$ is a unitary in $\L(X,X')$. 
 
 Now, for $a, b, c \in A$ and $ \, g, h\in G$, we have
 $$\big[u\, \rho(a)\big]\big(\delta_{(h,b)}\cdot c  + N\big)= u_0 \,\big (\delta_{(h,ab)}\cdot c \big) = \big(\rho'(ab)\, v'(h)\, x'\big)\cdot c$$
 $$= \rho'(a)\big((\rho'(b)\, v'(h)\, x')\cdot c\big) = \big[\rho'(a)\, u\big](\delta_{(h,b)}\cdot c  + N\big)\,,$$
 and 
 $$\big[u\, v(g)\big]\big(\delta_{(h,b)}\cdot c  + N\big) = u_0\, \big(\delta_{(gh,\alpha_g(b)\sigma(g,h))}\cdot(\sigma(g,h)^*\alpha_g(c)) \big)$$
$$ =\big( \rho'(\alpha_g(b)\sigma(g,h))\, v'(gh)\, x'\big)\cdot(\sigma(g,h)^*\alpha_g(c))$$
$$ = 
\big( \rho'(\alpha_g(b))\, (\rho(\sigma(g,h))\, v'(gh)\, x')\cdot \sigma(g,h)^*)\big) \cdot \alpha_g(c)$$
$$ = \big( \rho'(\alpha_g(b))\, v'(g)\, v'(h)\, x'\big) \cdot \alpha_g(c)= \big( v'(g)\,\rho'(b)\, v'(h)\, x'\big) \cdot \alpha_g(c)$$
$$= v'(g)\,\big((\rho'(b)\, v'(h)\, x') \cdot c\big) = [v'(g)\, u]\big(\delta_{(h,b)}\cdot c  + N\big)\,,$$
so it follows that    $\rho'(a) = u\, \rho(a)\, u^*$ and $v'(g) = u\, v(g) \, u^*$ for all $a\in A$ and $g\in G$. 
Finally, we have $$u\, x = u_0\, \delta_{(e,1)} = \rho'(1)\, v'(e) \, x' = x'\,.$$
     \end{proof}
 
    A well-known property of a positive definite function $\varphi$ on $G$ is that $\varphi$ is bounded, with $\|\varphi\|_\infty =
 \varphi(e)$. Similary, we have:
  
 \begin{corollary} \label{bounded}
 Assume $T\in L(\Sigma)$ is positive definite $($w.r.t.\ $\Sigma)$. Then $T$ is bounded, in the sense that 
 $$\|T\|_\infty:=\sup \{\,\|T_g\|\mid g\in G\}  < \infty\,,$$
 and we have $\|T\|_\infty = \|T_e(1)\|$.
 \end{corollary}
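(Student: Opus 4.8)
The plan is to invoke the Gelfand--Raikov type theorem (Theorem \ref{GR}) to reduce the boundedness of $T$ to the elementary norm estimates available for a diagonal coefficient function of an equivariant representation. Since $T$ is positive definite (w.r.t.\ $\Sigma$), Theorem \ref{GR} produces an equivariant representation $(\rho,v)$ of $\Sigma$ on a Hilbert $A$-module $X$ and a vector $x\in X$ with $T=T_{\rho,v,x,x}$, that is, $T_g(a)=\langle x,\rho(a)\,v(g)\,x\rangle$ for all $g\in G$ and $a\in A$.

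First I would record the two features of $(\rho,v)$ that drive the estimate. On the one hand, $\rho:A\to\L(X)$ is a homomorphism of C$^*$-algebras, hence contractive, so $\|\rho(a)\|\le\|a\|$. On the other hand, each $v(g)$ is isometric on $X$: property (iii) together with the fact that $\alpha_g$ is an isometric automorphism of $A$ gives $\|v(g)z\|^2=\|\langle v(g)z,v(g)z\rangle\|=\|\alpha_g(\langle z,z\rangle)\|=\|\langle z,z\rangle\|=\|z\|^2$ for every $z\in X$. Combining these with the Cauchy--Schwarz inequality in $X$ (i.e.\ $\|\langle y,z\rangle\|\le\|y\|\,\|z\|$), I obtain, for all $g\in G$ and $a\in A$,
$$\|T_g(a)\|=\|\langle x,\rho(a)\,v(g)\,x\rangle\|\le\|x\|\,\|\rho(a)\|\,\|v(g)\,x\|\le\|a\|\,\|x\|^2\,,$$
so that $\|T_g\|\le\|x\|^2$ for every $g$, and therefore $\|T\|_\infty\le\|x\|^2<\infty$.

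For the identity $\|T\|_\infty=\|T_e(1)\|$, I would note that $\rho(1)=I$ and $v(e)=I$ (the latter follows from property (ii) applied with $g=h=e$ and the invertibility of $v(e)$ in $\I(X)$), so for this particular decomposition $T_e(1)=\langle x,x\rangle$ and hence $\|T_e(1)\|=\|x\|^2$. Since also $\|T_e(1)\|\le\|T_e\|\le\|T\|_\infty$ (because $\|1\|=1$), chaining these gives $\|T_e(1)\|\le\|T_e\|\le\|T\|_\infty\le\|x\|^2=\|T_e(1)\|$, so all four quantities coincide; in particular $\|T\|_\infty=\|T_e(1)\|$.

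There is essentially no hard step here once Theorem \ref{GR} is in hand: the corollary falls out of Cauchy--Schwarz and the isometry of the operators $v(g)$. The only point meriting a line of care is the observation that the \emph{same} vector $x$ simultaneously witnesses the upper bound $\|x\|^2$ for $\|T\|_\infty$ and the value $\|x\|^2=\|T_e(1)\|$, which is exactly what forces the inequalities to collapse to equalities.
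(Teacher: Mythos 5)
Your proof is correct and follows essentially the same route as the paper: both invoke Theorem \ref{GR} to write $T=T_{\rho,v,x,x}$, estimate $\|T_g(a)\|\le\|a\|\,\|x\|^2$ via Cauchy--Schwarz together with contractivity of $\rho$ and isometry of $v(g)$, and then close the chain $\|T_e(1)\|\le\|T_e\|\le\|T\|_\infty\le\|x\|^2=\|T_e(1)\|$. The extra details you supply (isometry of $v(g)$ from property (iii), $v(e)=I$) are correct and merely make explicit what the paper leaves implicit.
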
  
 \begin{proof}
 Write $T = T_{\rho, v, x, x}$ as in Theorem \ref{GR}. For each $g\in G$ and $a \in A$, we  have
 $$\|T_g(a)\| = \|\,\langle x, \rho(a)v(g) x\rangle\,\|\, \leq\, \|\rho(a)\| \,\|v(g)\|\, \|x\|^2  \,\leq \,\|a\| \,\|x\|^2\,.$$
 Hence, $\|T_g\| \leq \|x\|^2$ for all $g\in G$, so $T$ is bounded with $\|T\|_\infty \leq \|x\|^2\,.$ Moreover, since
 $T_e(1) = \langle x, x\rangle$, we get 
 $$\|T_e(1)\| \leq \|T_e\| \leq \|T\|_\infty \leq \|x\|^2 =  \|T_e(1)\|\,$$
 and the last assertion follows.
  \end{proof}
 \begin{corollary}\label{GR-Cor}
 Let $T\in L(\Sigma)$. Then the following statements are equivalent:
 
 \begin{itemize}
 \item[$a)$] $T = T_{\rho, v, x, x}$ for some equivariant representation $(\rho, v)$ of \,$\Sigma$ on some Hilbert $A$-module $X$ and some $x\in X$.
 
 \item[$b)$] $T$ is  positive definite $($w.r.t.\ $\Sigma)$.
 
 \item[$c)$] $T$ is a  full multiplier of $\Sigma$ such that $\Phi_T$ is completely positive.
 
 \item[$d)$] $T$ is a  reduced multiplier of $\Sigma$ such that $M_T$ is completely positive.
 
 \end{itemize}
\end{corollary}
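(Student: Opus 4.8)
The plan is to deduce the four equivalences by showing that each of $(b)$, $(c)$ and $(d)$ is equivalent to $(a)$; combining these yields the full cycle. Agreeably, every implication needed has already been established earlier in the paper, so the proof is essentially a matter of assembling the right references.

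First I would record $(a)\Leftrightarrow(b)$: the implication $(a)\Rightarrow(b)$ is precisely Example \ref{EQRPD}, and the converse $(b)\Rightarrow(a)$ is the Gelfand--Raikov type statement of Theorem \ref{GR} (which moreover gives that $x$ can be taken cyclic). Next, for the full crossed product, $(a)\Rightarrow(c)$ is contained in Theorem \ref{equiv-coeff-full}, specifically the case $x=y$, which asserts that $\Phi_T$ is completely positive; and $(c)\Rightarrow(a)$ is Theorem \ref{conv-full-mult}. Analogously, for the reduced crossed product, $(a)\Rightarrow(d)$ is the $x=y$ part of Theorem \ref{equiv-coeff}, while $(d)\Rightarrow(a)$ is Theorem \ref{conv-red-mult}. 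Since each of $(b)$, $(c)$, $(d)$ has thereby been shown to be equivalent to $(a)$, all four statements are equivalent and the proof is complete.

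I expect no genuine obstacle here: the substantive content lives entirely in Theorem \ref{GR} and in Theorems \ref{conv-red-mult} and \ref{conv-full-mult}, all of which are already available. The only point requiring a little care is making sure the cited results line up as claimed — in particular that Theorems \ref{conv-red-mult} and \ref{conv-full-mult} genuinely return a \emph{diagonal} coefficient $T_{\rho,v,x,x}$ of an equivariant representation, the vector $x$ being the image of $1\dot\otimes 1$ under the relevant localized KSGNS construction. One could additionally observe that $(c)\Rightarrow(b)$ and $(d)\Rightarrow(b)$ are directly available from Corollaries \ref{phi-cpCor} and \ref{M-cpCor}, which offers a slightly different routing through the implications, but this is not needed for the argument above.
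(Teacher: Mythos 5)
Your proof is correct: every implication you cite is indeed established where you say it is, and assembling $(b)\Leftrightarrow(a)$, $(c)\Leftrightarrow(a)$, $(d)\Leftrightarrow(a)$ does prove the corollary. However, the paper closes the cycle differently: it uses $a)\Rightarrow b)$ (Example \ref{EQRPD}), $b)\Rightarrow a)$ (Theorem \ref{GR}), $a)\Rightarrow c)$ and $a)\Rightarrow d)$ (Theorems \ref{equiv-coeff-full} and \ref{equiv-coeff}), and then returns via $c)\Rightarrow b)$ and $d)\Rightarrow b)$ (Corollaries \ref{phi-cpCor} and \ref{M-cpCor}) --- precisely the ``alternative routing'' you mention in passing and set aside. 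The point of that choice is architectural rather than logical: by avoiding Theorems \ref{conv-full-mult} and \ref{conv-red-mult} altogether, the corollary becomes an independent derivation of $c)\Rightarrow a)$ and $d)\Rightarrow a)$, which is why the paper remarks immediately afterwards that Corollary \ref{GR-Cor} ``provides alternative proofs of these two theorems.'' Your version is shorter to state but forfeits that byproduct, and it leans on Theorem \ref{conv-full-mult}, whose proof the paper only sketches; the paper's route instead rests on the fully written-out Propositions \ref{M-cp} and \ref{phi-cp} together with the Gelfand--Raikov Theorem \ref{GR}. Both arguments are valid; yours simply trades the extra dividend for directness.
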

\begin{proof}   $a) \Rightarrow b)$ is Example \ref{EQRPD}, $b) \Rightarrow a)$ is Theorem \ref{GR}, $a) \Rightarrow c)$ follows from Theorem \ref{equiv-coeff-full} , $a) \Rightarrow d)$ follows from Theorem \ref{equiv-coeff}, $c) \Rightarrow b)$ is Corollary \ref{phi-cpCor} and $d) \Rightarrow b)$ is Corollary \ref{M-cpCor}. 
\end{proof}
Note that $c) \Rightarrow a)$ is Theorem \ref{conv-full-mult}, while $d) \Rightarrow a)$ is Theorem \ref{conv-red-mult}. Thus, Corollary \ref{GR-Cor} provides alternative proofs of these two theorems. 

  \begin{example} \label{cpequi2}
  Let $\theta: A\to A$ be a linear map and let $\Theta: G\times A \to A $ be given by $\Theta(g,a) = \theta(a)$. Assume that $\theta$ is completely positive and  $\alpha$-equivariant.
  If $\sigma$ is scalar-valued or satisfies $\theta\circ\sigma = \sigma$, then we know from Example \ref{cpequi} that $\Theta$ is $\Sigma$-positive definite, so we conclude from Corollary \ref{GR-Cor} that $T_\Theta$ gives a reduced (resp.\ full) multiplier of $\Sigma$, hence that $M_\theta:=M_{T_\Theta}$ (resp.\ $\Phi_\theta:=\Phi_{T_\Theta}$) is a completely positive linear map on $C_{\rm r}^*(\Sigma)$ (resp.\ $C^*(\Sigma)$) such that $M_\theta\big(a\,\lambda_\Sigma(g)\big) = \theta(a)\,\lambda_\Sigma(g)$ (resp.\ $\Phi_\theta\big(i_A(a)\,i_G(g)\big) = i_A(\theta(a))\,i_G(g)$) for all $a\in A$ and $g\in G$. 
When $\sigma$ is trivial, this fact has long been a part of the folklore (e.g.\ it is mentioned in \cite[p.\ 173]{KW}); it can be deduced from \cite[Theorem 4.9 and Corollary 4.18]{BEW}. 
\end{example}

\begin{definition} We set $P(\Sigma) =\big\{T\in L(\Sigma)\mid T \, \text{is positive definite $($w.r.t.} \, \Sigma )\big\}$.
\end{definition}

It is clear from Corollary \ref{GR-Cor} that $P(\Sigma) \subset B(\Sigma)$ and that $P(\Sigma)$ is closed under the product in $B(\Sigma)$. Moreover, we have: 

\begin{corollary} \label{span-pd} 
$$B(\Sigma) = \text{\rm Span} \, P(\Sigma)\,.$$
\end{corollary}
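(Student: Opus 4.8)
The plan is to prove the two inclusions $\text{\rm Span}\, P(\Sigma) \subseteq B(\Sigma)$ and $B(\Sigma) \subseteq \text{\rm Span}\, P(\Sigma)$ separately. The first is immediate from Corollary \ref{GR-Cor}: every $T\in P(\Sigma)$ is of the form $T_{\rho,v,x,x}$, hence lies in $B(\Sigma)$, and since $B(\Sigma)$ is a (complex) vector space by Lemma \ref{alg}, it contains the span of $P(\Sigma)$.

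For the reverse inclusion, the natural approach is the classical polarization trick adapted to the Hilbert $A$-module setting. Let $T = T_{\rho,v,x,y}\in B(\Sigma)$ for some equivariant representation $(\rho,v)$ of $\Sigma$ on a Hilbert $A$-module $X$ and $x,y\in X$. Using the polarization identity in $X$, write
\begin{equation*}
\langle x, \rho(a)v(g)y\rangle = \frac{1}{4}\sum_{k=0}^{3} i^{k}\,\big\langle\, y + i^{k} x\,,\, \rho(a)\,v(g)\,(y + i^{k} x)\,\big\rangle\,,
\end{equation*}
which should hold after checking that the map $z\mapsto \langle z,\rho(a)v(g)z\rangle$ expands correctly; the cross terms $\langle x,\rho(a)v(g)x\rangle$ and $\langle y,\rho(a)v(g)y\rangle$ appear with coefficients summing to zero, while the mixed terms combine to give $\langle x,\rho(a)v(g)y\rangle$. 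Setting $z_k = y + i^{k}x\in X$, this shows
\begin{equation*}
T = \frac{1}{4}\sum_{k=0}^{3} i^{k}\, T_{\rho,v,z_k,z_k}\,.
\end{equation*}
By Example \ref{EQRPD}, each $T_{\rho,v,z_k,z_k}$ is positive definite w.r.t.\ $\Sigma$, i.e.\ lies in $P(\Sigma)$. Hence $T$ is a linear combination of elements of $P(\Sigma)$, so $T\in\text{\rm Span}\, P(\Sigma)$.

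I expect the main (and only real) obstacle to be verifying that the polarization identity goes through cleanly for the $A$-valued sesquilinear form $(z,w)\mapsto \langle z, \rho(a)v(g)w\rangle$: one must confirm it is additive in each variable and conjugate-linear/linear in the appropriate slots so that the standard four-term identity applies verbatim. Since the $A$-valued inner product on $X$ is (by the paper's convention) linear in the second variable and conjugate-symmetric in the sense $\langle z,w\rangle^* = \langle w,z\rangle$, and since $\rho(a)v(g)$ is a fixed $\mathbb{C}$-linear map on $X$, the form is genuinely sesquilinear over $\mathbb{C}$ and polarization holds with the usual scalar coefficients; the $A$-valued nature causes no difficulty because the coefficients $i^k/4$ are central scalars. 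Combining both inclusions gives $B(\Sigma) = \text{\rm Span}\, P(\Sigma)$.
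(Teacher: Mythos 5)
Your proof is correct and is essentially the paper's own argument: the inclusion $\text{\rm Span}\,P(\Sigma)\subseteq B(\Sigma)$ follows from Corollary \ref{GR-Cor}, and the reverse inclusion is obtained by polarizing the form $(z,w)\mapsto\langle z,\rho(a)v(g)w\rangle$, exactly as in the paper. Your choice $z_k=y+i^kx$ is in fact the one consistent with the paper's convention that inner products are linear in the second variable (the paper writes $x_k=x+i^ky$, which under that convention yields $T_{\rho,v,y,x}$ rather than $T_{\rho,v,x,y}$ --- an immaterial swap for the conclusion).
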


\begin{proof} If $T \in B(\Sigma)$, say $T= T_{\rho, v, x, y}$, then usual polarization gives $T = \frac{1}{4}\, \sum_{k=0}^3 \, i^k\, T_{\rho, v, x_k, x_k}\,,$ where $x_k = x + i^k\, y\, , \, k= 0,\ldots, 3$. Hence, the result follows from Corollary \ref{GR-Cor}.
\end{proof} 

Let $B$, $C$ be  C$^*$ algebras. We recall from \cite{Haa85} that a bounded linear map from  $B$ to $C$ is called {\it decomposable} (in the sense of Haagerup) if it is a  finite linear combination of  completely positive linear maps from $B$ to $C$. Clearly, such a map is completely bounded. Hence we may define a subspace $M_{\rm dec}(\Sigma)$ of $M_0A(\Sigma)$ and a subspace $M^{\rm u}_{\rm dec}(\Sigma)$ of $M^{\rm u}_{\rm cb}(\Sigma)$ by
$$M_{\rm dec}(\Sigma)=\big\{ T \in MA(\Sigma) \mid M_T:C_{\rm r}^*(\Sigma)\to C_{\rm r}^*(\Sigma)\,\, \text{is decomposable} \big\}\,,$$
$$M^{\rm u}_{\rm dec}(\Sigma)=\big\{ T \in M^{\rm u}(\Sigma) \mid \Phi_T:C^*(\Sigma)\to C^*(\Sigma)\,\, \text{is decomposable} \big\}\,.$$

\begin{corollary}\label{dec} We have  $$B(\Sigma) = M_{\rm dec}(\Sigma)= M^{\rm u}_{\rm dec}(\Sigma)\,.$$
\end{corollary}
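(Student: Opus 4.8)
The plan is to prove the two equalities $B(\Sigma) = M_{\rm dec}(\Sigma)$ and $B(\Sigma) = M^{\rm u}_{\rm dec}(\Sigma)$ by establishing inclusions in both directions, using Corollary \ref{GR-Cor} (equivalently, the Gelfand--Raikov theorem together with the multiplier theorems) as the main engine together with the polarization idea already used for Corollary \ref{span-pd}.

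\medskip\noindent\textbf{The inclusions $B(\Sigma)\subset M_{\rm dec}(\Sigma)$ and $B(\Sigma)\subset M^{\rm u}_{\rm dec}(\Sigma)$.} First I would take $T\in B(\Sigma)$ and invoke Corollary \ref{span-pd} to write $T = \frac14\sum_{k=0}^3 i^k\, T_k$ with each $T_k\in P(\Sigma)$. By Corollary \ref{GR-Cor} (the implications $b)\Rightarrow d)$ and $b)\Rightarrow c)$), each $T_k$ is a reduced multiplier with $M_{T_k}$ completely positive, and also a full multiplier with $\Phi_{T_k}$ completely positive. Since $T\mapsto M_T$ (resp.\ $T\mapsto \Phi_T$) is linear on multipliers --- this follows at once from the defining identities $M_T(\Lambda_\Sigma(f))=\Lambda_\Sigma(T\cdot f)$ and $\Phi_T(f)=T\cdot f$ on the dense subalgebra $C_c(\Sigma)$, together with uniqueness of the bounded extension --- we get $M_T = \frac14\sum_k i^k M_{T_k}$ and $\Phi_T = \frac14\sum_k i^k \Phi_{T_k}$, each a finite linear combination of completely positive maps, hence decomposable. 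Thus $T\in M_{\rm dec}(\Sigma)\cap M^{\rm u}_{\rm dec}(\Sigma)$.

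\medskip\noindent\textbf{The reverse inclusions.} Conversely, suppose $T\in M_{\rm dec}(\Sigma)$, so $M_T = \sum_{j=1}^m c_j\, M^{(j)}$ for scalars $c_j$ and completely positive linear maps $M^{(j)}:C_{\rm r}^*(\Sigma)\to C_{\rm r}^*(\Sigma)$. I would apply Proposition \ref{M-cp} to each $M^{(j)}$, obtaining $T_{M^{(j)}}\in P(\Sigma)$, hence $T_{M^{(j)}}\in B(\Sigma)$ by Corollary \ref{GR-Cor}. Now the point is that the assignment $M\mapsto T_M$ from Proposition \ref{M-cp} is manifestly linear in $M$ (it is given by $T_M(g,a)=E(M(a\lambda_\Sigma(g))\lambda_\Sigma(g)^*)$), so $T_{M_T} = \sum_j c_j\, T_{M^{(j)}}$ lies in $\text{\rm Span}\,P(\Sigma) = B(\Sigma)$. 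It remains to identify $T_{M_T}$ with $T$: exactly as in the proof of Corollary \ref{M-cpCor}, $T_{M_T}(g,a) = E(M_T(a\lambda_\Sigma(g))\lambda_\Sigma(g)^*) = E(T_g(a)) = T_g(a)$, so $T = T_{M_T}\in B(\Sigma)$. The argument for $M^{\rm u}_{\rm dec}(\Sigma)\subset B(\Sigma)$ is identical, using Proposition \ref{phi-cp} in place of Proposition \ref{M-cp} and Corollary \ref{phi-cpCor} in place of Corollary \ref{M-cpCor}.

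\medskip\noindent\textbf{Main obstacle.} There is no deep obstacle here --- the corollary is essentially a bookkeeping consequence of the machinery already in place --- but the one point requiring a little care is the linearity of the maps $T\mapsto M_T$, $T\mapsto\Phi_T$ and of the maps $M\mapsto T_M$, $\Phi\mapsto T_\Phi$, since ``decomposable'' is a statement about the induced operators on the crossed products rather than about $T$ itself. Linearity of $T\mapsto M_T$ (resp.\ $T\mapsto\Phi_T$) holds because a multiplier's associated operator is uniquely determined by its action on $C_c(\Sigma)$, where it is visibly additive and homogeneous in $T$; linearity of $M\mapsto T_M$ and $\Phi\mapsto T_\Phi$ is immediate from their explicit formulas. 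Once these trivial observations are recorded, the chain of implications from Corollary \ref{GR-Cor}, Proposition \ref{M-cp}, Proposition \ref{phi-cp} and Corollary \ref{span-pd} closes up the argument. One should also remark that the proof simultaneously shows $M_{\rm dec}(\Sigma)=M^{\rm u}_{\rm dec}(\Sigma)$ as an equality of subspaces of $L(\Sigma)$, even though the two are defined via the reduced and full crossed products respectively.
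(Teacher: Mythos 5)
Your proof is correct and follows essentially the same route as the paper: polarization via Corollary \ref{span-pd} plus Corollary \ref{GR-Cor} for the forward inclusions, and Proposition \ref{M-cp} (resp.\ Proposition \ref{phi-cp}) applied to a decomposition of $M_T$ (resp.\ $\Phi_T$) into completely positive maps, together with the identity $T=T_{M_T}$, for the reverse inclusions. The only difference is cosmetic: you spell out the full-crossed-product case and the linearity of $T\mapsto M_T$ and $M\mapsto T_M$, which the paper leaves to the reader.
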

\begin{proof} Let $T \in B(\Sigma)$. Using Corollary \ref{span-pd}, we may write $T= \sum_{i=1}^n c_i\,T_i$\,,  where 
$c_1,\ldots, c_n \in \Complessi$ and $ T_1, \ldots, T_n \in P(\Sigma)$. Then $M_T = \sum_{i=1}^n c_i\, M_{T_i} $ is a linear combination of completely positive maps from $C^*_{\rm r}(\Sigma)$ to itself, using now Corollary \ref{GR-Cor}. Hence, $M_T$ is decomposable, so $T\in M_{\rm dec}(\Sigma)$. Thus, $B(\Sigma) \subset M_{\rm dec}(\Sigma)$. 

Assume now that $T\in M_{\rm dec}(\Sigma)$, and write $M_T = \sum_{i=1}^n c_i\,M_i$ \,,  where 
$c_1,\ldots, c_n \in \Complessi$ and $ M_1, \ldots, M_n$ are completely positive maps from $C^*_{\rm r}(\Sigma)$ to itself. Then, using the notation introduced in Proposition \ref{M-cp}, we have $T= T_{M_T} = \sum_{i=1}^n c_i \,T_{M_i}$, the last equality being easily checked. Now, using Corollary \ref{M-cpCor}, each $T_{M_i}$ is positive definite (w.r.t. $\Sigma)$, so we get that $T \in {\rm Span}\, P(\Sigma) = B(\Sigma)$. Thus, $B(\Sigma) \supset M_{\rm dec}(\Sigma)$.

Together, we have shown the first equality that was to be proved. The proof that $B(\Sigma) = M^{\rm u}_{\rm dec}(\Sigma)$ is similar and left to the reader.
\end{proof} 

\begin{remark} \label{PiBoDCH} When $\Sigma=(\Complessi, G, {\rm id}, 1)$, we have  $B(\Sigma)=B(G)$ , $M_{\rm cb}^{\rm u}(\Sigma) = M_{\rm cb}^{\rm u}(G)$ and $M_0A(\Sigma)=M_0A(G)$,  and it is known that $B(G) = M_{\rm cb}^{\rm u}(G)$ (cf.\ \cite[Theorem 1]{Wal1}, \cite[Corollary 8.7]{Pis2}), while the equality $B(G) = M_0A(G)$ holds if and only if $G$ is amenable (cf.\ \cite[Corollary 1.8]{DCHa}, \cite[Theorem]{Bo}). In the general case, one may wonder what kind of conditions are sufficient to ensure that the equality $B(\Sigma) = M_{\rm cb}^{\rm u}(\Sigma)$ (resp.\  $B(\Sigma) = M_0A(\Sigma)$) holds. Corollary \ref{dec} says that one may equally well consider the question whether the equality $M^{\rm u}_{\rm dec}(\Sigma)= M^{\rm u}_{\rm cb}(\Sigma)$ (resp.\ $M_{\rm dec}(\Sigma)= M_0A(\Sigma)$) holds. 

Properties of $\Sigma$ will have to enter the picture. For example, assume that $G$ is amenable and $\sigma$ is trivial. Assume moreover that there exists a completely bounded $\alpha$-equivariant linear map  $\theta: A\to A$ that is {\it not} decomposable. Then  $C^*(\Sigma) = C^*_{\rm r}(\Sigma)$ and $\theta$ extends to a completely bounded linear map $\theta \times \iota:C_{\rm r}^*(\Sigma) \to C_{\rm r}^*(\Sigma)$ satisfying $(\theta \times \iota)\big(a\lambda_\Sigma(g)\big) = \theta(a) \lambda_\Sigma(g)$ for all $a\in A$ and $g\in G$ (cf.\ \cite[Theorem 3.5]{RSW}). This means that the map $\Theta \in L(\Sigma)$ associated to $\theta$ as in Example \ref{cpequi2} belongs to $M_0A(\Sigma)$, and we have $M_\Theta = \theta \times {\iota}$. However, 
$\Theta$ does not lie in $ M_{\rm dec}(\Sigma)$,  because decomposability of $M_\Theta$ would imply decomposability of $\theta$ (as is easily verified by using that $\theta(a) = (E\circ M_\Theta)(a)$ for all $a\in A$). Thus, we have \begin{equation*}
\Theta \in \, M_0A(\Sigma)\setminus M_{\rm dec}A(\Sigma) = M_0A(\Sigma)\setminus B(\Sigma) = 
M_{\rm dec}^{\rm u}(\Sigma)\setminus B(\Sigma)
\end{equation*}
in this case. Now, a deep result of Haagerup (cf.\ \cite[Corollary 2.8]{Haa85}) says that when $A$ is a von Neumann algebra, then there exists a completely bounded linear map  $\theta: A\to A$ that is not   decomposable if (and only if) $A$ is non injective. Hence,  when $G$ is amenable, $\alpha$ and $\sigma$ are trivial, and $A$ is a non injective von Neumann algebra, we can conclude that
$ B(\Sigma) \neq M_{\rm dec}^{\rm u}(\Sigma)=M_0A(\Sigma)$.
\end{remark}

 \medskip  Turning to another application of Theorem \ref{GR}, let us consider a map $\varphi: G \to A$ and let $L^\varphi \in L(\Sigma)$ be given by
 $$L^\varphi(g,a) = \varphi(g) \,a$$ for $g \in G$ and $ a \in A$.  
Such maps (and their right-handed versions) naturally arise when considering coefficients of an equivariant representation $(\rho, v)$ of $\Sigma$ on $X$  associated with central vectors. Indeed, as in \cite[Example 4.11]{BeCo4}, we have   
$$T_{\rho, v, x, y}(g,a) = \big\langle x,  v(g)y \big\rangle\, a \quad \text{ if } y\in Z_X,$$
while 
$$T_{\rho, v, x, y}(g,a) = a \, \big\langle x ,  v(g)y \big\rangle\quad \text{ if }  x\in Z_X\,.$$

\smallskip \noindent In particular, when $x, y \in Z_X$, we have $\big\langle x,  v(g)y \big\rangle \in Z(A)$ and
\begin{equation}\label{eqcentr}
T_{\rho, v, x, y}(g,a) = \big\langle x,  v(g)y \big\rangle\, a =  a \, \big\langle x ,  v(g)y \big\rangle\,.
\end{equation}

\smallskip We note that if $\varphi: G \to A$, then $L^\varphi$ is positive definite (w.r.t.\ $\Sigma$)
 if and only if  the matrix 
 $$\Big[\alpha_{g_i}\big(\varphi(g_i^{-1}g_j)\big) \, a_i^*\, a_j\Big]$$
 is 
 positive in $M_n(A)$ for all $n\in \Naturali$, $g_1, \ldots, g_n \in G$ and $a_1, \ldots, a_n \in A$. This is an immediate consequence of the following computation:
  
  \quad $\alpha_{g_i}\Big(L^\varphi_{g_i^{-1}g_j}\big( \alpha_{g_i}^{-1}( a_i^*\, a_j\,\sigma(g_i, g_i^{-1}g_j)^*\big)\Big)\,\sigma(g_i, g_i^{-1}g_j)$
 
 \vspace{-4ex} \begin{align*} &= \alpha_{g_i}\Big(\varphi(g_i^{-1}g_j) \,\alpha_{g_i}^{-1}\big( a_i^*\, a_j(\sigma(g_i, g_i^{-1}g_j)^*)\,\big)\Big)\, \sigma(g_i, g_i^{-1}g_j)\\
 &= \alpha_{g_i}\big(\varphi(g_i^{-1}g_j)\big) \, a_i^*\, a_j\,.
 \end{align*}
Now, following \cite{AD1, AD2},  the function  $\varphi$ is said to be {\it AD-positive definite}
(w.r.t.\ $\Sigma$) if,
for any $g_1,\ldots,g_n \in G$, the matrix $$\Big[\alpha_{g_i}\big(\varphi(g_i^{-1}g_j)\big)\Big]$$ is positive in $M_n(A)$.

When $\sigma =1$, Anantharaman-Delaroche  establishes in \cite[Proposition 2.3]{AD1} a Gelfand-Raikov type of result for AD-positive-definite functions (w.r.t.\ $\alpha$), involving so-called  $\alpha$-equivariant actions of $G$ on Hilbert $A$-modules.  Her result is related to Theorem \ref{GR} when $T=L^\varphi$, but one should note that these two results are not equivalent in this case.  
Indeed, we have:  

\begin{proposition} \label{L-phi} Let $\varphi: G\to A$. Then the following conditions are equivalent:
\begin{itemize}
\item[a)] $L^\varphi$ is  positive definite (w.r.t.\ $\Sigma$), that is, $L^\varphi \in P(\Sigma)$;
\item[b)] $\varphi$ takes its values in $Z(A)$, the center of $A$, and $\varphi$ is  AD-positive definite $($w.r.t. $\Sigma)$;
\item[c)] $L^\varphi = T_{\rho, v, x,x}$ for some equivariant representation $(\rho,v)$ of $\Sigma$ on a Hilbert $A$-module $X$ and some $x \in Z_X$ $($the central part of $X$ w.r.t.\ $\rho$$)$; 
\item[d)] $L^\varphi $ is a $($reduced$)$ multiplier of $\Sigma$ such that $M_{L^\varphi}$ is completely positive; 
\item[e)] $L^\varphi $ is a full multiplier of $\Sigma$ such that $\Phi_{L^\varphi}$ is completely positive. 
\end{itemize}
\end{proposition}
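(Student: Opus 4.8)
The plan is to prove the implications $(a)\Rightarrow(b)\Rightarrow(c)\Rightarrow(a)$, and to observe that $(a)\Leftrightarrow(d)\Leftrightarrow(e)$ is nothing but Corollary~\ref{GR-Cor} applied to the map $T=L^\varphi$ (conditions $(d)$ and $(e)$ being precisely items $d)$ and $c)$ of that corollary). Throughout I will use the elementary computation carried out just before the statement, which shows that $L^\varphi\in P(\Sigma)$ if and only if the matrix $\big[\,\alpha_{g_i}\big(\varphi(g_i^{-1}g_j)\big)\,a_i^*\,a_j\,\big]$ is positive in $M_n(A)$ for all $n\in\Naturali$, all $g_1,\dots,g_n\in G$ and all $a_1,\dots,a_n\in A$.

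\textbf{$(a)\Rightarrow(b)$.} Choosing all $a_i=1$ shows at once that $\varphi$ is AD-positive definite (w.r.t.\ $\Sigma$). To see that $\varphi$ takes its values in $Z(A)$, I would fix $g\in G$ and apply the positivity condition with $n=2$, $g_1=e$, $g_2=g$ and arbitrary $a_1,a_2\in A$. Since a positive element of $M_2(A)$ is self-adjoint, comparing the $(1,2)$ and $(2,1)$ entries yields $a_2^*a_1\,\varphi(g)^* = \alpha_g\big(\varphi(g^{-1})\big)\,a_2^*a_1$ for all $a_1,a_2\in A$. Taking $a_1=a_2=1$ gives $\varphi(g)^*=\alpha_g(\varphi(g^{-1}))$; feeding this back in and letting $a_1=b$ range over $A$ with $a_2=1$ leaves $b\,\varphi(g)^*=\varphi(g)^*\,b$ for every $b\in A$, so $\varphi(g)^*$, hence $\varphi(g)$, lies in $Z(A)$.

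\textbf{$(b)\Rightarrow(c)$.} Assume $\varphi$ is $Z(A)$-valued and AD-positive definite, and set $c_{ij}=\alpha_{g_i}(\varphi(g_i^{-1}g_j))$. Since each $\alpha_{g_i}$ is an automorphism, $c_{ij}\in Z(A)$, and $[c_{ij}]\ge0$ in $M_n(A)$ by hypothesis; conjugating by the diagonal matrix $D=\mathrm{diag}(a_1,\dots,a_n)$ and using centrality of the $c_{ij}$'s gives $D^*[c_{ij}]D=[a_i^*\,c_{ij}\,a_j]=[c_{ij}\,a_i^*\,a_j]\ge0$. Hence $L^\varphi\in P(\Sigma)$, so Theorem~\ref{GR} provides an equivariant representation $(\rho,v)$ on a Hilbert $A$-module $X$ together with the cyclic vector $x=\delta_{(e,1)}+N$ such that $L^\varphi=T_{\rho,v,x,x}$. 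It remains to check that $x\in Z_X$: in the construction one has $\langle\delta_{(e,a)},\delta_{(e,b)}\rangle_T = T_e(a^*b)=\varphi(e)\,a^*b$, and since $\varphi(e)\in Z(A)$ a short computation shows that $\big\langle\,\delta_{(e,a)}-\delta_{(e,1)}\cdot a\,,\,\delta_{(e,a)}-\delta_{(e,1)}\cdot a\,\big\rangle_T=0$ for every $a\in A$ (all four terms in the expansion equal $\varphi(e)\,a^*a$); therefore $\rho(a)x=\delta_{(e,a)}+N=\delta_{(e,1)}\cdot a+N=x\cdot a$ in $X$, i.e.\ $x$ is central.

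\textbf{$(c)\Rightarrow(a)$ and conclusion.} The implication $(c)\Rightarrow(a)$ is immediate from Example~\ref{EQRPD}, where the centrality of $x$ plays no role, and $(a)\Leftrightarrow(d)\Leftrightarrow(e)$ is Corollary~\ref{GR-Cor} for $T=L^\varphi$, which closes the argument. The main obstacle is the "$Z(A)$-valued" half of $(a)\Rightarrow(b)$: one has to extract genuine commutation relations in $A$ purely from the self-adjointness of a $2\times2$ positive matrix over $A$, and this is exactly the point where $\Sigma$-positive definiteness of $L^\varphi$ turns out to be strictly stronger than AD-positive definiteness of $\varphi$.
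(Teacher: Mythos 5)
Your proposal is correct and follows essentially the same route as the paper: the $Z(A)$-valuedness in $(a)\Rightarrow(b)$ is extracted from the self-adjointness of the $2\times 2$ positive matrix exactly as in the paper's Remark on conjugate symmetry, the diagonal-conjugation argument for AD-positive definiteness and the verification that $\delta_{(e,a)}-\delta_{(e,1)}\cdot a$ lies in $N$ match the paper's computations, and the remaining equivalences are delegated to Example~\ref{EQRPD} and Corollary~\ref{GR-Cor} just as in the text. The only cosmetic difference is that you arrange $(a)\Rightarrow(b)\Rightarrow(c)\Rightarrow(a)$ as a cycle, whereas the paper proves $(a)\Leftrightarrow(b)$ and $(a)\Rightarrow(c)$ separately; this changes nothing of substance.
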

\begin{proof}
$a) \Rightarrow b)$: Assume $L^\varphi$ is  positive definite (w.r.t.\ $\Sigma$). Choosing $a_i=1$ for $i=1, \ldots, n$ in the definition gives immediately that  
 $\varphi$ is  AD-positive definite (w.r.t.\ $\Sigma$). Moreover, using Remark \ref{PD-conj} with $T=L^\varphi$, we get that 
 $$\big(\varphi(g)\, a\big)^* = \alpha_g\big(\varphi(g^{-1})\, \alpha_g^{-1}(a^*) \,\alpha_g^{-1}(\sigma(g, g^{-1})^*)\big)\, \sigma(g, g^{-1}) =  \alpha_g\big(\varphi(g^{-1})\big)\, a^*$$
 for all $g\in G$ and $a \in A$. In particular, we have $\varphi(g)^* = \alpha_g\big(\varphi(g^{-1})\big)$, and it then follows that $$a^*\, \varphi(g)^* = \big(\varphi(g)\, a\big)^* = \varphi(g)^*\, a^*$$
 for all $g\in G$ and $a\in A$. Hence $\varphi(g) \in Z(A)$ for every $g\in G$. 
 
 \smallskip 
 $b) \Rightarrow a)$: Assume   $\varphi : G\to Z(A)$ is  
  AD-positive definite (w.r.t. $\Sigma$). Let $g_1, \ldots, g_n \in G$ and $a_1, \ldots, a_n \in A$. Then we have
 \begin{align*}\Big[\alpha_{g_i}\big(\varphi(g_i^{-1}g_j)\big) \, a_i^*\, a_j\Big] &= \Big[ a_i^*\, \alpha_{g_i}\big(\varphi(g_i^{-1}g_j)\big)\,a_j\Big] \\
 &=D^*\, \Big[ \alpha_{g_i}\big(\varphi(g_i^{-1}g_j)\big)\Big]\, D\,,
 \end{align*}
 where $D=$ diag$\big(a_1,\, \ldots,\, a_n\big)$. As $B:=\big[\alpha_{g_i}\big(\varphi(g_i^{-1}g_j)\big)\big]$ is assumed to be positive in $M_n(A)$, 
 $D^*BD$ is positive in $M_n(A)$. Thus, it follows that $L^\varphi$ is  $\Sigma$-positive definite. 

\smallskip 
 $a) \Rightarrow c)$: Assume $L^\varphi$ is  positive definite (w.r.t.\ $\Sigma$). From Theorem \ref{GR}, we know that
 c) holds, except for the fact that $x$ may be chosen in $Z_X$. Let us set $T= L^\varphi$ and use the notation introduced in the proof of this theorem. We have then that $x = \delta_{(e,1)} + N \in X$. To check that $x \in Z_X$, we have to show that $\rho(a) x = x\cdot a$ for all $a\in A$. Let $a\in A$. As 
$$\Big\langle \delta_{(e,a)} -\delta_{(e,1)}\cdot a\, , \, \delta_{(e,a)} -\delta_{(e,1)}\cdot a\Big\rangle_{L^\varphi}= 
\varphi(e)\, a^*a - a^*\varphi(e) \, a - \varphi(e)\,  a^* a + a^*\varphi(e) \, a = 0\, ,$$
we have that $\delta_{(e,a)} -\delta_{(e,1)}\cdot a \in  N$. Hence, 
we get that $$\rho(a) x = \delta_{(e,a)} + N = \delta_{(e,1)}\cdot a + N = x\cdot a$$
as desired.

\smallskip Finally, $c) \Rightarrow a)$ follows from Example \ref{EQRPD}, while $a) \Leftrightarrow d)$ and $a) \Leftrightarrow e)$ follows from Corollary \ref{GR-Cor}.

\end{proof}

\begin{remark} \label{DongR} When $\sigma=1$, then the implication $b) \Rightarrow d)$ in Proposition \ref{L-phi} is due to Dong and Ruan \cite[Theorem 3.2]{DoRu} (although they use some different conventions). They also point out that if $b)$ holds, then the map $M_{L^\varphi}$ is an $A$-bimodule map, i.e., we have $M_{L^\varphi}(axb) = a \,M_{L^\varphi}(x)\, b$ for all $a, b \in A$ and $x \in C_{\rm r}^*(\Sigma)$. 
\end{remark}

\begin{remark} At the end of \cite[Example 4.11]{BeCo4}, we wondered whether functions from $G$ to $A$ that are AD-positive definite give rise to (bounded) multipliers of $\Sigma$ in general. The equivalence of $b)$ and $d)$ in Proposition \ref{L-phi} provides a partial answer to this question. However, if $\varphi:G\to A$ is AD-positive definite (w.r.t.\ $\Sigma$), but does not take all of its values in $Z(A)$, we can not exclude for the moment the fact that $L^\varphi$ could still lie in $M_0A(\Sigma)$, or in $MA(\Sigma)$.
\end{remark}

\begin{remark}
Of course, when $\varphi : G\to A$ is given, one may also consider $R^\varphi \in L(\Sigma)$ defined by $R^\varphi(g,a) = a \,\varphi(g)$. It is easily checked that  $R^\varphi$ is positive definite (w.r.t.\ $\Sigma$)
 if and only if  the matrix 
  $$\Big[a_i^*\, a_j\, {\rm Ad}\big(\sigma(g_i,g_i^{-1}g_j)^*\big)\, \alpha_{g_i}\big(\varphi(g_i^{-1}g_j)\big)\Big]$$
 is 
 positive in $M_n(A)$ for all $n\in \Naturali$, $g_1, \ldots, g_n \in G$ and $a_1, \ldots, a_n \in A$. In view of our discussion of $L^\varphi$,  
it is natural to say that $\varphi$ is {\it $\sigma$-AD-positive definite} (w.r.t.\ $\Sigma$) if, for any $g_1,\ldots,g_n \in G$, we have
$$\Big[{\rm Ad}\big(\sigma(g_i,g_i^{-1}g_j)^*\big)\, \alpha_{g_i}\big(\varphi(g_i^{-1}g_j)\big)\Big] \, $$
 is positive in $M_n(A)$. It is obvious that this concept coincides with AD-positive definiteness when $\varphi$ take values in $Z(A)$. It is therefore not difficult to check that Proposition \ref{L-phi} also holds if we replace $L^\varphi$ with $R^\varphi$ everywhere.
\end{remark}

\begin{remark}\label{pdclassic}
Consider a function $\varphi:G\to \Complessi$. In this case, we have 
$ T^\varphi=L^\varphi=R^\varphi$. Moreover, identifying $\Complessi$ with $\Complessi\cdot 1 \subset A$, one immediately sees that $\varphi$ is a positive definite function on $G$, i.e., $\varphi \in P(G)$,  if and only if $\varphi$ is AD-positive definite (w.r.t.\ $\Sigma$). Hence, it follows from Proposition \ref{L-phi} that $\varphi \in P(G)$ if and only if $T^\varphi \in P(\Sigma)$, if and only 
$T^\varphi = T_{\rho, v, x,x}$ for some equivariant representation $(\rho,v)$ on a Hilbert $A$-module $X$ and some $x \in Z_X$, if and only if $T^\varphi$ is a reduced (resp.\ full) multiplier of $\Sigma$ such that $M_{T^\varphi}$ (resp.\ $\Phi_{T^\varphi}$) is completely positive. To our knowledge, the first result in this direction goes back to Haagerup \cite{Ha} in the setting of crossed products of von Neumann algebras.  

\end{remark} 
\begin{remark}\label{fourier}
Following \cite{Eym}, one way to define the Fourier algebra of $G$ is to let $A(G)$ be the closure in $B(G)$ of the span of all positive definite functions on $G$ that have finite support. For $T\in L(\Sigma)$, let us say that $T$ has {\it finite support} if the set $\{ g\in G \mid T_g \neq 0\}$ is finite. Then one possible definition of the Fourier algebra of $\Sigma$ is to let $A(\Sigma)$ be the closure in $B(\Sigma)$ of the span of all elements in $P(\Sigma)$ that have finite support. One easily checks that $A(\Sigma)$ is then a two-sided ideal in $B(\Sigma)$.
A natural problem is to investigate whether this definition of $A(\Sigma)$ coincides with the other candidates that make sense in our setting (see \cite[Proposition (3.4) and the Th{\' e}or{\`e}me on p.\ 218]{Eym}). For example, is it true that $A(\Sigma)$, if defined as above, is equal to the set of coefficients of the regular equivariant representation of $G$ on $A^G$ ?  
\end{remark}

Among the many existing characterizations of the amenability of $G$ (see e.g.\ \cite[Theorem 2.6.8]{BrOz} for a few of these), one says that $G$ is amenable if and only if there exists a net $\{\varphi_i\}$ of normalized finitely supported positive definite functions on $G$ such that $\varphi_i \to 1$ pointwise. We propose below an analogous definition of amenability for $\Sigma$. We will call a net $\{T^i\}$ in $P(\Sigma)$ {\it uniformly bounded} when $\sup_i \|T^i\|_\infty = \sup_i\|T_e^i(1)\| < \infty$ (cf.\ Corollary \ref{bounded}). 

\begin{definition} We will say that  
$\Sigma$ is {\it amenable} whenever there exists a uniformly bounded net $\{T^i\} $ of  finitely supported elements in $P(\Sigma)$ such that $\lim_i \|T^i_g(a) - a\| = 0$ for every $g \in G$ and $a\in A$. 
\end{definition}
\begin{example}
Recall that $\Sigma$ is said to have {\it the weak approximation property} \cite{BeCo3} if there exists
an equivariant representation $(\rho, v)$ of $\Sigma$ on some Hilbert $A$-module $X$ and
nets $\{\xi_i\}, \{\eta_i\} $ in $C_c(G, X) \subset X^G$ 
satisfying
\begin{itemize}
\item[(a)] there exists some $M > 0$ such that $\|\xi_i\| \cdot \|\eta_i\| \leq M $ for all $i $\,;
\item[(b)]  
$\lim_i \big\| \big\langle \xi_i\,,\,\check\rho(a)\check{v}(g)\eta_i \big\rangle  - a\big\| = 0\,$ for all $g \in G$ and $a \in A$.
\end{itemize}
In this definition, $X^G$ denotes the Hilbert $A$-module given by  $$X^G= \Big\{ \xi: G \to X\mid \,\sum_{g\in G} \, \langle\xi(g),\xi(g)\rangle \,\, \text{is norm-convergent in}\, \,A\Big\} \, \simeq \, X\otimes \ell^2(G)\,,$$
the $A$-valued inner product being given by $ \langle\xi,\eta\rangle = \sum_{g\in G} \, \langle\xi(g),\eta(g)\rangle $ for $\xi, \eta \in X^G$, while $\check\rho := \rho \otimes \iota$ and $ \check{v} := v \otimes \lambda$. 

If the conditions (a) and (b) above hold with $\eta_i = \xi_i$ for all $i$, then $\Sigma$ is said to have the {\it positive weak approximation property}. In this case, if we set $T^i := T_{\check\rho, \,\check{v}, \,\xi_i, \,\xi_i}$ for each $i$, then it follows from Example \ref{EQRPD} that we get a finitely supported net $\{T^i\}$ in $P(\Sigma)$ such that $$\sup_i \|T^i\|_\infty = \sup_i \|T^i_e(1)\| = \sup_i \|\langle \xi_i, \xi_i\rangle \| = \sup_i \|\xi_i\|^2 \leq M$$ and
$$\lim_i \|T^i_g(a) - a\|= \lim_i \big\| \big\langle \xi_i\,,\,\check\rho(a)\check{v}(g)\eta_i \big\rangle  - a\big\| = 0\,,$$
hence we see that $\Sigma$ is then amenable. 

More concretely, let us assume that  the positive weak approximation property is achieved with $(\rho, v) = (\ell, \alpha)$, i.e., that there exists a net  $\{\xi_i\}$ in $C_c(G,A)$ such that  
\begin{itemize}
\item[(a')] $\sup_i \big\| \sum_{g\in G} \, \xi_i(g)^*\xi_i(g)\big\| < \infty\,$;

\item[(b')] 
$\lim_i \big\| \, \sum_{h\in G} \, \xi_i(h)^*a\,\alpha_g\big(\xi_i(g^{-1}h)\big)  - a\big\| = 0\,
$ for all $g \in G$ and $a \in A$.
\end{itemize}
These conditions say that $\Sigma$ satisfies {\it Exel's positive approximation property} \cite{Ex, ExNg}.
Then $\Sigma$ is amenable, and a net $\{T^i\}$ satisfying the required properties is given by $$T^i_g(a) = \sum_{h\in G} \, \xi_i(h)^*a\,\alpha_g\big(\xi_i(g^{-1}h)\big)$$ for $g\in G$ and $a\in A$.   
Note that if all $\xi_i$'s take their values in $Z(A)$, then (b') is equivalent to  $$\lim_i \big\| \, \sum_{h\in G} \, \xi_i(h)^*\,\alpha_g\big(\xi_i(g^{-1}h)\big)  - 1\big\| = 0\,
$$ for all $g \in G$. It is then straightforward to see that in the case where $\sigma=1$, then $\Sigma$ is amenable whenever the action $\alpha$ is amenable in the sense of \cite{BrOz}. 

\end{example}

As shown in \cite[Theorem 5.11]{BeCo3}, $\Sigma$ is regular whenever it has the weak approximation property.
We also have:
 
\begin{theorem}\label{amen}
Assume that $\Sigma$ is amenable. Then $\Sigma$ is regular. Hence, $C^*(\Sigma) \simeq C_{\rm r}^*(\Sigma)$. Moreover,
$C^*(\Sigma) \simeq C_{\rm r}^*(\Sigma)$ is nuclear if and only if $A$ is nuclear.
\end{theorem}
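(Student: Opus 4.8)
The plan is to derive regularity from the approximating net by a multiplier argument, and then to settle the nuclearity statement in its two directions.

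\textbf{Setup.} Fix a uniformly bounded net $\{T^i\}$ of finitely supported elements of $P(\Sigma)$ with $\lim_i\|T^i_g(a)-a\|=0$ for all $g\in G$, $a\in A$, and put $M=\sup_i\|T^i_e(1)\|<\infty$. By Corollary \ref{GR-Cor} every $T^i$ lies in $B(\Sigma)$, so Theorems \ref{equiv-coeff-full} and \ref{equiv-coeff} furnish completely positive maps $\Phi_{T^i}\colon C^*(\Sigma)\to C^*(\Sigma)$ and $M_{T^i}\colon C_{\rm r}^*(\Sigma)\to C_{\rm r}^*(\Sigma)$ with $\|\Phi_{T^i}\|_{\rm cb}=\|M_{T^i}\|_{\rm cb}=\|T^i_e(1)\|\le M$, such that $\Phi_{T^i}(f)=T^i\cdot f$ and $M_{T^i}(\Lambda_\Sigma(f))=\Lambda_\Sigma(T^i\cdot f)$ for $f\in C_c(\Sigma)$. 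Since each $T^i_g$ is linear and $T^i$ has finite support, $T^i\cdot f$ is supported in a fixed finite set for $f\in C_c(\Sigma)$, and $\sum_{g}\|T^i_g(f(g))-f(g)\|\to 0$; as this sum dominates both $\|T^i\cdot f-f\|_{\rm u}$ and $\|\Lambda_\Sigma(T^i\cdot f-f)\|$, and $C_c(\Sigma)$ is dense, the uniform bound $M$ shows that $\Phi_{T^i}\to{\rm id}$ and $M_{T^i}\to{\rm id}$ in the point-norm topology of $C^*(\Sigma)$ and $C_{\rm r}^*(\Sigma)$ respectively.

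\textbf{Regularity.} The key point is that every finitely supported $T\in P(\Sigma)$ is an rf-multiplier of $\Sigma$. From $[\lambda_\Sigma(g)\xi_0](h)=\delta_{g,h}$ one gets $\widehat{\Lambda_\Sigma(f)}=f$, and since $\|\eta(g)\|\le\|\eta\|$ for $\eta\in A^\Sigma$ and $\|\xi_0\|=1$, it follows that $\|f(g)\|\le\|\Lambda_\Sigma(f)\|$ for every $g$. Hence, if $T$ is supported in the finite set $F$, then for all $f\in C_c(\Sigma)$
$$\|\Phi_T(f)\|_{\rm u}=\|T\cdot f\|_{\rm u}\le\sum_{g\in F}\|T_g(f(g))\|\le|F|\,\|T\|_\infty\,\|\Lambda_\Sigma(f)\|.$$
As $\Lambda_\Sigma$ is faithful on $C_c(\Sigma)$, the assignment $\Lambda_\Sigma(f)\mapsto\Phi_T(f)$ is well defined and bounded on the dense subspace $\Lambda_\Sigma(C_c(\Sigma))$ of $C_{\rm r}^*(\Sigma)$, so it extends to a bounded map $\Psi_T\colon C_{\rm r}^*(\Sigma)\to C^*(\Sigma)$ with $\Psi_T\circ\Lambda_\Sigma=\Phi_T$ on all of $C^*(\Sigma)$; note that no uniform control on $\|\Psi_T\|$ is required. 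Now if $b\in\ker\Lambda_\Sigma$, then $\Phi_{T^i}(b)=\Psi_{T^i}(\Lambda_\Sigma(b))=0$ for every $i$, while $\Phi_{T^i}(b)\to b$; hence $b=0$. Thus $\Lambda_\Sigma$ is an injective $*$-homomorphism, therefore isometric, so $\Sigma$ is regular and $C^*(\Sigma)\simeq C_{\rm r}^*(\Sigma)$.

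\textbf{Nuclearity, necessity.} Suppose $C_{\rm r}^*(\Sigma)$ is nuclear. Since $A$ (identified with $\ell_\Sigma(A)$) is a unital C$^*$-subalgebra of $C_{\rm r}^*(\Sigma)$ and $E$ is a conditional expectation onto it, hence unital and completely positive, a completely positive factorization ${\rm id}_{C_{\rm r}^*(\Sigma)}=\lim_\lambda\psi_\lambda\varphi_\lambda$ through matrix algebras yields, upon composing with the inclusion $A\hookrightarrow C_{\rm r}^*(\Sigma)$ on the right and with $E$ on the left, such a factorization of ${\rm id}_A$ (convergence is preserved because $E$ is contractive); so $A$ is nuclear.

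\textbf{Nuclearity, sufficiency, and the main obstacle.} Suppose $A$ is nuclear; we must show $C_{\rm r}^*(\Sigma)$ is nuclear. By the setup the completely positive, uniformly bounded maps $M_{T^i}$ converge to ${\rm id}_{C_{\rm r}^*(\Sigma)}$ point-norm, so it suffices to show each $M_{T^i}$ is a nuclear map, i.e.\ that it factors, up to completely positive maps, through a matrix algebra over $A$. To this end one views $C_{\rm r}^*(\Sigma)\subseteq\L(A^\Sigma)$ and compresses to the finite corner $P_KA^\Sigma$, where $K\subseteq G$ is finite with $\{e\}\cup\mathrm{supp}(T^i)\subseteq KK^{-1}$; this corner is a finite orthogonal sum of copies of $A$, so $\L(P_KA^\Sigma)\cong M_{|K|}(A)$, which is nuclear, and $y\mapsto P_KyP_K$ is unital completely positive. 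Because $T^i$ is finitely supported, $M_{T^i}(\Lambda_\Sigma(f))=\Lambda_\Sigma(T^i\cdot f)$ is determined by the finitely many values $f(g)$, $g\in\mathrm{supp}(T^i)$, each of which is recoverable from $P_K\Lambda_\Sigma(f)P_K$; one then builds a completely positive map $M_{|K|}(A)\to C_{\rm r}^*(\Sigma)$ of the type $[c_{g,h}]\mapsto\sum_{g,h\in K}\lambda_\Sigma(g)\,\gamma_{g,h}(c_{g,h})\,\lambda_\Sigma(h)^*$, with suitable completely positive $\gamma_{g,h}$ extracted from the data $(\rho_i,v_i,x_i)$ realizing $T^i=T_{\rho_i,v_i,x_i,x_i}$, whose composition with the compression returns $M_{T^i}$ (or at least a net converging to ${\rm id}$). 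Nuclearity of $M_{|K|}(A)$ then factors this further through some $M_n(\Complessi)$, and a diagonal argument, using the uniform bound on $\|M_{T^i}\|_{\rm cb}$, yields the completely positive approximation property for $C_{\rm r}^*(\Sigma)$. I expect this last step to be the main obstacle: the completely positive reconstruction map must be set up so that the composition genuinely reproduces $M_{T^i}$ (or an approximate identity), and the cocycle $\sigma$ enters the bookkeeping nontrivially. An alternative worth pursuing is to first show that amenability of $\Sigma$ implies Exel's positive approximation property and then invoke the known nuclearity statement for systems satisfying it.
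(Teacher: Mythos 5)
Your regularity argument is correct and is a legitimate variant of the paper's: where the paper shows that $\Phi_{T}(x)$ lies in the closed subspace $C_S={\rm Span}\{a\odot g \mid a\in A,\ g\in S\}$ of $C^*(\Sigma)$ and then combines injectivity of $\Lambda_\Sigma$ on $C_c(\Sigma)$ with the intertwining relation $\Lambda_\Sigma\circ\Phi_{T^i}=M_{T^i}\circ\Lambda_\Sigma$, you instead show directly that each finitely supported $T\in P(\Sigma)$ is an rf-multiplier, via the estimate $\|f(g)\|\le\|\Lambda_\Sigma(f)\|$; the factorization $\Phi_{T}=\Psi_{T}\circ\Lambda_\Sigma$ kills $\ker\Lambda_\Sigma$ just as well, and is essentially the mechanism the paper alludes to in its remark on rf-multipliers and the weak approximation property. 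Your proof that nuclearity of $A$ is necessary (composing a completely positive factorization of ${\rm id}$ through matrix algebras with the inclusion $A\hookrightarrow C_{\rm r}^*(\Sigma)$ and the expectation $E$) is the paper's argument, spelled out.

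The gap is in the sufficiency direction of the nuclearity statement, and you have correctly located it yourself. The entire weight of that direction rests on the completely positive reconstruction map $[c_{g,h}]\mapsto\sum_{g,h\in K}\lambda_\Sigma(g)\,\gamma_{g,h}(c_{g,h})\,\lambda_\Sigma(h)^*$ from $M_{|K|}(A)$ to $C_{\rm r}^*(\Sigma)$: you neither construct the maps $\gamma_{g,h}$, nor verify complete positivity of the assembled block map (which is not automatic for Schur-type maps; it requires the matrix of maps $[\gamma_{g,h}]$ to be jointly of the form $V_g^*\pi(\cdot)V_h$, with the cocycle $\sigma$ and the twisted conjugations threaded through consistently), nor check that the composite with the compression actually returns $M_{T^i}$. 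A general finitely supported element of $P(\Sigma)$ comes only with the abstract data $(\rho_i,v_i,x_i)$, and extracting a completely positive factorization through $M_{|K|}(A)$ from it is a genuine piece of work that is not done here. Your fallback, deducing Exel's positive approximation property from amenability of $\Sigma$, goes the wrong way: the paper only establishes that Exel's property implies amenability, and the converse is among the open questions it records. The paper sidesteps all of this with a short tensor trick: for any unital $B$ form $\Sigma'=(A\otimes B,G,\alpha\otimes{\rm id},\sigma\otimes 1)$ (nuclearity of $A$ makes $A\otimes B$ unambiguous), observe that amenability passes to $\Sigma'$, and apply the already-proved regularity statement to $\Sigma'$ to obtain $C_{\rm r}^*(\Sigma)\otimes B\simeq C^*(\Sigma)\otimes_{\rm max}B\simeq C_{\rm r}^*(\Sigma)\otimes_{\rm max}B$, which is nuclearity. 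You should either replace your sketch with this reduction or carry out the factorization in full.
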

\begin{proof}
To prove the first assertion we will follow the approach used  in the proof of \cite[Theorem 2.6.8]{BrOz} in the case where $A$ and $\sigma$ are trivial, and $G$ is amenable. 

We first observe that if $S$ is a finite subset of $G$, then the subspace $C_S$ of $C_c(\Sigma)$ given by  $C_S:= {\rm Span}\,\{ a \odot g\mid a \in A, \,g\in S\}$ is closed in $C^*(\Sigma)$. Indeed, let $\{f_n\}$ be a sequence in $C_S$ converging to some $x\in C^*(\Sigma)$ and set $\widetilde{E} := E\circ \Lambda_\Sigma: C^*(\Sigma) \to A$, which is clearly continuous. For each $g\in S$, we then have $$f_n(g) = E\big(\Lambda_\Sigma(f_n)\,\lambda_\Sigma(g)^*\big) = \widetilde{E}\big(f_n\, i_G(g)^*\big) \to  \widetilde{E}\big(x \,i_G(g)^*\big) \,\, \text{as}\,\, n\to \infty\,.$$ So if we define $f \in C_S$ by $f =\sum_{g \in S} f(g)\odot g$\,, where $f(g) =  \widetilde{E}\big(x\, i_G(g)^*\big)$ for each $g \in S$, we get
$$\| x -f\|_{\rm u} \leq \| x -f_n\|_{\rm u} + \| f_n -f\|_{\rm u} \leq  \| x -f_n\|_{\rm u} + \sum_{g\in S} \|f(g) -f_n(g)\| \to 0 \,\, \text{as}\, \, n\to \infty\,.$$
Hence, $x = f \in C_S$.

Next, consider a  finitely supported $T\in P(\Sigma)$ and let $x \in C^*(\Sigma)$. Then $\Phi_T(x) \in C_c(\Sigma)$. Indeed, letting $S$ denote the support of $T$ and $\{f_n\}$ be a sequence in $C_c(\Sigma)$ converging to $x$, we have $\Phi_T(f_n) =\sum_{g\in S} T_g\big(f_n(g)\big)\odot g\, \in C_S$ for each $n$, so $\Phi_T(x) = \lim_n \Phi_T(f_n)$ belongs to $ \overline{C_S}=C_S \subset C_c(\Sigma)$.

 Let now $x \in C^*(\Sigma)$ and suppose that $\Lambda_\Sigma(x)=0$. Let  $\{T^i\}$ be a net as guaranteed by the amenability of $\Sigma$. Corollary \ref{GR-Cor} gives a net $\{ \Phi_{T^i}\}$ (resp.\ $\{ M_{T^i}\}$) of completely positive maps on $C^*(\Sigma)$ (resp.\  $C_r^*(\Sigma)$). Note that for each $i$ we have $\Lambda_\Sigma \circ \Phi_{T^i} = M_{T^i}\circ \Lambda_\Sigma$ on $C^*(\Sigma)$ since the two maps are continuous and agree on $C_c(\Sigma)$. So for each $i$ we get 
 $$\Lambda_\Sigma \big(\Phi_{T^i}(x)\big) = M_{T^i}\big(\Lambda_\Sigma(x)\big) = 0\,.$$
Since each $T^i$ is finitely supported, we have $\Phi_{T^i}(x) \in C_c(\Sigma)$. As $\Lambda_\Sigma$ is injective on $C_c(\Sigma)$, we obtain that  $\Phi_{T^i}(x) = 0$ for each $i$\,.

Observe that for  $f\in C_c(\Sigma)$ with support $F$, we have 
 $$\|\Phi_{T^i}(f) - f\|_{\rm u} = \big\|\sum_{g\in F} \,\big[T_g^i(f(g)) -f(g)\big]\odot g\big\|_{\rm u} \, \leq\, \sum_{g\in F} \,\big\|T_g^i(f(g)) -f(g)\|\,.$$
 Hence, using the assumption that $\lim_i \|T^i_g(a) - a\| = 0$ for every $g \in G$ and $a\in A$,  we get that $\lim_i \|\Phi_{T^i}(f) - f\|_{\rm u}=0$.  Since $\|\Phi_{T^i}\| = \|\Phi_{T^i}(1\odot e) \|= \|T_e^i(1)\| $ for each $i$ and $\{T^i\}$ is  uniformly bounded by assumption, we get that $\sup_i  \|\Phi_{T^i}\| < \infty$. Hence, by a density argument, it follows that
$\lim_i \Phi_{T^i}(x) =x\,$. Since $\Phi_{T^i}(x) = 0$ for each $i$\,, we conclude that $x =0$. This shows that $\Lambda_\Sigma $ is injective, i.e., $\Sigma$ is regular, as desired.   

The final assertion may be shown by adapting the proof of similar statements in the existing literature  (see e.g.\ \cite{AD2, BrOz, Ec}). 
For completeness, we sketch a proof. Assume that $A$ is nuclear and let $B$ be a  C$^*$-algebra. Without loss of generality, we may assume that $B$ is unital. We can then form the product system $\Sigma' = (A\otimes B, G, \alpha \otimes {\rm id}, \sigma \otimes 1)$, and it is almost  immediate that the amenability of $\Sigma$ passes to $\Sigma'$. Hence, using the first assertion with $\Sigma'$ instead of $\Sigma$, we get
$$C_r^*(\Sigma)\otimes B\, \simeq \,C_r^*(\Sigma') \,\simeq \,C^*(\Sigma') \,\simeq \,C^*(\Sigma) \otimes_{\rm max} B \,\simeq \,C_r^*(\Sigma) \otimes_{\rm max} B\,.$$ 
Thus, $C_r^*(\Sigma)\simeq C^*(\Sigma)$ is nuclear. Conversely, nuclearity of $A$ is necessary for this to hold since there exists a conditional expectation from $C_r^*(\Sigma)$ onto $A$. 
\end{proof}

\begin{remark}
In view of Theorem \ref{amen}, several questions arise. Does the regularity of $\Sigma$ imply its amenability ? Does the amenability of $\Sigma$ imply that $\Sigma$ has the weak approximation property ?   Do we get a strictly stronger notion of amenability by requiring that  $ T_e^i(1) = 1$ for all $i$  instead of just saying that the net $\{T^i\}$ is uniformly bounded ? 
\end{remark}

\subsection{On some commutative subalgebras of the Fourier-Stieltjes algebra}

It is clear that $B(\Sigma)$ contains several interesting commutative subalgebras, e.g.\ the canonical copy of $B(G)$, which is obviously contained in the center $ZB(\Sigma)$ of $B(\Sigma)$. We note that  
\begin{align*}
ZB(\Sigma)&=\{T \in B(\Sigma) \ | \ T \times T' = T' \times T \ \text{for all } T' \in B(\Sigma)\}\\
&= \{T \in B(\Sigma) \ | \ T \times T' = T' \times T \ \text{for all } T' \in P(\Sigma)\} \,. 
\end{align*}

We describe some properties of elements in $ZB(\Sigma)$. 

\begin{proposition}\label{centprop}
Let $T=T_{\rho,v,x,y} \in ZB(\Sigma)$. Then one has
\begin{itemize}
\item[$(i)$] $\big\langle x,y \big\rangle \in Z(A)$.
\item[$(ii)$] More generally, $T(g,a) = \big\langle x, \rho(a)v(g) y \big\rangle \in Z(A)$ for all $a \in Z(A)$ and $g \in G$.
\item[$(iii)$]  $T(g,a) = \big\langle x, \rho(a)v(g) y \big\rangle = a \,\big\langle x, v(g)y \big\rangle = \big\langle x, v(g)y \big\rangle\, a$ for all $a \in A$ and $g \in G$. 

\smallskip In other words, $T = L^\varphi= R^\varphi$ where $\varphi: G \to Z(A)$ is given by $\varphi(g) := \langle x, v(g) y \rangle$.
\item[$(iv)$] $T_{\rho,v,y,x} \in ZB(\Sigma)$.
\end{itemize}
\end{proposition}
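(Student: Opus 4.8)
The plan is to work through the four items in order, using at each stage the characterization $ZB(\Sigma)=\{T\in B(\Sigma)\mid T\times T'=T'\times T \text{ for all }T'\in P(\Sigma)\}$ together with the concrete elements of $P(\Sigma)$ coming from the trivial equivariant representation $(\ell,\alpha)$ on $X=A$. The key observation is that for $b\in A$ one has $T_{\ell,\alpha,b,1}(g,a)=\langle b,a\,\alpha_g(1)\rangle=b^*a$, so $(T_{\ell,\alpha,b,1})_g$ is just left multiplication by $b^*$ for every $g$; and more generally $T_{\ell,\alpha,b,c}(g,a)=b^*a\,\alpha_g(c)$. These are the test elements that detect centrality.

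For $(i)$: evaluate $T\times T_{\ell,\alpha,b,1}=T_{\ell,\alpha,b,1}\times T$ at $(e,1)$. The left side gives $T(e,b^*1)=T_e(b^*)=\langle x,\rho(b^*)y\rangle$, while the right side gives $b^*\,T(e,1)=b^*\langle x,y\rangle$; but one wants instead to use $T_{\ell,\alpha,1,c}$, whose $g$-component is right multiplication by $c$. Evaluating $T\times T_{\ell,\alpha,1,c}=T_{\ell,\alpha,1,c}\times T$ at $(e,1)$ yields $T_e(\alpha_e(c))=\langle x,\rho(c)y\rangle$ on one side and $\langle x,v(e)y\rangle\,c=\langle x,y\rangle\,c$ — wait, more carefully: $(T_{\ell,\alpha,1,c})_g(a)=a\,\alpha_g(c)$, so at $g=e$, $(T\times T_{\ell,\alpha,1,c})_e(1)=T_e(c)=\langle x,\rho(c)y\rangle$ and $(T_{\ell,\alpha,1,c}\times T)_e(1)=T_e(1)\,\alpha_e(c)=\langle x,y\rangle\,c$. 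Combining with the analogous computation using $T_{\ell,\alpha,b,1}$ (left multiplication), one gets $\langle x,\rho(c)y\rangle=\langle x,y\rangle c$ and, from the other test, $\langle x,\rho(b^*)y\rangle=b^*\langle x,y\rangle$; taking $c=b^*$ forces $\langle x,y\rangle c=c\langle x,y\rangle$ for all $c\in A$, i.e.\ $\langle x,y\rangle\in Z(A)$. This same pair of identities, namely $\langle x,\rho(a)y\rangle=a\langle x,y\rangle=\langle x,y\rangle a$ for all $a\in A$, will be reused below.

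For $(iii)$, which is really the heart of the matter: fix $g\in G$ and use the test elements $T_{\ell,\alpha,b,1}$ and $T_{\ell,\alpha,1,c}$ but now evaluate at $(g,a)$ rather than at $(e,1)$, exploiting that $(T_{\ell,\alpha,b,1})_g$ and $(T_{\ell,\alpha,1,c})_g$ are left- and right-multiplication operators independent of $g$. From $T\times T_{\ell,\alpha,b,1}=T_{\ell,\alpha,b,1}\times T$ evaluated at $(g,a)$ one gets $T_g(b^*a)=b^*\,T_g(a)$, i.e.\ $\langle x,\rho(b^*a)v(g)y\rangle=b^*\langle x,\rho(a)v(g)y\rangle$; setting $a=1$ gives $\langle x,\rho(b^*)v(g)y\rangle=b^*\langle x,v(g)y\rangle$, so $T_g(a)=a\,\langle x,v(g)y\rangle$ for all $a$ (replacing $b^*$ by $a$). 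Symmetrically, from $T\times T_{\ell,\alpha,1,c}=T_{\ell,\alpha,1,c}\times T$ at $(g,a)$ one gets $T_g(a\,\alpha_g(c))=T_g(a)\,\alpha_g(c)$; setting $a=1$ and letting $\alpha_g(c)$ range over all of $A$ gives $T_g(a)=\langle x,v(g)y\rangle\,a$. Hence $a\,\langle x,v(g)y\rangle=\langle x,v(g)y\rangle\,a$ for all $a$, so $\varphi(g):=\langle x,v(g)y\rangle\in Z(A)$, and $T=L^\varphi=R^\varphi$ as claimed. Then $(ii)$ is immediate: for $a\in Z(A)$, $T(g,a)=a\,\langle x,v(g)y\rangle=a\,\varphi(g)\in Z(A)$ since both factors lie in $Z(A)$. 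And $(i)$ is the special case $g=e$ of $(iii)$, so one could also just derive $(i)$ from $(iii)$.

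For $(iv)$: since $T=L^\varphi$ with $\varphi:G\to Z(A)$ AD-positive definite by Proposition \ref{L-phi} (because $L^\varphi\in P(\Sigma)\subset\ldots$ — actually we only know $T\in B(\Sigma)$, not $P(\Sigma)$, so instead argue directly), observe that $T_{\rho,v,y,x}(g,a)=\langle y,\rho(a)v(g)x\rangle$. Using the adjoint/conjugation relations for equivariant representations — concretely, $\langle y,\rho(a)v(g)x\rangle = \langle \rho(a^*)y, v(g)x\rangle$ and the relation between $T_{\rho,v,y,x}$ and $T_{\rho,v,x,y}$ analogous to Remark \ref{PD-conj}, or more simply by checking that $T_{\rho,v,y,x}=R^{\varphi^\#}$ for the appropriate $\varphi^\#:G\to Z(A)$ with values still central — one shows $T_{\rho,v,y,x}$ is again of the form $L^\psi=R^\psi$ with $\psi:G\to Z(A)$, and any such element commutes with every $T'\in B(\Sigma)$ by the computation at the start of \S4.3 (an element of the form $L^\psi=R^\psi$ with central values is central because $(L^\psi\times T')_g=(L^\psi)_g\circ T'_g$ is multiplication by the central element $\psi(g)$ followed by $T'_g$, which equals $T'_g$ followed by multiplication by $\psi(g)$ precisely when $\psi(g)$ is central and... one needs $T'_g$ to be $Z(A)$-bilinear, which holds since $T'_g=\langle x',\rho'(\cdot)v'(g)y'\rangle$ and $\rho'$ is a $*$-homomorphism fixing nothing, but multiplication by a central scalar commutes through). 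The cleanest route is: establish the general lemma that $L^\psi\in ZB(\Sigma)$ whenever $\psi:G\to Z(A)$, then show $T_{\rho,v,y,x}$ has this form.

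The main obstacle I expect is item $(iv)$, specifically verifying that $T_{\rho,v,y,x}$ indeed takes values in $Z(A)$ and is of the form $L^\psi$: this requires knowing that swapping $x$ and $y$ preserves the centrality, which is not formally symmetric in the definition $ZB(\Sigma)$ unless one first pushes through $(iii)$. So the right order is $(iii)\Rightarrow(i),(ii)$, and then use the structural conclusion of $(iii)$ (plus a conjugation/adjoint identity for coefficient functions) to handle $(iv)$. The computations with the test elements $T_{\ell,\alpha,b,c}$ are routine once one notes their $g$-components are $g$-independent multiplication operators; the only subtlety is making sure that as $c$ ranges over $A$, so does $\alpha_g(c)$, which holds because $\alpha_g$ is an automorphism.
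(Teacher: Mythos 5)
Your treatment of items $(i)$--$(iii)$ is correct and is essentially the paper's own argument: one tests $T$ against the coefficients $T_{\ell,\alpha,b,c}$ of the trivial equivariant representation, whose $g$-components are the ($g$-independent up to $\alpha_g(c)$) two-sided multiplication operators $a\mapsto b^*a\,\alpha_g(c)$, and reads off the resulting identity $\big\langle x,\rho(b^*a\,\alpha_g(c))v(g)y\big\rangle=b^*\big\langle x,\rho(a)v(g)y\big\rangle\alpha_g(c)$ for suitable choices of $a,b,c,g$. Your reorganization (proving $(iii)$ first and deducing $(i)$, $(ii)$) is a harmless variant; the paper derives $(i)$ and $(ii)$ by conjugating with unitaries and then gets $(iii)$ from the same identity, but the content is identical. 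Your remark that $\alpha_g(c)$ ranges over all of $A$ as $c$ does is exactly the point needed, and you use it correctly.

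Item $(iv)$, however, contains a genuine gap. Your proposed ``cleanest route'' rests on the lemma that $L^\psi\in ZB(\Sigma)$ whenever $\psi:G\to Z(A)$, and this lemma is \emph{false}. The obstruction is the one you yourself flag and then wave away: for $(T'\times L^\psi)_g(a)=T'_g(\psi(g)a)$ to equal $(L^\psi\times T')_g(a)=\psi(g)T'_g(a)$ one needs $T'_g$ to be $Z(A)$-linear, and a general coefficient $T'_g=\langle x',\rho'(\cdot)v'(g)y'\rangle$ is not: $\langle x',\rho'(z)w\rangle=\langle\rho'(z^*)x',w\rangle$ equals $z\langle x',w\rangle=\langle x'\cdot z^*,w\rangle$ only when $x'$ behaves centrally with respect to $\rho'$. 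The paper's own example in Section 3 is a counterexample to your lemma: take $A$ commutative (so $Z(A)=A$) with a nontrivial automorphism $\beta$ commuting with the (trivial) action; then $T_{\ell,\alpha,b,1}=L^\varphi$ with $\varphi\equiv b^*$ central-valued, yet it fails to commute with $T'=T_{\ell\circ\beta,\alpha,1,1}$, since $(T\times T')(g,1)=b^*$ while $(T'\times T)(g,1)=\beta(b)^*$. So knowing that $T_{\rho,v,y,x}$ has the form $L^\psi=R^\psi$ with $\psi$ central-valued does not by itself put it in $ZB(\Sigma)$. The correct route is the one you only gesture at: the conjugation $T\mapsto T^c$ of $M_0A(\Sigma)$ restricts to an isometric multiplicative involution of $B(\Sigma)$ satisfying $(T_{\rho,v,x,y})^c=T_{\rho,v,y,x}$ (Remark \ref{BM0}); since any multiplicative bijection preserves the center of an algebra, $T\in ZB(\Sigma)$ forces $T^c=T_{\rho,v,y,x}\in ZB(\Sigma)$. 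This is exactly the paper's proof of $(iv)$ (it also notes that a brute-force verification is possible), and it does not pass through the false intermediate lemma.
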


\begin{proof}
Let $T' = T_{\ell,\alpha,b,c}$ be any coefficient of the trivial equivariant representation,
i.e. $T'(g,a) = b^* a \,\alpha_g(c)$ for some $b,c \in A$. 
As $T$ commutes with $T'$, we get  that
\begin{equation}\label{center}
\big\langle x, \rho(b^* a \alpha_g(c)) v(g) y \big\rangle = b^* \, \big\langle x, \rho(a)v(g)y \big\rangle \,\alpha_g(c) \ , 
\end{equation}
for all $a,b,c \in A$ and $g \in G$.\\
$(i)$ Plugging into the equation (\ref{center}) $a=1$, $b = c =:u \in \mathcal{U}(A)$ and $g=e$, we get
$$\langle x, y \rangle = \langle x, \rho(u^* u) y \rangle = u^*\, \langle x,y \rangle \,u\,.$$
This implies
that $\langle x, y \rangle \in A$ commutes with the whole of $\U(A)$, 
and thus with $A$.\\
$(ii)$ The argument is similar. Given $a\in Z(A)$ and $g\in G$, we now choose $c\in \mathcal{U}(A)$ and set $b = \alpha_g(c) \in \U(A)$, and plug this into equation (\ref{center}). Then we let $c$ range over $\U(A)$.\\
$(iii)$ Now, plugging   $a = 1 = c$ and $b = a^*$ into equation  (\ref{center}),
we get the first equality. The second one follows from $(ii)$.\\ 
$(iv)$ This can be checked by brute force. Alternatively, one can use that $ZB(\Sigma)$ is closed under the conjugation in $B(\Sigma)$ (see Remark \ref{BM0}), and that $(T_{\rho,v,x,y})^c = T_{\rho,v,y,x} $\,.

\end{proof}

\begin{corollary}\label{strcent}
Let $T \in ZB(\Sigma)$. Then $\varphi: G\to A$ given by $\varphi(g) = T(g,1)$ takes its values in $Z(A)$, and we have $T = L^{\varphi}= R^{\varphi}$. For any equivariant representation $(\rho,v)$ of $\Sigma$ on some Hilbert $A$-module $X$ and $x, y \in X$ such that  $T= T_{\rho,v,x,y} $, we have $\varphi (g) = \langle x, v(g) y \rangle$. 
\end{corollary}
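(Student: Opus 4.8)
The plan is to derive Corollary~\ref{strcent} as an essentially immediate consequence of Proposition~\ref{centprop}. Let $T \in ZB(\Sigma)$ be given; write $T = T_{\rho,v,x,y}$ for some equivariant representation $(\rho,v)$ of $\Sigma$ on a Hilbert $A$-module $X$ and $x,y \in X$. First I would apply part $(iii)$ of Proposition~\ref{centprop}, which already tells us that $T(g,a) = a\,\langle x, v(g)y\rangle = \langle x, v(g)y\rangle\, a$ for all $a \in A$ and $g\in G$, and that $\langle x, v(g)y\rangle \in Z(A)$ for all $g$. Setting $\varphi(g) := \langle x, v(g)y\rangle$, this says precisely that $\varphi$ takes values in $Z(A)$ and that $T = L^\varphi = R^\varphi$ in the notation introduced before Proposition~\ref{L-phi}.

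The only point that needs a word of care is that the statement of the corollary defines $\varphi$ intrinsically by $\varphi(g) = T(g,1)$, i.e.\ without reference to any particular decomposition of $T$, and then asserts that for \emph{every} choice of $(\rho,v,x,y)$ representing $T$ one has $\varphi(g) = \langle x, v(g)y\rangle$. But this is automatic: plugging $a=1$ into the formula $T(g,a) = \langle x, v(g)y\rangle\, a$ from part $(iii)$ gives $T(g,1) = \langle x, v(g)y\rangle$, so the intrinsic quantity $T(g,1)$ coincides with $\langle x, v(g)y\rangle$ for any representing quadruple, and in particular is independent of the choice. Hence $\varphi$ as defined in the corollary equals $g\mapsto \langle x, v(g)y\rangle$ for each such choice, and the identities $T = L^\varphi = R^\varphi$ follow from part $(iii)$ as above.

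So the proof is really just a matter of bookkeeping: invoke Proposition~\ref{centprop}$(iii)$ once, observe that it holds for an arbitrary representation of $T$ as a coefficient function, and read off the claimed formulas. There is no genuine obstacle here; the substantive content — commutativity with the coefficients $T_{\ell,\alpha,b,c}$ of the trivial equivariant representation forcing $\langle x,\rho(a)v(g)y\rangle$ to reduce to a central multiple of $a$ — has already been done in Proposition~\ref{centprop}. I would therefore present the proof in two or three sentences, emphasizing only the (minor) subtlety that the formula $\varphi(g) = \langle x, v(g)y\rangle$ is being asserted for all representing quadruples simultaneously, which is what makes $\varphi$ well defined by $\varphi(g) = T(g,1)$ in the first place.

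\begin{proof}
Write $T = T_{\rho,v,x,y}$ for some equivariant representation $(\rho,v)$ of $\Sigma$ on a Hilbert $A$-module $X$ and $x,y\in X$. By Proposition~\ref{centprop}$(iii)$, we have $\langle x, v(g)y\rangle \in Z(A)$ and
$$T(g,a) = \big\langle x, \rho(a)v(g)y\big\rangle = a\,\big\langle x, v(g)y\big\rangle = \big\langle x, v(g)y\big\rangle\, a$$
for all $a\in A$ and $g\in G$. Taking $a=1$ gives $T(g,1) = \langle x, v(g)y\rangle$, so the function $\varphi$ defined by $\varphi(g) = T(g,1)$ satisfies $\varphi(g) = \langle x, v(g)y\rangle$; in particular $\varphi$ takes its values in $Z(A)$, and $\varphi$ does not depend on the chosen decomposition of $T$. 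The displayed identities now read $T(g,a) = \varphi(g)\,a = a\,\varphi(g)$ for all $g\in G$ and $a\in A$, that is, $T = L^\varphi = R^\varphi$.
\end{proof}
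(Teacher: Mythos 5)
Your proof is correct and follows exactly the route the paper intends: the corollary is stated without proof precisely because it is the immediate consequence of Proposition~\ref{centprop}$(iii)$ that you spell out, with the substitution $a=1$ identifying the intrinsic $\varphi(g)=T(g,1)$ with $\langle x, v(g)y\rangle$ for every representing quadruple. Nothing is missing.
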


We may also consider
$$BZ(\Sigma) = \{T \in L(\Sigma) \ | \ T = T_{\rho,v,x,y} \ \mbox{ for some equiv.\ rep.\ $(\rho,v)$ of $\Sigma$ on $X$ and $x, y \in Z_X$}\}\,.$$
 It is not difficult to check that $BZ(\Sigma)$ is a commutative subalgebra of $B(\Sigma)$,
which contains the canonical copy of $B(G)$, as follows immediately from Remark \ref{centvect}.
 Similar to the case where $T \in ZB(\Sigma)$, we have:
 
 \begin{proposition}\label{BZprop}
 Let
$T \in BZ(\Sigma)$. Then $\varphi: G\to A$ given by $\varphi(g) = T(g,1)$ takes its values in $Z(A)$, and we have $T = L^{\varphi}= R^{\varphi}$. For any equivariant representation $(\rho,v)$ of $\Sigma$ on some Hilbert $A$-module  $X$ and $x, y \in Z_X$ such that  $T= T_{\rho,v,x,y} $, we have $\varphi (g) = \langle x, v(g) y \rangle$.
\end{proposition}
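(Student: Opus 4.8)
The plan is to obtain everything directly from equation~(\ref{eqcentr}), which already contains the relevant computation. By the definition of $BZ(\Sigma)$, one may fix an equivariant representation $(\rho, v)$ of $\Sigma$ on a Hilbert $A$-module $X$ together with vectors $x, y \in Z_X$ such that $T = T_{\rho, v, x, y}$. Applying equation~(\ref{eqcentr}) to this triple yields, for all $g \in G$ and all $a \in A$,
$$\big\langle x, v(g)y \big\rangle \in Z(A) \qquad \text{and} \qquad T(g,a) = \big\langle x, v(g)y \big\rangle\, a = a\, \big\langle x, v(g)y \big\rangle\,.$$

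The next step is to identify the intrinsically defined function $\varphi$. Since $\varphi(g) = T(g,1)$ by definition, the displayed identity with $a = 1$ gives $\varphi(g) = \langle x, v(g)y \rangle$ for every $g \in G$; in particular $\varphi$ takes its values in $Z(A)$. Substituting this back into the displayed identity yields $T(g,a) = \varphi(g)\, a = a\, \varphi(g)$ for all $g \in G$ and $a \in A$, which is precisely the statement $T = L^\varphi = R^\varphi$. Finally, because $\varphi$ was extracted from $T$ alone (as the map $g \mapsto T(g,1)$, independent of any choice of representation), the equality $\varphi(g) = \langle x, v(g)y \rangle$ just established is valid for \emph{every} equivariant representation $(\rho, v)$ on a Hilbert $A$-module $X$ and every pair $x, y \in Z_X$ realizing $T$; this is the last assertion of the proposition.

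I do not expect a genuine obstacle here: the proposition is essentially a repackaging of equation~(\ref{eqcentr}) together with the observation that the canonical function $g \mapsto T(g,1)$ coincides with the coefficient $g \mapsto \langle x, v(g)y \rangle$. The only minor points worth recording explicitly in the write-up are that $BZ(\Sigma) \subset B(\Sigma) \subset L(\Sigma)$ (so that $T(g,1)$ makes sense and $T$ is indeed of the form $T_{\rho,v,x,y}$ for some admissible triple), and the fact --- already noted in the text preceding the proposition --- that $\langle x, v(g)y \rangle$ lies in $Z(A)$ whenever $x, y \in Z_X$, which in turn rests on $v(g)$ mapping $Z_X$ into $Z_X$ by properties (i) and (iv) of an equivariant representation.
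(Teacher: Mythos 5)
Your argument is correct and is essentially the paper's own proof: both fix a realization $T=T_{\rho,v,x,y}$ with $x,y\in Z_X$, invoke equation (\ref{eqcentr}) to get $T=L^\varphi=R^\varphi$ with $\varphi(g)=\langle x,v(g)y\rangle\in Z(A)$, and then observe that $\varphi(g)=T(g,1)$ is intrinsic to $T$, so the identity holds for every central realization. Nothing is missing.
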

\begin{proof}
Assume $T = T_{\rho,v,x,y}$ with $x,y \in Z_X$.
Then, using equation (\ref{eqcentr}), we get that  $T = L^\varphi = R^\varphi$, where $\varphi: G \to Z(A)$ is given by 
$\varphi(g) = \langle x, v(g)y \rangle$. But then $T(g,1)= \varphi(g)$ for all $g\in G$, so the proposition follows. 
\end{proof} 

The two previous results show that $ZB(\Sigma)$ and $BZ(\Sigma)$ have some common features. 
It seems to us that it should be interesting to understand the structure of these algebras, and also to investigate their relative positions. We include below a few results concerning these issues. We first introduce
$$PZ(\Sigma) = \{T \in L(\Sigma) \ | \ T = T_{\rho,v,x,x} \ \mbox{ for some equiv.\ rep.\ $(\rho,v)$ of $\Sigma$ on $X$ and $x \in Z_X$}\}\,, $$
which is a cone in $BZ(\Sigma)$ containing the canonical image of $P(G)$, as the reader will easily verify, using Remark \ref{pdclassic} for the last part.  

\begin{proposition} \label{PZspan}
We have $BZ(\Sigma) = {\rm Span}\,PZ(\Sigma)$. Moreover, 
\begin{align*}
PZ(\Sigma)&=  \{ L^\varphi\, | \, \mbox{$\varphi: G \to Z(A)$ is AD-positive $($w.r.t. $\Sigma)$}\} \\
&=P(\Sigma) \cap BZ(\Sigma)\,.
\end{align*}
\end{proposition}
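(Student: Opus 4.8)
The plan is to establish the two equalities in Proposition \ref{PZspan} by proving three inclusions, using Proposition \ref{BZprop}, Proposition \ref{L-phi}, and Corollary \ref{span-pd}. First I would unwind the definitions: an element of $PZ(\Sigma)$ has the form $T_{\rho,v,x,x}$ with $x \in Z_X$, so by Proposition \ref{BZprop} it equals $L^\varphi$ with $\varphi(g) = \langle x, v(g)x\rangle$ taking values in $Z(A)$; by Example \ref{EQRPD} it is also $\Sigma$-positive definite, hence $L^\varphi \in P(\Sigma)$, and then the equivalence $a)\Leftrightarrow b)$ in Proposition \ref{L-phi} forces $\varphi$ to be AD-positive definite (w.r.t.\ $\Sigma$). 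This gives $PZ(\Sigma) \subseteq \{L^\varphi \mid \varphi: G\to Z(A) \text{ AD-positive}\}$ and also $PZ(\Sigma) \subseteq P(\Sigma)\cap BZ(\Sigma)$ (the latter since $PZ(\Sigma)\subseteq BZ(\Sigma)$ trivially).

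Next I would prove the reverse inclusions for the description of $PZ(\Sigma)$. If $\varphi: G\to Z(A)$ is AD-positive definite, then by $b)\Rightarrow c)$ of Proposition \ref{L-phi}, $L^\varphi = T_{\rho,v,x,x}$ for some equivariant representation $(\rho,v)$ and some $x\in Z_X$, which is precisely the statement that $L^\varphi \in PZ(\Sigma)$; this closes the first displayed equality. For the second, suppose $T \in P(\Sigma)\cap BZ(\Sigma)$. By Proposition \ref{BZprop}, $T = L^\varphi$ with $\varphi: G\to Z(A)$; since $T\in P(\Sigma)$ means $L^\varphi$ is $\Sigma$-positive definite, $b)$ of Proposition \ref{L-phi} gives that $\varphi$ is AD-positive definite, so $T=L^\varphi$ lies in the set just identified with $PZ(\Sigma)$. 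Hence $P(\Sigma)\cap BZ(\Sigma)\subseteq PZ(\Sigma)$, giving the second equality.

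For $BZ(\Sigma) = \mathrm{Span}\, PZ(\Sigma)$, the inclusion $\mathrm{Span}\,PZ(\Sigma)\subseteq BZ(\Sigma)$ is clear since $PZ(\Sigma)\subseteq BZ(\Sigma)$ and $BZ(\Sigma)$ is a (linear) subspace. For the converse, take $T = T_{\rho,v,x,y}$ with $x,y\in Z_X$ and apply the polarization identity as in the proof of Corollary \ref{span-pd}: $T = \frac14\sum_{k=0}^3 i^k\, T_{\rho,v,x_k,x_k}$ with $x_k = x + i^k y$. The only point to check is that each $x_k$ again lies in $Z_X$, which is immediate because $Z_X$ is an $A$-submodule: $\rho(a)x_k = \rho(a)x + i^k\rho(a)y = x\cdot a + i^k(y\cdot a) = x_k\cdot a$. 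Thus each summand $T_{\rho,v,x_k,x_k}$ belongs to $PZ(\Sigma)$ and $T \in \mathrm{Span}\,PZ(\Sigma)$.

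I do not anticipate a serious obstacle here: the proposition is essentially a bookkeeping consequence of Proposition \ref{L-phi} (especially the chain $a)\Leftrightarrow b)\Leftrightarrow c)$) together with the polarization trick already used for Corollary \ref{span-pd}. The one place demanding a little care is confirming that $Z_X$ is stable under the linear combinations appearing in polarization so that the polarized summands stay inside $PZ(\Sigma)$ rather than merely inside $P(\Sigma)$; this is the step that distinguishes the present statement from Corollary \ref{span-pd}, and it is handled by the one-line submodule computation above.
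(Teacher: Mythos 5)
Your proof is correct and follows essentially the same route as the paper: the identity $BZ(\Sigma)=\mathrm{Span}\,PZ(\Sigma)$ via polarization together with the observation that $x+i^k y\in Z_X$ when $x,y\in Z_X$, and the two descriptions of $PZ(\Sigma)$ as direct consequences of Proposition \ref{L-phi} and Proposition \ref{BZprop}. You merely spell out the inclusions that the paper leaves implicit, so there is nothing to correct.
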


\begin{proof}
The first statement  is a consequence of the polarization identity, 
after noticing that if $x,y \in Z_X$ then $x + i^k y \in Z_X$, for $k=0,1,2,3$.
The second one is a consequence of Proposition \ref{L-phi} and Proposition \ref{BZprop}.
\end{proof}

\begin{lemma}\label{xcentral}
Assume \,$T \in ZB(\Sigma)$ can be written as \,$T=T_{\rho,v,x,y}$ for some equivariant representation $(\rho,v)$ on a Hilbert $A$-module $X$ and $x,y \in X$, such that, in addition, $y$ is cyclic for $(\rho,v)$, namely
$${\rm Span} \Big\{ \big(\rho(a)v(g)y\big) \cdot a' \ | \ a,a' \in A,\, g \in G\Big\}$$
is dense in $X$. Then $x \in Z_X$.
\end{lemma}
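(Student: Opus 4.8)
The plan is to exploit the relation \eqref{center} that holds for every $T \in ZB(\Sigma)$, now allowing $y$ to be cyclic. Recall that for $T = T_{\rho,v,x,y} \in ZB(\Sigma)$ and any coefficient $T' = T_{\ell,\alpha,b,c}$ of the trivial equivariant representation, commutation $T \times T' = T' \times T$ forces
$$\big\langle x, \rho(b^*a\,\alpha_g(c))\, v(g)\, y \big\rangle = b^*\,\big\langle x, \rho(a)\,v(g)\,y\big\rangle\,\alpha_g(c)$$
for all $a,b,c \in A$ and $g \in G$. First I would specialize this by taking $g = e$ and $c = 1$: for all $a, b \in A$ one gets $\langle x, \rho(b^*a)\,y\rangle = b^*\,\langle x, \rho(a)\,y\rangle$, i.e. $\langle \rho(b)x, \rho(a)y\rangle = b^*\langle x,\rho(a)y\rangle = \langle x, \rho(b^*)\rho(a)y\rangle = \langle x, \rho(b^*a)y\rangle$. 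Better, rewrite it as: for every $d \in A$,
$$\big\langle \rho(b)\,x - x\cdot b,\ \rho(d)\, y \big\rangle = \big\langle x, \rho(b^*)\rho(d)y\big\rangle - b^*\langle x, \rho(d)y\rangle = 0,$$
using that $\langle x \cdot b, \rho(d)y\rangle = b^*\langle x,\rho(d)y\rangle$ by the module property of the inner product. Thus $\rho(b)x - x\cdot b$ is orthogonal to every vector of the form $\rho(d)y$.

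Next I would upgrade this orthogonality to cover all of $X$. The cyclicity hypothesis says that vectors of the form $(\rho(a)v(g)y)\cdot a'$ span a dense subspace of $X$, so it suffices to check $\langle \rho(b)x - x\cdot b,\ (\rho(a)v(g)y)\cdot a'\rangle = 0$ for all $a,a' \in A$ and $g \in G$. Since $\langle \cdot\,, z\cdot a'\rangle = \langle \cdot\,,z\rangle a'$, it is enough to show $\langle \rho(b)x - x\cdot b,\ \rho(a)v(g)y\rangle = 0$. Here I would use the full relation \eqref{center} with general $g$: taking $c = 1$ in \eqref{center} gives $\langle x, \rho(b^*a)v(g)y\rangle = b^*\langle x,\rho(a)v(g)y\rangle$ for all $a,b \in A$ and $g \in G$, which is exactly
$$\big\langle \rho(b)\,x,\ \rho(a)v(g)\,y\big\rangle = b^*\,\big\langle x, \rho(a)v(g)\,y\big\rangle = \big\langle x\cdot b,\ \rho(a)v(g)\,y\big\rangle.$$
Hence $\rho(b)x - x\cdot b$ is orthogonal to the dense span above, so $\rho(b)x - x\cdot b = 0$ for every $b \in A$, i.e. $x \in Z_X$.

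I do not expect a serious obstacle here: the argument is essentially bookkeeping with the module inner product and the already-established identity \eqref{center}, combined with the cyclicity of $y$ to pass from a dense subspace to all of $X$. The one point requiring a little care is making sure that the relation \eqref{center} is invoked with the correct specialization ($c=1$, $b$ arbitrary, $g$ arbitrary) so that it directly yields $\langle \rho(b)x, \rho(a)v(g)y\rangle = \langle x\cdot b, \rho(a)v(g)y\rangle$ rather than only the $g=e$ case; once that is in hand, density finishes the proof immediately.
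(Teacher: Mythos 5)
Your proof is correct and follows essentially the same route as the paper: both derive the identity $\langle \rho(b)x,\rho(a)v(g)y\rangle = \langle x\cdot b,\rho(a)v(g)y\rangle$ from the commutation relation with coefficients of the trivial equivariant representation (specializing $c=1$), extend it to vectors $(\rho(a)v(g)y)\cdot a'$ via the module property of the inner product, and conclude by cyclicity of $y$ that $\rho(b)x = x\cdot b$ for all $b$.
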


\begin{proof}
Let $T' = T_{\ell,\alpha,b,c}$ be any coefficient of the trivial equivariant representation,
i.e. $T'(g,a) = b^* a \,\alpha_g(c)$ for some $b,c \in A$. 
As $T$ commutes with $T'$, we get  that
$$\big\langle x, \rho\big(b^* a \alpha_g(c)\big) v(g) y \big\rangle = b^* \,\big\langle x, \rho(a)v(g)y \big\rangle\, \alpha_g(c)\,,$$
 hence 
$$\big\langle \rho(b)x, \rho(a)v(g) \rho(c)y \big\rangle = \big\langle x \cdot b, \rho(a)v(g)(y \cdot c) \big\rangle \, $$
for all $g \in G$ and $ a, b,c \in A$.

In particular, substituting $c = 1$, we obtain 
$ \big\langle \rho(b)x, \rho(a)v(g) y \big\rangle = \big\langle x \cdot b, \rho(a)v(g)y \big\rangle $ and thus
$$\big\langle \rho(b)x, (\rho(a)v(g) y) \cdot a' \,\big\rangle = \big\langle x \cdot b, (\rho(a)v(g)y)\cdot a' \,\big\rangle $$
for every $g \in G$ and $ a,a', b \in A$.
Using the cyclicity of $y$, this identity in turn implies that $\rho(b)x = x \cdot b$ for all $b \in A$, i.e., $x \in Z_X$, as claimed.

\end{proof}

\begin{proposition}

$$P(\Sigma) \cap ZB(\Sigma) \subset PZ(\Sigma) \,. $$
\end{proposition}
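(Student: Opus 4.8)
The plan is to combine the Gelfand--Raikov type theorem (Theorem \ref{GR}) with the cyclicity lemma just established (Lemma \ref{xcentral}). So suppose $T \in P(\Sigma) \cap ZB(\Sigma)$. Since $T$ is positive definite with respect to $\Sigma$, Theorem \ref{GR} produces an equivariant representation $(\rho,v)$ of $\Sigma$ on a Hilbert $A$-module $X$ and a vector $x \in X$ which is \emph{cyclic} for $(\rho,v)$ such that $T = T_{\rho,v,x,x}$. The point is now to feed this particular cyclic representative into Lemma \ref{xcentral}.

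The only mismatch to resolve is that Lemma \ref{xcentral} assumes $T = T_{\rho,v,x,y}$ with the \emph{second} vector $y$ cyclic, whereas Theorem \ref{GR} gives us $T = T_{\rho,v,x,x}$ with the (common) vector cyclic. But this is exactly the favourable situation: taking $y = x$, the hypothesis of Lemma \ref{xcentral} that ``$y$ is cyclic for $(\rho,v)$'' is satisfied because $x$ is cyclic. Since also $T \in ZB(\Sigma)$ by assumption, Lemma \ref{xcentral} applies verbatim and yields that the first vector, namely $x$, lies in $Z_X$. Therefore $T = T_{\rho,v,x,x}$ with $x \in Z_X$, which is precisely the statement that $T \in PZ(\Sigma)$ by the definition of $PZ(\Sigma)$.

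Thus the proof is essentially a two-line invocation: write $T = T_{\rho,v,x,x}$ with $x$ cyclic via Theorem \ref{GR}, then apply Lemma \ref{xcentral} with $y = x$ to conclude $x \in Z_X$, hence $T \in PZ(\Sigma)$. I do not anticipate a genuine obstacle here; the one thing to be careful about is simply to quote the cyclic version of Theorem \ref{GR} (which is stated there explicitly) so that the cyclicity hypothesis of Lemma \ref{xcentral} is met. One could write it out as follows.

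\begin{proof}
Let $T \in P(\Sigma) \cap ZB(\Sigma)$. Since $T$ is positive definite (w.r.t.\ $\Sigma$), Theorem \ref{GR} gives an equivariant representation $(\rho,v)$ of $\Sigma$ on a Hilbert $A$-module $X$ and a vector $x \in X$, cyclic for $(\rho,v)$, such that $T = T_{\rho,v,x,x}$. Applying Lemma \ref{xcentral} with $y = x$ (which is cyclic for $(\rho,v)$ by the choice of $x$), and using that $T \in ZB(\Sigma)$, we conclude that $x \in Z_X$. Hence $T = T_{\rho,v,x,x}$ with $x \in Z_X$, so $T \in PZ(\Sigma)$.
\end{proof}
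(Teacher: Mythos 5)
Your proof is correct and follows exactly the paper's own route: the paper also derives this by combining the cyclic version of Theorem \ref{GR} with Lemma \ref{xcentral} applied to the representative $T=T_{\rho,v,x,x}$. You have merely spelled out the details that the paper leaves implicit.
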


\begin{proof}
This follows at once from Theorem 4.16 and Lemma \ref{xcentral}.

\end{proof}

All in all, we have the following pattern of inclusions: 

$$
\begin{array}{ccccccc}
B(G) & \hookrightarrow & BZ(\Sigma) \cap ZB(\Sigma) & \subset & BZ(\Sigma) & \subset & B(\Sigma) \\
\cup &  & \cup & & \cup & & \cup \\
P(G) & \hookrightarrow & \,\,P(\Sigma) \cap ZB(\Sigma) & \subset & PZ(\Sigma) = P(\Sigma) \cap BZ(\Sigma) & \subset & P(\Sigma)
\end{array}
$$

\smallskip \noindent There are some inherent difficulties that one has to be able to handle before it will be possible to obtain a better picture. We illustrate this in the following observation. 
 
\begin{remark} \label{explcenter} 
It is obvious that the inclusion $B(G)\subset B(\Sigma) \cap M_0A(G)$ holds. It seems reasonable that the converse inclusion should hold, but it is in fact not easy to show this. Indeed,  
consider an equivariant representation $(\rho,v)$ of $\Sigma$ on a Hilbert $A$-module $X$,  let
$x,y \in X$ and assume that we have  $T:= T_{\rho,v,x,y}=T^\varphi$, where $\varphi(g)=\langle x, v(g)y \rangle \in {\mathbb C}$ for all $g \in G$. Then one readily checks by direct computation that $T \in ZB(\Sigma)$.
If we assume that $x=y$, then $T\in P(\Sigma)$, so it follows  from Remark \ref{pdclassic} that $\varphi \in P(G)$; in particular $T\in PZ(\Sigma) \subset BZ(\Sigma)$.
 However, when $x\neq y$, it is not clear that an element $T$  given as above has to lie in $BZ(\Sigma)$, or even in $B(G)$. Proposition \ref{trivcent} sheds some light on this problem in the case where $A$ has trivial center. \end{remark}

\begin{proposition}\label{trivcent}
Suppose that $A$ has trivial center. Then $ T \in ZB(\Sigma)$ if and only if $T = T_{\rho,v,x,y} = T^\varphi$, where $\varphi(g):= \langle x, v(g)y \rangle \in {\mathbb C}$ for all $g \in G$. In particular, $$BZ(\Sigma) \subset ZB(\Sigma)\,$$ Moreover, $P(G)$ and $B(G)$ are in bijective correspondences with $PZ(\Sigma)$ and $BZ(\Sigma)$, respectively, via the map $\phi \mapsto T^\phi$. Finally, 
$$P(\Sigma) \cap ZB(\Sigma) = PZ(\Sigma) = P(\Sigma) \cap BZ(\Sigma)\,.$$
\end{proposition}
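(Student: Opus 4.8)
The plan is to reduce the whole statement to the hypothesis $Z(A)=\mathbb{C}\cdot 1$ combined with the structural results already obtained for $ZB(\Sigma)$, $BZ(\Sigma)$ and $PZ(\Sigma)$. I would begin with the biconditional. For the forward implication, assume $T\in ZB(\Sigma)$. Corollary \ref{strcent} then gives that $\varphi(g):=T(g,1)$ lies in $Z(A)=\mathbb{C}\cdot 1$, that $T=L^\varphi$, and that $\varphi(g)=\langle x,v(g)y\rangle$ whenever $T=T_{\rho,v,x,y}$; identifying $\mathbb{C}\cdot 1$ with $\mathbb{C}$ we have $L^\varphi=T^\varphi$, so $T=T_{\rho,v,x,y}=T^\varphi$ with $\varphi$ scalar-valued, as desired. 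For the converse, assume $T=T_{\rho,v,x,y}=T^\varphi$ with $\varphi(g)=\langle x,v(g)y\rangle\in\mathbb{C}$. Then $T\in B(\Sigma)$ (being $T_{\rho,v,x,y}$), and since each $T_g$ is multiplication by the scalar $\varphi(g)$ it commutes with every linear map from $A$ to $A$; hence $(T\times T')(g,a)=\varphi(g)\,T'_g(a)=(T'\times T)(g,a)$ for all $T'\in B(\Sigma)$, $g\in G$, $a\in A$, i.e. $T\in ZB(\Sigma)$.

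The inclusion $BZ(\Sigma)\subset ZB(\Sigma)$ is then immediate: if $T\in BZ(\Sigma)$, choose a decomposition $T=T_{\rho,v,x,y}$ with $x,y\in Z_X$; Proposition \ref{BZprop} yields $\varphi(g)=T(g,1)=\langle x,v(g)y\rangle\in Z(A)=\mathbb{C}$ and $T=L^\varphi=T^\varphi$, so $T\in ZB(\Sigma)$ by the direction just proved. For the bijections, recall that $\phi\mapsto T^\phi$ is injective (by the embedding of $B(G)$ into $B(\Sigma)$) and maps $P(G)$ into $PZ(\Sigma)$ and $B(G)$ into $BZ(\Sigma)$ (the latter from Remark \ref{centvect}, the former as recorded before Proposition \ref{PZspan}). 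To get surjectivity onto $PZ(\Sigma)$, take $T\in PZ(\Sigma)$, write $T=T_{\rho,v,x,x}$ with $x\in Z_X$, use Proposition \ref{BZprop} to write $T=T^\varphi$ with $\varphi(g)=\langle x,v(g)x\rangle\in\mathbb{C}$, and then invoke Remark \ref{pdclassic}: since $T^\varphi=T_{\rho,v,x,x}$ with $x\in Z_X$, that remark gives $\varphi\in P(G)$. Thus $PZ(\Sigma)=\{T^\varphi:\varphi\in P(G)\}$. Surjectivity onto $BZ(\Sigma)$ then follows by linearity of $\phi\mapsto T^\phi$ together with Proposition \ref{PZspan} and $B(G)={\rm Span}\,P(G)$: $BZ(\Sigma)={\rm Span}\,PZ(\Sigma)=\{T^\varphi:\varphi\in{\rm Span}\,P(G)\}=\{T^\varphi:\varphi\in B(G)\}$.

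It remains to establish $P(\Sigma)\cap ZB(\Sigma)=PZ(\Sigma)=P(\Sigma)\cap BZ(\Sigma)$. The second equality is exactly Proposition \ref{PZspan}. For the first, the inclusion $P(\Sigma)\cap ZB(\Sigma)\subseteq PZ(\Sigma)$ has already been established above (via Theorem \ref{GR} and Lemma \ref{xcentral}), and the reverse inclusion follows from $PZ(\Sigma)\subseteq P(\Sigma)$ (each $T_{\rho,v,x,x}$ is $\Sigma$-positive definite by Example \ref{EQRPD}) together with $PZ(\Sigma)\subseteq BZ(\Sigma)\subseteq ZB(\Sigma)$, using the inclusion just proved. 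As for difficulty: no step is genuinely hard — the argument is an assembly of Corollary \ref{strcent}, Proposition \ref{BZprop}, Remark \ref{pdclassic}, Proposition \ref{PZspan} and the earlier inclusion results, which already carry the weight. The point needing the most care is the surjectivity of $\phi\mapsto T^\phi$ onto $PZ(\Sigma)$, i.e. checking that the scalar-valued $\varphi$ attached to $T\in PZ(\Sigma)$ is genuinely positive definite on $G$; here Remark \ref{pdclassic} does the job, but if one prefers a self-contained argument one can instead observe that on $Z_X$ the $A$-valued inner product is $Z(A)=\mathbb{C}$-valued and $v$ restricts to an honest unitary representation of $G$, so that $\varphi(g)=\langle x,v(g)x\rangle$ is a diagonal coefficient of that representation.
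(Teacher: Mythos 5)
Your proposal is correct and follows essentially the same route as the paper: both arguments assemble the biconditional from Proposition \ref{centprop}/Corollary \ref{strcent} and the observation of Remark \ref{explcenter}, deduce $BZ(\Sigma)\subset ZB(\Sigma)$ from Proposition \ref{BZprop}, identify $PZ(\Sigma)$ with $P(G)$ via the fact that a scalar-valued AD-positive definite function is just a positive definite function on $G$ (and then pass to spans), and obtain the final equalities from Proposition \ref{PZspan} together with the preceding inclusion $P(\Sigma)\cap ZB(\Sigma)\subset PZ(\Sigma)$. The only cosmetic difference is that you route the surjectivity onto $PZ(\Sigma)$ through Remark \ref{pdclassic} rather than directly through the AD-positive-definiteness characterization in Proposition \ref{PZspan}, which amounts to the same underlying fact.
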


\begin{proof}
The first statement follows from Proposition \ref{centprop} and Remark \ref{explcenter}. The second statement follows from Proposition \ref{BZprop} in conjugation with the first statement. Since a scalar-valued AD-positive definite function (w.r.t.\ $\Sigma$) is nothing but a positive definite function on $G$, it is clear from Proposition \ref{PZspan} that we may identify  $P(G)$ with $PZ(\Sigma)$ via  the map $\phi \mapsto T^\phi$, and then also $B(G)$ with $BZ(\Sigma)$, by polarization. The last claim follows then from the second statement in combination with Proposition \ref{BZprop}.
\end{proof}

\subsection{On algebras of completely bounded multipliers}

 Following M.\ Walter \cite{Wal1,Wal2}, we recall that
if $B$ is a $C^*$-algebra, then one may consider its {\it dual algebra} $\mathcal{D}(B)$,
that consists of all completely bounded maps from $B$ into itself,  the
product being given by composition. Equipped with the completely bounded norm, 
$\mathcal{D}(B)$ becomes a Banach algebra, with an isometric conjugation 
$\Phi \to \Phi^c$ given by 
$$\Phi^{c}(b) = (\Phi(b^*))^* $$
for each $\, b \in B$\, (cf.\  \cite[Proposition 1]{Wal1}). 

In the sequel, for brevity, we  focus on the reduced situation where $B=C_{\rm r}^*(\Sigma)$. Undoubtedly, many of the statements that follow  admit a full version where $B=C^*(\Sigma)$, mutatis mutandis. (See also Remark \ref{full-cb}).

\medskip 
For $T \in M_0A(\Sigma)$ we set $\| T \|_{\rm cb}  = \|M_T\|_{\rm cb}$.  We first show that $M_0A(\Sigma)$ embeds in a canonical way in $\mathcal{D}(C_{\rm r}^*(\Sigma))$. 

\begin{proposition} $M_0A(\Sigma)$ is a unital subalgebra of $L(\Sigma)$. Moreover, 
 $(M_0A(\Sigma), \|\cdot\|_{\rm cb})$ is a Banach algebra, with an isometric conjugation 
  $T\to T^c$  given by
  $$T^c(g,a)  = \sigma(g,g^{-1})^*\, \alpha_g\Big(T_{g^{-1}}\big(\alpha_g^{-1}\big(a^* \sigma(g,g^{-1})^*\big)\big)\Big)^*\,.$$ 
 The map $T\to M_T$ from $M_0A(\Sigma)$ into $\mathcal{D}(C_{\rm r}^*(\Sigma))$ is a unital injective  Banach algebra homomorphism that respects conjugation.
 \end{proposition}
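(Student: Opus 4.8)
The plan is to verify, in order: (1) that $M_0A(\Sigma)$ is a unital subalgebra of $L(\Sigma)$; (2) that $\|\cdot\|_{\rm cb}$ is a complete algebra norm on it; (3) that the formula for $T^c$ defines an isometric conjugation; and (4) that $T\mapsto M_T$ is a unital injective Banach algebra homomorphism respecting conjugation. The organizing idea is that the map $T\mapsto M_T$ is already essentially forced: if $T$ is a reduced multiplier, then $M_T$ is the \emph{unique} bounded operator on $C_{\rm r}^*(\Sigma)$ with $M_T(\Lambda_\Sigma(f))=\Lambda_\Sigma(T\cdot f)$ for $f\in C_c(\Sigma)$ (uniqueness because $\Lambda_\Sigma(C_c(\Sigma))$ is dense in $C_{\rm r}^*(\Sigma)$). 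So every algebraic identity we want among elements of $M_0A(\Sigma)$ can be checked by verifying the corresponding identity for the operators $M_T\in\mathcal{D}(C_{\rm r}^*(\Sigma))$ on the dense subalgebra $C_c(\Sigma)$, and then invoking uniqueness/density to pull it back.

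First I would record the pointwise formulas on $C_c(\Sigma)$: for $f\in C_c(\Sigma)$ and $g\in G$, $(T\cdot f)(g)=T_g(f(g))$, so $(T\times T')\cdot f = T\cdot(T'\cdot f)$ and $I\cdot f = f$. This immediately gives that $I\in M_0A(\Sigma)$ (with $M_I=\mathrm{id}$, which is completely bounded), that $M_{T\times T'}=M_T\circ M_{T'}$ and $M_{T+T'}=M_T+M_{T'}$ and $M_{\lambda T}=\lambda M_T$ on $C_c(\Sigma)$, hence (by density) on all of $C_{\rm r}^*(\Sigma)$. Since $\mathcal{D}(C_{\rm r}^*(\Sigma))$ is closed under composition, sums and scalars, it follows that $M_0A(\Sigma)$ is a unital subalgebra of $L(\Sigma)$ and that $T\mapsto M_T$ is a unital algebra homomorphism with $\|M_{T\times T'}\|_{\rm cb}\le\|M_T\|_{\rm cb}\|M_{T'}\|_{\rm cb}$, i.e.\ $\|T\times T'\|_{\rm cb}\le\|T\|_{\rm cb}\|T'\|_{\rm cb}$; also $\|I\|_{\rm cb}=\|\mathrm{id}\|_{\rm cb}=1$. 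Injectivity of $T\mapsto M_T$ is clear: $M_T=0$ forces $\Lambda_\Sigma(T\cdot f)=0$ for all $f\in C_c(\Sigma)$, and since $\Lambda_\Sigma$ is faithful on $C_c(\Sigma)$ we get $T\cdot f=0$ for all $f$, whence $T_g=0$ for every $g$ by choosing $f=a\odot g$. For completeness of $(M_0A(\Sigma),\|\cdot\|_{\rm cb})$: the image of $T\mapsto M_T$ is a linear subspace of the Banach space $\mathcal{D}(C_{\rm r}^*(\Sigma))$, and the map is isometric for $\|\cdot\|_{\rm cb}$ by definition, so it suffices to show the image is closed; if $M_{T_n}\to\Psi$ in cb-norm, then $\Psi$ is completely bounded, and $\Psi(\Lambda_\Sigma(f))=\lim_n \Lambda_\Sigma(T_n\cdot f)$; writing $\Psi(\Lambda_\Sigma(f))=\Lambda_\Sigma(f')$ for a suitable $f'\in C_c(\Sigma)$ using that $\widetilde E=E\circ(\,\cdot\,)$-type coordinate maps $x\mapsto E(x\lambda_\Sigma(g)^*)$ are continuous (so $f'(g)=E(\Psi(\Lambda_\Sigma(f))\lambda_\Sigma(g)^*)$ and $f'$ has support inside that of $f$), one checks $f'=T\cdot f$ for the $T\in L(\Sigma)$ defined by $T_g(a)=E(\Psi(a\lambda_\Sigma(g))\lambda_\Sigma(g)^*)$, so $\Psi=M_T$ with $T\in M_0A(\Sigma)$.

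Next I would handle the conjugation. The statement that $\Phi\mapsto\Phi^c$, $\Phi^c(b)=(\Phi(b^*))^*$, is an isometric conjugation on $\mathcal{D}(C_{\rm r}^*(\Sigma))$ is quoted from Walter \cite{Wal1}, so the real content is: if $T\in M_0A(\Sigma)$, then $(M_T)^c = M_{T^c}$ where $T^c$ is given by the displayed formula; in particular $T^c\in M_0A(\Sigma)$ and $\|T^c\|_{\rm cb}=\|(M_T)^c\|_{\rm cb}=\|M_T\|_{\rm cb}=\|T\|_{\rm cb}$. To see $(M_T)^c=M_{T^c}$ it is enough to check the two operators agree on $\Lambda_\Sigma(f)$ for $f\in C_c(\Sigma)$, and by linearity on $f=a\odot g$, i.e.\ on $\Lambda_\Sigma(a\odot g)=a\,\lambda_\Sigma(g)$. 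Here $(M_T)^c(a\lambda_\Sigma(g))=\big(M_T((a\lambda_\Sigma(g))^*)\big)^*$, and $(a\lambda_\Sigma(g))^* = \lambda_\Sigma(g)^*a^*$; using $\lambda_\Sigma(g)^*=\sigma(g,g^{-1})^*\lambda_\Sigma(g^{-1})\cdot(\text{stuff})$ — more precisely the relation $\lambda_\Sigma(g)\lambda_\Sigma(g^{-1})=\rho(\sigma(g,g^{-1}))$-type identity valid in $C_{\rm r}^*(\Sigma)$, namely $\lambda_\Sigma(g)^* = \alpha_g^{-1}(\sigma(g,g^{-1})^*)\,\lambda_\Sigma(g^{-1})$ after rewriting — one brings $(a\lambda_\Sigma(g))^*$ into the form $b\,\lambda_\Sigma(g^{-1})$ with $b=\alpha_g^{-1}(a^*\sigma(g,g^{-1})^*)$, applies $M_T$ to get $T_{g^{-1}}(b)\,\lambda_\Sigma(g^{-1})$, and then takes the adjoint and simplifies using $(\lambda_\Sigma(g^{-1}))^*$ and the cocycle identities to land exactly on $T^c(g,a)\,\lambda_\Sigma(g)=M_{T^c}(a\lambda_\Sigma(g))$. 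This is the one genuinely computational step and the place where Lemma \ref{cocy} (and the basic relations for $\lambda_\Sigma$) are needed; I expect the bookkeeping with $\sigma(g,g^{-1})$ and the $\alpha_g$'s to be the main obstacle, though it is routine in principle. Finally, the identities $M_{T+T'}=M_T+M_{T'}$, $M_{\lambda T}=\bar\lambda\,(\cdot)$? — no: the homomorphism is complex-linear since $\lambda T\mapsto \lambda M_T$ as computed above — together with $M_{T\times T'}=M_T\circ M_{T'}$, $M_I=\mathrm{id}$, injectivity, isometry, and $(M_T)^c=M_{T^c}$ assemble to the assertion that $T\mapsto M_T$ is a unital injective Banach algebra homomorphism into $\mathcal D(C_{\rm r}^*(\Sigma))$ that respects conjugation.
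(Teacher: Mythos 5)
Your proposal is correct and follows essentially the same route as the paper: transport all the algebraic operations and the conjugation through the (isometric, injective) map $T\mapsto M_T$ into $\mathcal{D}(C_{\rm r}^*(\Sigma))$, verify the needed identities on the dense subalgebra $\Lambda_\Sigma(C_c(\Sigma))$ (in particular $(M_T)^c=M_{T^c}$ on elements $a\lambda_\Sigma(g)$, which does work out to the displayed formula via $\lambda_\Sigma(g)^*=\alpha_{g^{-1}}(\sigma(g,g^{-1})^*)\lambda_\Sigma(g^{-1})$ and the identity $\sigma(g,g^{-1})=\alpha_g(\sigma(g^{-1},g))$), and obtain completeness by showing the range is closed. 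Your only deviation is cosmetic: you recover the limiting multiplier via $T_g(a)=E(\Psi(a\lambda_\Sigma(g))\lambda_\Sigma(g)^*)$, whereas the paper sets $T(g,a)=\Psi(a\lambda_\Sigma(g))\lambda_\Sigma(g)^*$ and observes this lies in $A$ because it is a norm limit of the $T_n(g,a)\in A$; the two definitions coincide.
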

 
\begin{proof}  
 If $T, T' \in M_0A(\Sigma)$,  then $M_T\circ M_T' \in \D(C_{\rm r}^*(\Sigma))$ and it is straightforward to check that $$(M_T\circ M_T')\big(\Lambda_\Sigma(f)\big) = \Lambda_\Sigma\big((T \times T')\cdot f \big)$$
for all $f\in C_c(\Sigma)$. Hence, it follows that $T \times T' \in M_0A(\Sigma)$, with $M_{T \times T'} = M_T\circ M_T'$. Proceeding in the same way with the other operations, and using that the map $T\to M_T$ is clearly injective, we see that $(M_0A(\Sigma), \|\cdot\|_{\rm cb})$ becomes a unital normed algebra with isometric conjugation  such that
 $T \mapsto M_T$ is a unital algebra homomorphism  that respects conjugation. 
  
Moreover, the range $R$ of the map $T\to M_T$ is closed in $\mathcal{D}(C^*_{\rm r}(\Sigma))$. Indeed, let $\{T_n\}$ be a sequence in $M_0A(\Sigma)$ such that 
$\|M_{T_n} - \Phi\|_{\rm cb} \to 0$ for some $\Phi \in \D(C^*_{\rm r}(\Sigma))$. Then for  $g\in G$ and $a\in A$, we have
$$\Phi\big(a \lambda_\Sigma(g)\big)\lambda_\Sigma(g)^* 
= \lim_n \, M_{T_n}\big(a \lambda_\Sigma(g)\big)\lambda_\Sigma(g)^* =  \lim_n \, T_n(g,a)\,.$$
 Set $T(g,a) = \Phi\big(a \lambda_\Sigma(g)\big)\lambda_\Sigma(g)^*$ for $g\in G$ and $a\in A$. Since $A$ is closed in $C_{\rm r}^*(\Sigma)$,  $T(g,a)$ lies in $A$. Thus, we get a map $T$ from $G\times A$ into $A$ that is linear in the second variable.  As we have $\Phi\big(a \lambda_\Sigma(g)\big) = T_g(a) \lambda_\Sigma(g)$ for all $g\in G$ and all $a\in A$, we get that $\Phi(\Lambda_\Sigma(f)) = \Lambda_\Sigma(T\cdot f)$ for all $f \in C_c(\Sigma)$. This shows that $T\in M_0A(\Sigma)$ and $\Phi = M_T$, that is, $\Phi \in R$.  
This shows that $R$ is closed in $\mathcal{D}(C^*_{\rm r}(\Sigma))$. Hence, $R$ is complete, and it is then clear  that $(M_0A(\Sigma), \|\cdot\|_{\rm cb})$ is also complete.

\end{proof}

\begin{remark} \label{BM0}
It is clear from Theorem \ref{equiv-coeff} that $B(\Sigma)$ is a subalgebra of $ M_0A(\Sigma) $, such that $ \| T \|_{\rm cb} \leq \|T\|\,$ for all $T\in B(\Sigma)$. However, it is not obvious that $B(\Sigma)$ is necessarily closed w.r.t.\ $\|\cdot\|_{\rm cb}$ in $M_0A(\Sigma)$. Of course, this will be the case if $ \| T \|_{\rm cb} = \|T\|\,$ for all $T\in B(\Sigma)$ or if $B(\Sigma) = M_0A(\Sigma)$. It would be interesting to know when one or both of these conditions are satisfied. For instance, when $A$ and $\sigma$ are trivial, so $B(\Sigma)=B(G)$ and $M_0A(\Sigma)=M_0A(G)$, it is known (as already recalled in Remark \ref{PiBoDCH}\,; see \cite[Corollary 1.8]{DCHa} and \cite[Theorem]{Bo}) that $B(G) = M_0A(G)$ if and only if $G$ is amenable, in which case the two norms agree. 
 
 We also note that $B(\Sigma)$ is closed under the conjugation in $M_0A(\Sigma)$. Indeed, if $T\in B(\Sigma)$ and $(\rho,v)$ is an equivariant representation of $\Sigma$ on $X$ such that
 $T=T_{\rho, v, x, y}$\, for some $x,\,y \in X$, then one checks by direct computation 
 that  $T^c = T_{\rho, v, y, x}$\,, so $T^c \in B(\Sigma)$. 
 
 Moreover,  
  $\|T^c\| \leq \|y\|\|x\|$. Taking the infimum over all possible $x$ and $y$ as above, we get $\|T^c\| \leq \|T\|\,.$
 As conjugation is involutive, the converse inequality also holds. Hence, the conjugation on $B(\Sigma)$ is also isometric  w.r.t. $\|\cdot\|$.
 
 It is clear that $ZB(\Sigma)$ is closed under conjugation, as is the case of the center of any algebra with a conjugation.  Finally, $BZ(\Sigma)$ also shares this property: if $T \in BZ(\Sigma)$, so $T=T_{\rho, v, x, y}$ with $x,y \in Z_X$, then 
 $T^c = T_{\rho, v, y, x} \in BZ(\Sigma).$  
 \end{remark}
 
\begin{remark}\label{M0AG} 
As shown in \cite[Corollary 4.7]{BeCo3}, we have $T^\varphi \in M_0A(\Sigma)$ whenever
$\varphi$ lies in $ M_0A(G)$, in which case
 $\| T^\varphi \|_{\rm cb} \, \leq \| \varphi \|_{\rm cb} $ (where $\| \varphi \|_{\rm cb}$ denotes the norm in $M_0A(G)$).
Hence, the map $\varphi \to T^\varphi$ provides a continuous embedding of $M_0A(G)$ into $M_0A(\Sigma)$. 
An interesting question is whether this map 
is isometric. Another open question is as follows. Assume that $G$ is not amenable, so that there exists $\varphi \in M_0A(G)\setminus B(G)$. Is then 
$T^\varphi \not\in  B(\Sigma)$ ? (See Remark \ref{explcenter} for a related problem).
\end{remark}

Next, we recall that a linear map $M: C^*_{\rm r}(\Sigma) \to C^*_{\rm r}(\Sigma)$ is called an {\it $A$-bimodule map} 
 if $$M(a\,x\,a') = a\,M(x)\,a'$$ for all $a,a' \in A$ and $x \in C^*_{\rm r}(\Sigma)$.
We set
$$M^{\rm bim}_0A(\Sigma) = \{ T \in M_0A(\Sigma) \ | \ M_T \ \mbox{is an $A$-bimodule map}\}\,. $$
As follows from Remark \ref{DongR} and Proposition \ref{PZspan}, we have $PZ(\Sigma) \subset M^{\rm bim}_0A(\Sigma)\,.$ Hence,
 $$BZ(\Sigma) \subset M^{\rm bim}_0A(\Sigma)\,$$
 since $M^{\rm bim}_0A(\Sigma)\,$ is obviously a subspace of $M_0A(\Sigma)$. In fact, we have:

\begin{proposition} 
$\Mb$ is a Banach subalgebra of $M_0A(\Sigma)$ which is closed under conjugation.
\end{proposition}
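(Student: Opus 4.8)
The plan is to establish three things: that $\Mb$ is a subalgebra of $M_0A(\Sigma)$, that it is norm-closed (hence a Banach subalgebra), and that it is closed under the conjugation $T\mapsto T^c$. Since $M_0A(\Sigma)$ is already a Banach algebra with conjugation by the previous proposition, and $\Mb$ is manifestly a linear subspace, the work reduces to checking these closure properties at the level of the maps $M_T$ on $B:=C_{\rm r}^*(\Sigma)$.

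First I would treat the subalgebra property. If $T, T' \in \Mb$, then $M_{T\times T'} = M_T \circ M_{T'}$ by the proposition establishing that $T\mapsto M_T$ is an algebra homomorphism. The composition of two $A$-bimodule maps is again an $A$-bimodule map: for $a, a' \in A$ and $x \in B$, one has $(M_T\circ M_{T'})(a\,x\,a') = M_T(a\,M_{T'}(x)\,a') = a\,(M_T\circ M_{T'})(x)\,a'$. Hence $T\times T' \in \Mb$. Since $I \in \Mb$ (as $M_I = \mathrm{id}_B$ is trivially an $A$-bimodule map), $\Mb$ is a unital subalgebra.

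Next, closedness. Suppose $\{T_n\}$ is a sequence in $\Mb$ with $\|T_n - T\|_{\rm cb}\to 0$ for some $T \in M_0A(\Sigma)$; then $\|M_{T_n} - M_T\|_{\rm cb}\to 0$, so in particular $M_{T_n}(x)\to M_T(x)$ in norm for every $x\in B$. For fixed $a, a'\in A$ and $x \in B$, we have $M_{T_n}(a\,x\,a') = a\,M_{T_n}(x)\,a'$ for all $n$; passing to the limit (using continuity of left and right multiplication by $a$ and $a'$ on $B$), we get $M_T(a\,x\,a') = a\,M_T(x)\,a'$. Thus $M_T$ is an $A$-bimodule map and $T\in\Mb$. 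Since $M_0A(\Sigma)$ is complete and $\Mb$ is a closed subspace, $\Mb$ is itself a Banach space, hence a Banach subalgebra.

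Finally, closure under conjugation. Let $T\in\Mb$; I claim $M_{T^c} = (M_T)^c$, i.e.\ $M_{T^c}(x) = \big(M_T(x^*)\big)^*$ for $x\in B$. This identity holds because $T\mapsto M_T$ respects conjugation (previous proposition), so it suffices to observe that $(M_T)^c$ is an $A$-bimodule map whenever $M_T$ is: for $a, a'\in A$ and $x\in B$,
\begin{align*}
(M_T)^c(a\,x\,a') &= \big(M_T\big((a\,x\,a')^*\big)\big)^* = \big(M_T(a'^{\,*}\,x^*\,a^*)\big)^* \\
&= \big(a'^{\,*}\,M_T(x^*)\,a^*\big)^* = a\,\big(M_T(x^*)\big)^*\,a' = a\,(M_T)^c(x)\,a'\,.
\end{align*}
Hence $M_{T^c} = (M_T)^c$ is an $A$-bimodule map, so $T^c\in\Mb$. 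This completes the proof. The only mildly delicate point is the norm-closedness argument, where one must remember that convergence in $\|\cdot\|_{\rm cb}$ gives pointwise norm-convergence of the associated maps on $B$, which is exactly what is needed to pass the bimodule identity to the limit; everything else is formal manipulation with the conjugation and with composition of maps.
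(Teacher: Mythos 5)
Your proof is correct and follows essentially the same route as the paper's: composition of $A$-bimodule maps for the subalgebra property, the identity $(M_T)^c(a\,x\,a') = a\,(M_T)^c(x)\,a'$ for closure under conjugation, and passing the bimodule identity to the limit for norm-closedness. The only (harmless) additions are the explicit remarks that $I\in\Mb$ and that cb-convergence gives pointwise norm convergence, both of which the paper leaves implicit.
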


\begin{proof}
As the composition of two $A$-bimodule maps is an $A$-bimodule map,
it is clear that $\Mb$ is a subalgebra of $M_0A(\Sigma)$. 
Moreover, if $M$ is an $A$-bimodule map, then 
$$M^c(a x a') = M({a'}^* x^* a^*)^* = ({a'}^* M(x^*) a^*)^* = a M^c(x) a'$$  
for all
$a,a' \in A$ and $ x \in C^*_{\rm r}(\Sigma)$. Hence $M^c$
is an $A$-bimodule map.
It follows that if $T \in \Mb$, then $M_{T^c} = (M_T)^c$ is an $A$-bimodule map, so $T^c \in \Mb$.
Thus $\Mb$ is closed under conjugation. 
Finally, $\Mb$ is closed in $M_0A(\Sigma)$. Indeed, assume $\{T_n\}$ is a 
sequence in $\Mb$, $T \in M_0A(\Sigma)$ and $T_n \to T$ 
(in norm).
Then $$M_T(a x a') =\lim_n M_{T_n} (a x a') = \lim_n {a M_{T_n}(x) a'} = a M_T(x) a'$$ for all $a,a' \in A$ and $ x \in C^*_{\rm r}(\Sigma)$. Hence, $T \in \Mb $. 

\end{proof}

Recall that for $\varphi:G\to A$, we have defined $L^\varphi\,,\, R^\varphi: G \times A \to A$ by
 $$L^\varphi(g,a) = \varphi(g)\, a\,, \quad  R^\varphi(g,a) = a\, \varphi(g)$$ 
 for all $g \in G, a \in A$.
Set 
$$LM_0A(\Sigma) 
= \{L^\varphi \ | \ \varphi:G\to A \text{ and } L^\varphi \in M_0A(\Sigma)\} \,, $$
$$RM_0A(\Sigma) 
= \{R^\varphi \ | \ \varphi:G\to A \text{ and } R^\varphi \in M_0A(\Sigma)\} \,. $$
These subspaces of $M_0A(\Sigma)$ are non-empty since they both contain $BZ(\Sigma)$ (as follows from Proposition \ref{PZspan}).
Moreover, they both contain a copy of $M_0A(G)$: if $\varphi \in M_0A(G)$, regarding $\varphi$ as a function from $G$ to $\Complessi \cdot 1 \subset A$, we have $L^\varphi = R^\varphi$ \, =  $T^
\varphi \in M_0A(\Sigma)$ (cf.\ Remark \ref{M0AG}). 

\begin{proposition}
$LM_0A(\Sigma)$ and $RM_0A(\Sigma)$ are Banach subalgebras of $M_0A(\Sigma)$, satisfying
$$\big(LM_0A(\Sigma)\big)^c = RM_0A(\Sigma), \quad \big(RM_0A(\Sigma)\big)^c = LM_0A(\Sigma)\,.$$
We also have
$$LM_0A(\Sigma) \cap RM_0A(\Sigma)= \big\{ L^\varphi \mid \, \varphi:G\to Z(A) \ \text{and} \ L^\varphi\in M_0A(\Sigma)\big\}\,.$$
\end{proposition}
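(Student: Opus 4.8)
The plan is to establish each of the three assertions in turn, using the conjugation formula on $M_0A(\Sigma)$ together with the explicit description of $L^\varphi$ and $R^\varphi$. First I would show that $LM_0A(\Sigma)$ is a subalgebra: given $\varphi, \psi: G \to A$ with $L^\varphi, L^\psi \in M_0A(\Sigma)$, the product in $L(\Sigma)$ gives $(L^\varphi \times L^\psi)(g,a) = \varphi(g)\,\psi(g)\,a$, so $L^\varphi \times L^\psi = L^{\varphi\psi}$ where $(\varphi\psi)(g) := \varphi(g)\psi(g)$; since $M_0A(\Sigma)$ is itself a subalgebra of $L(\Sigma)$, we get $L^{\varphi\psi} = L^\varphi \times L^\psi \in M_0A(\Sigma)$, hence $L^{\varphi\psi} \in LM_0A(\Sigma)$. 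The identity $I = L^{\mathbf 1}$ (with $\mathbf 1$ the constant function $1$) lies in $LM_0A(\Sigma)$. That $LM_0A(\Sigma)$ is closed — hence a Banach subalgebra — I would deduce from the closedness argument already used for $M^{\rm bim}_0A(\Sigma)$: if $L^{\varphi_n} \to T$ in $M_0A(\Sigma)$, then for each $g$, $\varphi_n(g) = M_{L^{\varphi_n}}(\lambda_\Sigma(g))\lambda_\Sigma(g)^* \to M_T(\lambda_\Sigma(g))\lambda_\Sigma(g)^* =: \varphi(g) \in A$, and then $T(g,a) = \lim_n \varphi_n(g)a = \varphi(g)a$, so $T = L^\varphi \in LM_0A(\Sigma)$. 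The case of $RM_0A(\Sigma)$ is entirely symmetric.

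For the conjugation identities, I would plug $T = L^\varphi$ into the conjugation formula
$$T^c(g,a) = \sigma(g,g^{-1})^*\,\alpha_g\Big(T_{g^{-1}}\big(\alpha_g^{-1}\big(a^*\sigma(g,g^{-1})^*\big)\big)\Big)^*\,.$$
With $T_{g^{-1}}(b) = \varphi(g^{-1})\,b$ this becomes
$$T^c(g,a) = \sigma(g,g^{-1})^*\,\alpha_g\Big(\varphi(g^{-1})\,\alpha_g^{-1}\big(a^*\sigma(g,g^{-1})^*\big)\Big)^* = \sigma(g,g^{-1})^*\,\big(\alpha_g(\varphi(g^{-1}))\,a^*\,\sigma(g,g^{-1})^*\big)^*\,,$$
and expanding the adjoint gives $T^c(g,a) = \sigma(g,g^{-1})^*\,\sigma(g,g^{-1})\,a\,\alpha_g(\varphi(g^{-1}))^* = a\,\psi(g)$ where $\psi(g) := \alpha_g(\varphi(g^{-1}))^*$. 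Thus $(L^\varphi)^c = R^\psi$, and since conjugation maps $M_0A(\Sigma)$ isometrically onto itself, $R^\psi \in M_0A(\Sigma)$; this shows $\big(LM_0A(\Sigma)\big)^c \subseteq RM_0A(\Sigma)$, and applying $c$ again (it is involutive) gives equality. The reverse identity follows symmetrically, or simply by applying $c$ to both sides.

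Finally, for the intersection: the inclusion $\supseteq$ is immediate, since if $\varphi: G \to Z(A)$ then $L^\varphi(g,a) = \varphi(g)a = a\varphi(g) = R^\varphi(g,a)$, so $L^\varphi = R^\varphi \in RM_0A(\Sigma)$ as well. For $\subseteq$, suppose $L^\varphi = R^\psi$ for some $\varphi, \psi: G \to A$; then for all $g \in G$ and $a \in A$ we have $\varphi(g)\,a = a\,\psi(g)$. Taking $a = 1$ yields $\varphi(g) = \psi(g)$, and then $\varphi(g)a = a\varphi(g)$ for all $a \in A$, i.e.\ $\varphi(g) \in Z(A)$; hence $L^\varphi$ belongs to the right-hand set. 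I expect the only mildly delicate point to be the bookkeeping in the conjugation computation — keeping track of which cocycle terms cancel — but this is a routine manipulation of the identity $\sigma(g,g^{-1})^*\sigma(g,g^{-1}) = 1$ and the definition of the adjoint, so there is no real obstacle.
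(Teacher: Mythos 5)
Your proof is correct and follows essentially the same route as the paper: the central computation showing $(L^\varphi)^c = R^{\varphi^c}$ with $\varphi^c(g)=\alpha_g(\varphi(g^{-1}))^*$ is exactly the paper's argument, and your treatment of the intersection is the "easy exercise" the paper leaves to the reader. The only addition is that you spell out the subalgebra and closedness claims (via $L^\varphi\times L^\psi=L^{\varphi\psi}$ and the limit argument recovering $\varphi(g)=M_T(\lambda_\Sigma(g))\lambda_\Sigma(g)^*$), which the paper treats as routine using the same techniques it employs for $M_0A(\Sigma)$ and $M^{\rm bim}_0A(\Sigma)$.
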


\begin{proof}
Assume $L^\varphi \in LM_0A(\Sigma)$.  Then we know that 
$\big(L^\varphi\big)^c \in M_0A(\Sigma)$.
Moreover, for all $g\in G$ and $a\in A$, we have
\begin{align*}
\big(L^\varphi\big)^c(g,a) 
& = \sigma(g,g^{-1})^* \alpha_g\Big(L^\varphi\big(g^{-1},\alpha_g^{-1}(a^* \sigma(g,g^{-1})^*)\big)\Big)^* \\
& = \sigma(g,g^{-1})^* \big(\alpha_g(\varphi(g^{-1}))\,a^*\, \sigma(g,g^{-1})^*\big)^* \\
& = a\, \alpha_g(\varphi(g^{-1}))^* \\
& = R^{\varphi^c}(g,a) \,,
\end{align*}
where 
$\varphi^c(g) := \alpha_g(\varphi(g^{-1}))^*$. So 
$R^{\varphi^c}=\big(L^\varphi\big)^c \in M_0A(\Sigma)$. 
Hence, 
$\big(L^\varphi\big)^c \in RM_0A(\Sigma)$. 
This shows that 
$ \big(LM_0A(\Sigma)\big)^c \subset RM_0A(\Sigma)$. 
The inclusion 
$\big(RM_0A(\Sigma)\big)^c \subset LM_0A(\Sigma)$
 may be shown in a similar way. By conjugation, we obtain that the opposite inclusions both hold. The final assertion is an easy exercise.
\end{proof}

\begin{remark} Let $L=L^\varphi \in LM_0A(\Sigma)$ and $R=R^\psi \in RM_0A(\Sigma)$. Then we have
$$(L \times R)(g,a) = \varphi(g) \,a \,\psi(g) = (R \times L)(g,a)  $$
for all $g \in G$ and $a \in A$, so $LM_0A(\Sigma)$ and $RM_0A(\Sigma)$ commute with each other.

Hence the subalgebra of $M_0A(\Sigma)$ generated by $LM_0A(\Sigma)$ and $RM_0A(\Sigma)$ is the span
of $LM_0A(\Sigma) \times \big(LM_0A(\Sigma)\big)^c$, 
and is closed under conjugation.
\end{remark}

\begin{remark}\label{DR1} Let $\varphi:G\to A$. Then it is easy to see that $L^\varphi \in  LM_0A(\Sigma) \cap \Mb$ if and only if $\varphi$ takes its values in $Z(A)$ and $L^\varphi \in M_0A(\Sigma)$. (When $\sigma=1$, this follows as in \cite[p. 436]{DoRu}; the argument is the same when $\sigma$ is nontrivial).  Hence, all in all, we get
\begin{align*}
BZ(\Sigma) &\subset \, LM_0A(\Sigma) \cap \Mb \\
&= \{L^\varphi \ | \, \varphi:G\to Z(A)
 \text{ and }  L^\varphi \in M_0A(\Sigma)\}\\
 &= LM_0A(\Sigma) \cap RM_0A(\Sigma)\\
 & =RM_0A(\Sigma) \cap \Mb\,.
\end{align*}

\vspace{-3ex} \noindent Setting 
$$LM_{\rm cp}A(\Sigma) 
= \{L^\varphi \ | \ \varphi:G\to A,\, L^\varphi \in M_0A(\Sigma) \text{\, and $M_{L^\varphi}$ is completely positive}\} \,, $$
we can also add to Proposition \ref{PZspan} that 
$$PZ(\Sigma) \, = \, LM_{\rm cp}A(\Sigma) \cap \Mb \,.$$ 
\end{remark}

 \begin{remark} \label{full-cb} What we have done so far in this subsection concerns subalgebras of the Banach algebra $(M_0A(\Sigma), \|\cdot\|_{\rm cb})$. As indicated earlier, one can also introduce analogous subalgebras  of the Banach algebra $M_{\rm cb}^{\rm u}(\Sigma) $ (with respect to the norm  $\|T\|^{\rm u}_{\rm cb} = \|\Phi_T\|_{\rm cb} $) that will satisfy similar properties. The Fourier-Stieltjes algebra $B(\Sigma)$ is of course a subalgebra of $M_{\rm cb}^{\rm u}(\Sigma) $ such that $\|T\| \leq \|T\|^{\rm u}_{\rm cb}$ for all $T\in B(\Sigma)$ (cf.\ Theorem \ref{equiv-coeff-full}). When $A$ and $\sigma$ are trivial, we have $B(\Sigma)=B(G) = M_{\rm cb}^{\rm u}(\Sigma) $ and the two norms agree (see\,\cite{Wal1, Pis2}). The general case is more elusive, cf.\ Remark \ref{PiBoDCH}. 
\end{remark}

\medskip Finally we mention that the algebra $M_0A(\Sigma)$ has a right-handed version: it is the Banach algebra $M_0A'(\Sigma)$ consisting of the maps $S\in L(\Sigma)$ having the property that there exists a
(necessarily unique) map $M'_S \in \D(C^*_{\rm r}(\Sigma))$ satisfying 
$$M'_S(\lambda_\Sigma(g)\,a) = \lambda_\Sigma(g)\, S_g(a)$$ for all $a \in A$ and  $g \in G$,  the norm of such a map $S$ being then defined by $\|S\| = \|M'_S\|_{\rm cb}$. Such a framework is for instance used by Dong and Ruan in \cite{DoRu} in the the special case where $\sigma=1$ and $S= L^\varphi$ for some $\varphi:G \to Z(A)$.
The resulting theory is parallel to the one we have outlined. For completeness, we describe below how the involved algebras are related and leave the proof of the following two propositions to the reader. 

\medskip 
We let $P'(\Sigma)$ consist of the maps $S\in L(\Sigma)$ that are such that for any $n\in \Naturali$, $g_1, \ldots, g_n \in G$ and $a_1, \ldots, a_n \in A$, the matrix
  $$\Big[ \,\sigma(g_i, g_i^{-1}g_j)\, \alpha_{g_j}\Big(S_{g_i^{-1}g_j}\big(\alpha_{g_j}^{-1}\big(\,\sigma(g_i, g_i^{-1}g_j)^*\,a_i^*\,a_j\big)\,\big)\Big)\, \Big]$$
is positive in $M_n(A)$. Moreover, we set $B'(\Sigma) ={\rm span} \, P'(\Sigma)$ and
$$\Mbp = \{ S \in M_0A'(\Sigma) \ | \ M'_S \ \mbox{is an $A$-bimodule map}\}\,. $$

\begin{proposition} Let  $T \in L(\Sigma)$ and define $\widetilde{T}\in L(\Sigma)$ by 
$$\widetilde{T}(g,a) = \alpha_g^{-1}(T(g,\alpha_g(a)))\,$$
for all $g\in G$ and $a\in A$.
The following statements hold:
\begin{itemize}
\item[$i)$] If $T \in M_0A(\Sigma)$, then $\widetilde{T} \in M_0A'(\Sigma)$ and $M'_{\widetilde{T}} = M_T$\,.
\item[$ii)$] If $T \in P(\Sigma)$, then $\widetilde{T} \in P'(\Sigma)$.
\item[$iii)$] The map $\tau: T \mapsto \widetilde{T}$  gives an isometric algebra isomorphism from $M_0A(\Sigma)$ onto $M_0A'(\Sigma)$, that maps $P(\Sigma)$ onto $P'(\Sigma)$, 
$B(\Sigma)$ onto $B'(\Sigma)$ and 
$\Mb$ onto $\Mbp$.
\item[$iv)$] $\Mbp$ is a Banach subalgebra of  $M_0A'(\Sigma)$ which is closed under conjugation.
\end{itemize}
\end{proposition}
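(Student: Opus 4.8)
The plan is to treat the four items in the stated order, with $iii)$ doing most of the work: $i)$ and $ii)$ feed into it, and $iv)$ is then a formal consequence. For $i)$, I would verify that the operator $M_T$ already attached to $T \in M_0A(\Sigma)$ also serves as $M'_{\widetilde T}$. The only identity required is $\lambda_\Sigma(g)\,a = \alpha_g(a)\,\lambda_\Sigma(g)$ in $C_{\rm r}^*(\Sigma)$ (with $A$ identified with $\ell_\Sigma(A)$), which is a rephrasing of the covariance relation for $(\ell_\Sigma, \lambda_\Sigma)$. Using it twice, $M_T(\lambda_\Sigma(g)\,a) = M_T(\alpha_g(a)\,\lambda_\Sigma(g)) = T_g(\alpha_g(a))\,\lambda_\Sigma(g) = \lambda_\Sigma(g)\,\alpha_g^{-1}\big(T_g(\alpha_g(a))\big) = \lambda_\Sigma(g)\,\widetilde T_g(a)$; hence $M'_{\widetilde T}$ exists, equals $M_T$, and is completely bounded, so $\widetilde T \in M_0A'(\Sigma)$ with $\|\widetilde T\| = \|T\|_{\rm cb}$.

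For $ii)$, I would compute the matrix appearing in the definition of $P'(\Sigma)$ for $\widetilde T$ and show it is, entry by entry, the matrix appearing in the definition of $P(\Sigma)$ for $T$, so that positivity of the former is exactly the hypothesis on $T$. Writing $h = g_i^{-1}g_j$, the passage between the two rests on the cocycle relation $\alpha_{g_i}\alpha_h = {\rm Ad}(\sigma(g_i,h))\,\alpha_{g_j}$, which lets one rewrite $\alpha_h\,\alpha_{g_j}^{-1}$ and $\alpha_{g_j}\,\alpha_h^{-1}$ as ${\rm Ad}$-twists of $\alpha_{g_i}^{\pm 1}$; after inserting these into $\sigma(g_i,h)\,\alpha_{g_j}\big(\widetilde T_h(\alpha_{g_j}^{-1}(\sigma(g_i,h)^*a_i^*a_j))\big)$ and using $\widetilde T_h(b) = \alpha_h^{-1}(T_h(\alpha_h(b)))$, all the $\sigma$-conjugations collapse and one lands on $\alpha_{g_i}\big(T_h(\alpha_{g_i}^{-1}(a_i^*a_j\,\sigma(g_i,h)^*))\big)\,\sigma(g_i,h)$. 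This manipulation of the cocycle identities is the only genuinely computational point in the proposition, and I expect it to be the main obstacle — mechanical, but with several $\sigma$'s on either side it is easy to misplace one.

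For $iii)$, I would first record the soft structure of $\tau$: it is linear, fixes $I$, and is a bijection of $L(\Sigma)$ with inverse $S \mapsto \big((g,a)\mapsto \alpha_g(S(g,\alpha_g^{-1}(a)))\big)$; it is also multiplicative, because $\widetilde{T\times T'}$ and $\widetilde T \times \widetilde T'$ both equal $(g,a)\mapsto \alpha_g^{-1}\big(T_g(T'_g(\alpha_g(a)))\big)$ by a short direct computation. Combined with $i)$, $\tau$ maps $M_0A(\Sigma)$ isometrically into $M_0A'(\Sigma)$; for surjectivity I would run the mirror argument — given $S \in M_0A'(\Sigma)$, the operator $M'_S$ serves as $M_{\tau^{-1}(S)}$, using $a\,\lambda_\Sigma(g) = \lambda_\Sigma(g)\,\alpha_g^{-1}(a)$ — so $\tau^{-1}(S) \in M_0A(\Sigma)$. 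By $ii)$ applied to $\tau$ and to $\tau^{-1}$ (the matrix computation being symmetric), $\tau$ restricts to a bijection $P(\Sigma)\to P'(\Sigma)$; since $B(\Sigma) = {\rm Span}\,P(\Sigma)$ by Corollary \ref{span-pd} while $B'(\Sigma) = {\rm span}\,P'(\Sigma)$ by definition, linearity gives $\tau(B(\Sigma)) = B'(\Sigma)$; and because $M'_{\widetilde T} = M_T$, the $A$-bimodule condition transports verbatim, giving $\tau(\Mb) = \Mbp$.

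For $iv)$, I would add one last compatibility, namely that $\tau$ intertwines the conjugations: $\widetilde{T^{\,c}} = (\widetilde T)^{\,c}$, which is immediate from $M'_{\widetilde{T^c}} = M_{T^c} = (M_T)^c = (M'_{\widetilde T})^c$ since both conjugations are the ones inherited from $\D(C_{\rm r}^*(\Sigma))$. Then $\tau$ is an isometric isomorphism of Banach algebras with conjugation carrying $\Mb$ onto $\Mbp$, and since $\Mb$ has already been shown to be a Banach subalgebra of $M_0A(\Sigma)$ closed under conjugation, the same properties pass to its image $\Mbp$. (Alternatively, $iv)$ can be obtained directly by repeating the earlier argument for $\Mb$ verbatim, with $M_T$ replaced by $M'_S$ throughout.)
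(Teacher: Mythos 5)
Your proposal is correct, and the key computations check out: the identity $M'_{\widetilde T}=M_T$ in $i)$ follows exactly as you say from $\lambda_\Sigma(g)\,a=\alpha_g(a)\,\lambda_\Sigma(g)$, and in $ii)$ the relation $\alpha_{g_i}\alpha_{g_i^{-1}g_j}={\rm Ad}(\sigma(g_i,g_i^{-1}g_j))\,\alpha_{g_j}$ does make the $P'(\Sigma)$-matrix of $\widetilde T$ coincide entrywise with the $P(\Sigma)$-matrix of $T$, so positivity transfers in both directions. The paper explicitly leaves the proof of this proposition to the reader, so there is no argument to compare against, but what you have written is precisely the intended ``parallel transport'' via $\tau$ that the surrounding text describes, and it fills in the omitted details correctly.
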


The map $\tau$ can be used to transport the norm $\|\cdot\|$ on $B(\Sigma)$ to a norm on $B'(\Sigma)$, turning $B'(\Sigma)$ into a Banach algebra isometrically isomorphic to $B(\Sigma)$. Alternatively, we could have defined $B'(\Sigma)$ directly as coefficient maps of suitably defined equivariant representations of $\Sigma$ on left Hilbert $A$-modules, but the above approach is shorter. 

\medskip We will denote the inverse of $\tau$ just by $S \mapsto \widehat{S}$. We also set
$$LM_0A'(\Sigma) 
= \{L^\varphi \ | \ \varphi:G\to A \text{ and } L^\varphi \in M_0A'(\Sigma)\} \,, $$
$$RM_0A'(\Sigma) 
= \{R^\varphi \ | \ \varphi:G\to A \text{ and } R^\varphi \in M_0A'(\Sigma)\} \,. $$

\begin{proposition}
 $LM_0A'(\Sigma)$ and $RM_0A'(\Sigma)$
are Banach subalgebras of $M_0A'(\Sigma)$. They satisfy 
$$ \big(LM_0A'(\Sigma)\big)^c = RM_0A'(\Sigma)\,, \quad 
\big(RM_0A'(\Sigma)\big)^c = LM_0A'(\Sigma)\,,$$
$$\widetilde{LM_0A(\Sigma)} = LM_0A'(\Sigma)\,, 
\quad \widehat{LM_0A'(\Sigma)} = LM_0A(\Sigma) \,, $$
$$\widetilde{RM_0A(\Sigma)} = RM_0A'(\Sigma)\,, 
\quad \widehat{RM_0A'(\Sigma)} = RM_0A(\Sigma)\,. $$
\end{proposition}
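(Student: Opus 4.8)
The plan is to derive everything from the properties of the isometric algebra isomorphism $\tau\colon T\mapsto\widetilde T$ of $M_0A(\Sigma)$ onto $M_0A'(\Sigma)$ established in the previous proposition, combined with the corresponding statements for $LM_0A(\Sigma)$ and $RM_0A(\Sigma)$ proved earlier. The one concrete computation needed is: for $\varphi\colon G\to A$ and $g\in G$, $a\in A$,
$$\widetilde{L^\varphi}(g,a)=\alpha_g^{-1}\big(L^\varphi(g,\alpha_g(a))\big)=\alpha_g^{-1}\big(\varphi(g)\,\alpha_g(a)\big)=\alpha_g^{-1}(\varphi(g))\,a=L^{\varphi_0}(g,a),$$
where $\varphi_0(g):=\alpha_g^{-1}(\varphi(g))$, and likewise $\widetilde{R^\varphi}=R^{\varphi_0}$. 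Since each $\alpha_g$ is a bijection of $A$, the assignment $\varphi\mapsto\varphi_0$ is a bijection of the set of all maps $G\to A$ onto itself.

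First I would deduce the four identities involving $\tau$ and $\widehat{\cdot}=\tau^{-1}$. For $\varphi\colon G\to A$ one has, using that $\tau$ maps $M_0A(\Sigma)$ bijectively onto $M_0A'(\Sigma)$, the chain of equivalences $L^\varphi\in LM_0A(\Sigma)\iff L^\varphi\in M_0A(\Sigma)\iff L^{\varphi_0}=\widetilde{L^\varphi}\in M_0A'(\Sigma)\iff L^{\varphi_0}\in LM_0A'(\Sigma)$; since $\varphi\mapsto\varphi_0$ is onto, this shows $\widetilde{LM_0A(\Sigma)}=LM_0A'(\Sigma)$, and the same argument gives $\widetilde{RM_0A(\Sigma)}=RM_0A'(\Sigma)$. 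Applying $\tau^{-1}$ yields $\widehat{LM_0A'(\Sigma)}=LM_0A(\Sigma)$ and $\widehat{RM_0A'(\Sigma)}=RM_0A(\Sigma)$. Since $LM_0A(\Sigma)$ and $RM_0A(\Sigma)$ are Banach subalgebras of $M_0A(\Sigma)$ by the previous proposition and $\tau$ is an isometric algebra isomorphism, it follows immediately that $LM_0A'(\Sigma)$ and $RM_0A'(\Sigma)$ are Banach subalgebras of $M_0A'(\Sigma)$.

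It remains to treat the conjugation. The conjugations on $M_0A(\Sigma)$ and $M_0A'(\Sigma)$ are the restrictions of the conjugation on $\D(C_{\rm r}^*(\Sigma))$ along the embeddings $T\mapsto M_T$ and $S\mapsto M'_S$; since $M'_{\widetilde T}=M_T$ for every $T\in M_0A(\Sigma)$, we get $M'_{(\widetilde T)^c}=(M'_{\widetilde T})^c=(M_T)^c=M_{T^c}=M'_{\widetilde{T^c}}$, so $(\widetilde T)^c=\widetilde{T^c}$ by injectivity of $S\mapsto M'_S$. Thus $\tau$ intertwines the two conjugations, and using $(LM_0A(\Sigma))^c=RM_0A(\Sigma)$ from the previous proposition we obtain
$$\big(LM_0A'(\Sigma)\big)^c=\big(\tau(LM_0A(\Sigma))\big)^c=\tau\big((LM_0A(\Sigma))^c\big)=\tau\big(RM_0A(\Sigma)\big)=RM_0A'(\Sigma),$$
and the identity $(RM_0A'(\Sigma))^c=LM_0A'(\Sigma)$ follows in the same way, or by applying the involution $c$.

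There is no real obstacle here; the argument is pure bookkeeping around $\tau$, $\tau^{-1}$ and the two conjugations once the single computation $\widetilde{L^\varphi}=L^{\varphi_0}$ is in hand. The only point calling for a little care is the assertion that $\tau$ intertwines the conjugations; one may instead avoid it by writing down the explicit conjugation formula on $M_0A'(\Sigma)$ (transported from the one on $M_0A(\Sigma)$ via $\tau$) and checking directly that $(L^\varphi)^c=R^{\varphi^c}$ with $\varphi^c(g)=\alpha_g(\varphi(g^{-1}))^*$, exactly as was done for $LM_0A(\Sigma)$.
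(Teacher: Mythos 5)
The paper leaves the proof of this proposition to the reader, and your argument is a correct and complete execution of the evidently intended route: transporting the earlier results on $LM_0A(\Sigma)$ and $RM_0A(\Sigma)$ through the isometric isomorphism $\tau$, via the computation $\widetilde{L^\varphi}=L^{\varphi_0}$ with $\varphi_0(g)=\alpha_g^{-1}(\varphi(g))$ (and likewise for $R^\varphi$), the bijectivity of $\varphi\mapsto\varphi_0$, and the fact that $\tau$ intertwines the two conjugations because $M'_{\widetilde{T}}=M_T$. The only quibble concerns your closing aside: the conjugation formula transported to $M_0A'(\Sigma)$ reads $(L^\psi)^c=R^{\psi^\sharp}$ with $\psi^\sharp(g)=\alpha_{g^{-1}}(\psi(g^{-1}))^*$ rather than $\alpha_g(\psi(g^{-1}))^*$, but your main argument never relies on that explicit formula.
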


\section{On Fourier-Stieltjes algebras and C$^*$-correspondences}  \label{corresp}

 The aim of this section is to give a description of $B(\Sigma)$  using C$^*$-correspondences over $C_{\rm r}^*(\Sigma)$. We  recall that if $B$ is a C$^*$-algebra, $Y$ is a Hilbert $B$-module and $\phi$ is a homomorphism from $B$ into $\L(Y)$, the triple $(Y,B,\phi)$ is called a {\it a nondegenerate $C^*$-correspondence over $B$} (see e.g.\ \cite[II.7.4.4]{Bl}), or sometimes a right Hilbert $B$-bimodule \cite{EKQR}. The nondegeneracy here is due to our standing assumption that all homomorphisms are unit-preserving. In the sequel, by a $C^*$-correspondence, we always mean a nondegenerate $C^*$-correspondence.

\begin{remark}  \label{eqrep-corresp} 
We first explain how $B(\Sigma)$ may be described in terms of C$^*$-correspondences over $A$. 
Let $(\rho, v)$  be an  equivariant representation of $\Sigma$ on a Hilbert $A$-module $X$. Then $(X,A,\rho)$ is a C$^*$-correspondence over $A$ and, as is usual, we set $a\cdot x = \rho(a)\, x $ for $a \in A$ and $ x \in X$. Property (i) of $(\rho,v)$ may then be rewritten as
\begin{itemize}
\item[(i')] \quad $v(g) (a\cdot x) = \alpha_g(a) \cdot (v(g)x)\,.$
\end{itemize}
Moreover, if $\tilde{\sigma}: G\times G \to \mathcal{I}(X)$ is defined by 
$$[\tilde{\sigma}(g,h)]\, x= \sigma(g,h)\cdot x \cdot \sigma(g,h)^*\,,$$
i.e., $\tilde{\sigma}(g,h)= {\rm ad}_\rho(\sigma(g,h))$, then property (ii) of $(\rho,v)$ says that 
\begin{itemize}
\item[(ii')] \quad $v(g) \, v(h) = \tilde\sigma(g,h) \, v(gh) \,. $
\end{itemize}
When $\sigma$ is trivial, this condition just means that $v$ is a homomorphism from $G$ into $\mathcal{I}(X)$, and $v$ is then a so-called $\alpha$-$\alpha$ compatible action of $G$ on $X$ in the terminology used in \cite{EKQR-0,EKQR}.
In the general case, the map $v$ may be considered as an  $(\alpha, \sigma)$-$(\alpha, \sigma)$ compatible action of $G$ on $X$. We may then say that  $B(\Sigma)$ consists of all the functions  in $L(\Sigma)$ of the form
$$(g,a)\to \big\langle x\,,\, a\cdot v(g)y\big\rangle $$  
where $v$ is some $(\alpha, \sigma)$-$(\alpha, \sigma)$ compatible action of $G$ on some C$^*$-correspondence $X$ over $A$ and $x, y \in X$. We also note  that the Fourier-Stieltjes algebra of a general twisted C$^*$-dynamical system, as considered in \cite{PaRa, PaRa1}, may be defined in a similar way by adapting the notion of compatible action used in \cite{EKQR-0, EKQR} to the twisted case.
\end{remark}

Let now $(\rho, v)$  be an  equivariant representation of $\Sigma$ on a Hilbert $A$-module $X$ and consider $(X,A,\rho)$ as a C$^*$-correspondence over $A$. One may then define the crossed product C$^*$-correspondence $X \rtimes_v G$ over $C^*(\Sigma)$ and its reduced version $X \rtimes_{v,r} G$ over $C_{\rm r}^*(\Sigma)$.
Indeed, as in \cite{EKQR-0,EKQR} when $\sigma$ is trivial, but now applying repeatedly the cocycle identities,
one may show that
the space $C_c(G, X)$ becomes a right pre-Hilbert $C_c(\Sigma)$-bimodule (cf. \cite[Definition 1.22]{EKQR}) 
when equipped with the operations
\begin{align*}
(f\cdot \xi)(h) &= \sum_{g\in G} \,f(g) \cdot \big(v(g) \xi(g^{-1} h)\big) \cdot \sigma(g,g^{-1}h)\\
(\xi\cdot f)(h) &= \sum_{g\in G} \,\xi(g) \cdot \big(\alpha_g\big(f(g^{-1} h)\big) \, \sigma(g,g^{-1}h)\big) \\
\big\langle \xi, \, \eta \rangle (h) &= \sum_{g\in G} \, \alpha^{-1}_g \Big(\big\langle \xi(g), \, \eta(gh)\big\rangle \, \sigma(g,h)^* \Big) \, 
\end{align*} 
for    $f \in C_c(\Sigma), \, \xi, \, \eta \in C_c(G,X)$ and $h\in G$. We skip the tedious computations, as they don't bring any additional information.
As in \cite{EKQR-0}, we may then complete $C_c(G, X)$ with respect to the norm given by $$\|\xi\|=\|\langle \xi,\xi \rangle\|^{1/2}_{\rm u}$$ for $\xi \in C_c(G,X)$, where $\|\cdot \|_{\rm u}$ denotes the norm on $C^*(\Sigma)$, and obtain a C$^*$-correspondence $X\rtimes_v G$ over $C^*(\Sigma)$. 
 Moreover, as in \cite{EKQR}, taking instead the completion with respect to the norm 
$$\|\xi\|'=\|\langle \xi,\xi \rangle\|^{1/2}_{\rm r}$$ on $C_c(G,X)$,
where $\|\cdot \|_{\rm r}$ denotes the norm on $C^*_{\rm r}(\Sigma)$, gives a C$^*$-correspondence  $X\rtimes_{v,r} G$ over $C_{\rm r}^*(\Sigma)$. 

\begin{example}
We consider the trivial equivariant representation $(\ell,\alpha)$ of $\Sigma$ on $X=A$. Then we have 
$X \rtimes_\alpha G = C^*(\Sigma)$ and $X \rtimes_{\alpha,r} G = C^*_{\rm r}(\Sigma)$, considered as C$^*$-correspondences over themselves in the canonical way. 

Indeed, for $f, \xi \in C_c(G,A)$, one gets that $f \cdot \xi = f * \xi$ and $\xi \cdot f = \xi * f$ (twisted convolutions).
Moreover, for $\xi,\eta \in C_c(G,A)$, we have
\begin{align*}
\big\langle \xi, \, \eta \rangle (h) &
= \sum_{g\in G} \, \alpha^{-1}_g \big(\langle \xi(g), \, \eta(gh)\rangle \, \sigma(g,h)^* \big) \\
& = \sum_{g\in G} \, \alpha^{-1}_g \big(\xi(g)^* \, \eta(gh) \big) \, \sigma(g^{-1},g)^*\sigma(g^{-1},gh) \\
& = \sum_{g \in G} \sigma(g^{-1},g)^* \, \alpha_{g^{-1}}\big(\xi(g)^*\eta(gh)\big) \, \sigma(g^{-1},gh) \\
& =  \sum_{g \in G} \sigma(g,g^{-1})^* \, \alpha_g\big(\xi(g^{-1})^*\eta(g^{-1}h)\big) \, \sigma(g,g^{-1}h) \\
& =  \sum_{g \in G} \sigma(g,g^{-1})^* \, \alpha_g\big(\xi(g^{-1})^*\big) \, \alpha_g\big(\eta(g^{-1}h)\big) \, \sigma(g,g^{-1}h) \\
& = \sum_{g \in G} \xi^*(g) \, \alpha_g(\eta(g^{-1}h)) \, \sigma(g,g^{-1}h)\\
& = \big(\xi^* * \eta\big)(h) \ ,  
\end{align*}
for all $h \in G$, hence $\langle \xi,\eta \rangle = \xi^* * \eta$. This means that $C_c(G,X) = C_c(G,A)$ is the canonical right pre-Hilbert bimodule  $C_c(\Sigma)$ over itself. Since
$\|\langle \xi,\xi\rangle\|^{1/2}_{\rm u} = \|\xi^* * \xi\|^{1/2}_{\rm u} = \|\xi\|_{\rm u}$ and
$\|\langle \xi,\xi\rangle\|^{1/2}_{\rm r} = \|\xi^* * \xi\|^{1/2}_{\rm r} = \|\xi\|_{\rm r}$, 
taking the corresponding completions, we thus get $C^*(\Sigma)$ and $C^*_{\rm r}(\Sigma)$ as correspondences over themselves, as asserted.
\end{example}

\medskip Now, 
let $Y$ be a C$^*$-correspondence over $B= C_{\rm r}^*(\Sigma)$. Since $E:B\to A$ is a faithful conditional expectation, we obtain a right Hilbert $A$-module $Y'$ by ''localizing'' via $E$ \cite{La1}. That is, we 
let $A$ act on $Y$ from the right in the obvious way (since $A$ is embedded in $B$) and set $$\langle y, z\rangle_A = E\big(\langle y, z\rangle_B\big) \in A$$
for $y, z \in Y$, where $ \langle y, z\rangle_B$ denotes the $B$-valued inner product on $Y$.
Equipped with $\langle\cdot, \cdot\rangle_A$  and the right action of $A$, $Y$ is a (right) pre-Hilbert $A$-module and  $Y'$ is the Hilbert $A$-module we get after completing  $Y$. 

To lighten our notation, we will write $\lambda(g)$ instead of $\lambda_\Sigma(g)$ in the sequel. For each $a \in A$ and $g \in G$,  we claim that there exist
 $\rho_Y(a) \in \L(Y')$ and $v_Y(g) \in \mathcal{I}(Y')$,  determined by 
$$\rho_Y(a) \, y = a\cdot y$$
$$v_Y(g) \, y = \lambda(g)\cdot y \cdot \lambda(g)^*$$
for all $y\in Y$. Indeed, we first define $\rho_Y(a)$ on $Y$ by the above formula. Then, for each $y, z\in Y$, we have
$$\langle \rho_Y(a) \, y \,,\, z\rangle_A = E\big( \langle a\cdot y\, , \, z\rangle_B\big) = E\big( \langle  y\, , \,a^*\cdot z\rangle_B\big)
= \langle  y \,,\, a^*\cdot z\rangle_A= \langle  y \,,\, \rho_Y(a^*)\, z\rangle_A\,.$$
Moreover, 
$$\|\rho_Y(a)\,y\|_A^2 = \| E(\langle a\cdot y\, , \, a\cdot \, y\rangle_B)\| \leq \|a\|^2 \, \|y\|_A^2$$
since $\langle a\cdot y\, , \, a\cdot \, y\rangle_B \leq \|a\|^2\, \langle y\, , \, y\rangle_B$ (cf.\ \cite[Proposition 1.2]{La1}). 
So $\rho_Y(a)$ is bounded as a linear map from $Y$ into itself, and it extends to a bounded linear map on $Y'$, also denoted by $\rho_Y(a)$. It is then easy to conclude that $\rho_Y(a)$ is adjointable on $Y'$, with $\rho_Y(a)^* = \rho_Y(a^*)$.

The formula for $v_Y(g)$ makes obviously sense on $Y$. Let $y\in Y$. Since
\begin{align*} 
\big\langle v_Y(g)y \, , \, v_Y(g)y\big\rangle_A &= E\big( \langle \lambda(g)\cdot y \cdot \lambda(g)^*\, , \, \lambda(g)\cdot y \cdot \lambda(g)^*\rangle_B\big) \\
&= \alpha_g\Big(E\big( \langle \lambda(g)\cdot y \, , \, \lambda(g)\cdot y \rangle_B\big)\Big) \\
&= \alpha_g\Big( E\big( \langle y \, , \, \lambda(g)^*\lambda(g)\cdot y \rangle_B\big)\Big) \\
&= \alpha_g\Big( E\big( \langle y \, , \, y \rangle_B\big)\Big) \\
&= \alpha_g\big( \langle y \, , \, y \rangle_A\big) \,
\end{align*}

\vspace{-3ex} we get 
$$\| v_Y(g)y \|_A^2 =\|\big\langle v_Y(g)y \, , \, v_Y(g)y\big\rangle_A\| = \|\alpha_g\big( \langle y \, , \, y \rangle_A\big)\| =  \| \langle y \, , \, y \rangle_A\| = \|y\|_A^2\,,$$
hence that $v_Y(g)$ is isometric on $Y$. So $v_Y(g)$ extends to an isometry on $Y'$, that we still denote by $v_Y(g)$.  Moreover, for each $g\in G$, we have
\begin{align*} 
v_Y(g^{-1})v_Y(g)\, y &= (\lambda(g^{-1})\lambda(g)\cdot y \cdot (\lambda(g)\lambda(g))^*\\
&= \sigma(g^{-1}, g) \cdot y \cdot \sigma(g^{-1}, g)^* \\ 
&= {\rm ad}_{\rho_Y} (\sigma(g^{-1},g)) \, y
\end{align*}
for each $y\in Y$. Similarly, we get $v_Y(g)v_Y(g^{-1})\, y = {\rm ad}_{\rho_Y} (\sigma(g,g^{-1})) \, y$. It follows  that  $v_Y(g)$ is invertible on $Y'$, with $(v_Y(g))^{-1} = {\rm ad}_{\rho_Y}(\sigma(g^{-1},g)^*)\, v_Y(g^{-1})
= v_Y(g^{-1}) \, {\rm ad}_{\rho_Y}(\sigma(g,g^{-1})^*)$.

\medskip We then have:

\begin{proposition}\label{rv}
The pair  $(\rho_Y, v_Y)$ is an equivariant representation of $\Sigma$ on $Y'$. 
\end{proposition}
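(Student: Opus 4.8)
The plan is to verify directly that $(\rho_Y, v_Y)$ satisfies the four defining properties (i)--(iv) of an equivariant representation. Much of the groundwork is already in place: the discussion preceding the proposition has established that $\rho_Y$ is a well-defined representation of $A$ on $Y'$, that each $v_Y(g)$ is a well-defined element of $\mathcal{I}(Y')$, and that property (iii) holds, since the computation $\langle v_Y(g)y, v_Y(g)y\rangle_A = \alpha_g(\langle y, y\rangle_A)$ was carried out there; polarization then gives (iii) for general inner products $\langle v_Y(g)y, v_Y(g)z\rangle_A = \alpha_g(\langle y, z\rangle_A)$. So it remains to check (i), (ii) and (iv), and it suffices to verify each of these on elements $y \in Y$ (then extend by continuity, using that $Y$ is dense in $Y'$ and all maps involved are bounded).

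For property (iv), I would compute, for $g \in G$, $a \in A$ and $y \in Y$, that $v_Y(g)(y \cdot a) = \lambda(g)\cdot(y\cdot a)\cdot\lambda(g)^* = \lambda(g)\cdot y\cdot(a\,\lambda(g)^*)$, and then use $a\,\lambda(g)^* = \lambda(g)^*\,\alpha_g(a)$ (which holds in $C_{\rm r}^*(\Sigma)$ because of the covariance relation $\lambda_\Sigma(g)\,a\,\lambda_\Sigma(g)^* = \alpha_g(a)$, equivalently $a\,\lambda_\Sigma(g)^* = \lambda_\Sigma(g)^*\alpha_g(a)$) to rewrite this as $(\lambda(g)\cdot y\cdot\lambda(g)^*)\cdot\alpha_g(a) = (v_Y(g)y)\cdot\alpha_g(a)$. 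For property (i), I would similarly compute $v_Y(g)\,\rho_Y(a)\,y = \lambda(g)\cdot(a\cdot y)\cdot\lambda(g)^* = (\lambda(g)a)\cdot y\cdot\lambda(g)^*$ and use $\lambda(g)a = \alpha_g(a)\lambda(g)$ to get $\alpha_g(a)\cdot(\lambda(g)\cdot y\cdot\lambda(g)^*) = \rho_Y(\alpha_g(a))\,v_Y(g)\,y$; since each $v_Y(g)$ is invertible, this yields $\rho_Y(\alpha_g(a)) = v_Y(g)\,\rho_Y(a)\,v_Y(g)^{-1}$.

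For property (ii), the key input is the twisted cocycle relation $\lambda_\Sigma(g)\,\lambda_\Sigma(h) = \sigma(g,h)\,\lambda_\Sigma(gh)$ in $C_{\rm r}^*(\Sigma)$ (which follows from $(\ell_\Sigma, \lambda_\Sigma)$ being a covariant representation). Then for $y \in Y$,
\begin{align*}
v_Y(g)\,v_Y(h)\,y &= \lambda(g)\cdot\big(\lambda(h)\cdot y\cdot\lambda(h)^*\big)\cdot\lambda(g)^* = \big(\lambda(g)\lambda(h)\big)\cdot y\cdot\big(\lambda(g)\lambda(h)\big)^* \\
&= \big(\sigma(g,h)\lambda(gh)\big)\cdot y\cdot\big(\sigma(g,h)\lambda(gh)\big)^* = \sigma(g,h)\cdot\big(\lambda(gh)\cdot y\cdot\lambda(gh)^*\big)\cdot\sigma(g,h)^* \\
&= \sigma(g,h)\cdot\big(v_Y(gh)\,y\big)\cdot\sigma(g,h)^* = {\rm ad}_{\rho_Y}(\sigma(g,h))\,v_Y(gh)\,y,
\end{align*}
which is exactly (ii). Assembling (i)--(iv), we conclude that $(\rho_Y, v_Y)$ is an equivariant representation of $\Sigma$ on $Y'$.

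I do not anticipate a genuine obstacle here: the whole argument reduces to algebraic manipulations in $C_{\rm r}^*(\Sigma)$ using the covariance relation and the twisted cocycle identity for $\lambda_\Sigma$, combined with the already-verified boundedness/adjointability statements and property (iii). The only point requiring a little care is making sure each identity is first checked on the dense submodule $Y$ and then extended to $Y'$ by continuity, and keeping track of the invertibility of $v_Y(g)$ (established in the preceding paragraph) when rearranging property (i) into the stated conjugation form.
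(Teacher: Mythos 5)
Your proposal is correct and follows essentially the same route as the paper's proof: all three remaining properties are verified on the dense submodule $Y$ by the same algebraic manipulations with the covariance relation $\lambda(g)\,a\,\lambda(g)^* = \alpha_g(a)$ and the identity $\lambda(g)\lambda(h) = \sigma(g,h)\lambda(gh)$, and then extended by continuity. The only (correct) addition on your part is the explicit remark that polarization upgrades the diagonal computation for (iii) to general pairs.
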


\begin{proof} We write $\rho$ instead of $\rho_Y$ and $v$ instead of $v_Y$ in the proof. 
It is enough to check that the conditions (i)-(iv) hold on $Y$. As we have already done this for (iii), we check the others.

\smallskip
Let $g,\, h \in G, \, a \in A,\, y, z \in Y$. We have
\begin{align*}
\rho (\alpha_g(a))\, v(g) \,y & = \alpha_g(a)\cdot \big(\lambda(g) \cdot y \cdot \lambda(g)^*\big) 
=\big(\alpha_g(a)\lambda(g)\big) \cdot y \cdot \lambda(g)^* \\
& = \big(\lambda(g) \,a \big) \cdot y \cdot \lambda(g)^* 
= \lambda(g)  \cdot ( a\cdot y) \cdot \lambda(g)^* \\
& = v(g)\big(a\cdot y\big) = v(g) \rho(a) \, y\,,
\end{align*}

\vspace{-5ex}\begin{align*}
v(g)\, v(h) \, y & =  \lambda(g) \cdot \big(v(h)\,y\big) \cdot \lambda(g)^* = 
\lambda(g) \cdot \lambda(h)\cdot y \cdot \lambda(h)^*\cdot \lambda(g)^* \\
& = \big(\sigma(g,h) \, \lambda(gh) \big) \cdot y \cdot \big(\lambda(gh)^*\,\sigma(g,h)^*\big)
=  \sigma(g,h) \cdot\big( v(gh) \,y\big) \cdot \sigma(g,h)^* \\
& = \big(\rho(\sigma(g,h))\,v(gh) \,y\big) \cdot \sigma(g,h)^*
={\rm ad}_\rho(\sigma(g,h)) \,v(gh)\,y \,, 
\end{align*}

\vspace{-4ex}\begin{align*}
v(g) (y\cdot a) & = \lambda(g)\cdot (y\cdot a)\cdot \lambda(g)^* = \lambda(g)\cdot y\cdot \big(\lambda(g)^*\,\lambda(g) \,a \lambda(g)^*\big) \\
& =  \lambda(g)\cdot y\cdot \lambda(g)^* \cdot \alpha_g(a) = \big(v(g)\, y\big) \cdot \alpha_g(a)\,.
\end{align*}

\vspace{-2ex}\end{proof}

\begin{example} We consider $Y = C^*_{\rm r}(\Sigma)$ as a $C^*$-correspondence over itself in the obvious way.
For $\xi,\eta \in C_c(G,A)$ we have 
$$\langle \xi, \eta \rangle_A = E(\xi^* * \eta) = (\xi^* * \eta)(e) = \sum_{g \in G}\alpha^{-1}_g\big(\xi(g)^* \eta(g)\big) \ . $$
Thus, the inner product 
$\langle \cdot, \cdot \rangle_A$ coincides with  the inner product of the Hilbert A-module $A^\Sigma$ when both are restricted to functions in $C_c(G,A)$.  
As the C$^*$-module norm $\| \cdot \|_A$  associated with $\langle \cdot, \cdot \rangle_A$ is majorized by the $C^*$-algebra norm on $Y=C^*_{\rm r}(\Sigma)$, it follows 
that the completion $Y'$ of $Y$, as a $C^*$-correspondence over $A$, may be identified with $A^\Sigma$.

Thus, using Proposition \ref{rv}, we obtain an equivariant representation $(\rho_Y,v_Y)$ of $\Sigma$ on $Y' = A^\Sigma$. By the very definition of $\rho_Y$, we get $\rho_Y = \ell_\Sigma$. 
Also, on the dense subspace $C_c(G,A)$ of $A^\Sigma$, each $v_Y(g)$ is given by 
$$ [v_Y(g)\, \xi](h) = \alpha_g\big(\xi(g^{-1}hg)\big)\, \sigma(g,g^{-1}hg) \, \sigma(h,g)^*\,. $$ 

Indeed, $v_Y(g)$ is given on $Y = C^*_{\rm r}(\Sigma)$ by
$v_Y(g)\, y = \lambda(g) \, y \, \lambda(g)^*$, and a straightforward computation making use of the cocycle identities
yields the above formula.

Note that if $A = {\mathbb C}$, so $Y = C^*_{\rm r}(G,\sigma)$ for a scalar-valued two-cocycle $\sigma$, we have $Y' = A^\Sigma = \ell^2(G)$
with its usual norm and 
the  $v_Y$   we get in this way is in fact a  unitary representation of $G$ on $\ell^2(G)$ 
(since in this case ${\rm ad}_{\rho_Y}(\sigma(g,h))$ is the identity operator for every $g,h \in G$).
It is a kind of conjugate representation modified by $\sigma$; denoting it by $v_\sigma$, it is given by
$$ [v_\sigma(g)\, \xi](h) = \sigma(g,g^{-1}hg) \, \overline{\sigma(h,g)} \, \xi(g^{-1}hg) $$ 
for $g,h \in G$ and $\xi \in \ell^2(G)$.
In particular, when $G$ is abelian, we just get $[v_\sigma(g)\, \xi](h) = \tilde{\sigma}(g,h)\,\xi(h)$, where $\tilde{\sigma}$ is the bicharacter on $G$ obtained by symmetrizing $\sigma$. 
\end{example}

\medskip Let again $Y$ be a $C^*$-correspondence over $B = C^*_{\rm r}(\Sigma)$ and let $y,z \in Y$. Then define\\ $T: G \times A \to A$ by
$$T(g,a) = E\big(\big\langle y, (a\, \lambda(g)) \cdot z \big\rangle_B \, \lambda(g)^*\,\big) \quad \text{for } g\in G \text{ and } a \in A \,. $$
Since 
$$T(g,a) =\big \langle y, \rho_Y(a) \, v_Y(g) \, z \big\rangle_A \,, $$ where  $(\rho_Y,v_Y)$ is the equivariant representation of $\Sigma$ on $Y'$ constructed in Proposition \ref{rv}, it is clear that $T \in B(\Sigma)$.

\smallskip Conversely, we will show that any $T \in B(\Sigma)$ may be written as above. So assume that $T \in B(\Sigma)$ is given by
$T(g,a) = \big\langle x, \, \rho(a) \, v(g) \, x' \big\rangle$ for some equivariant representation $(\rho,v)$ of $\Sigma$ on a Hilbert $A$-module $X$
and $x,x' \in X$. We can then form the associated crossed product $C^*$-correspondence $X \rtimes_{{\rm r},v} G$ over $B=C^*_{\rm r}(\Sigma)$ constructed previously.
Set $y=x \odot e$  and $y' = x' \odot e$. Then  we have
$$T(g,a) = E\big(\big\langle y , (a \lambda(g)) \cdot y'  \big\rangle \, \lambda(g)^* \big)$$
for all $g \in G$ and $a \in A$. In order to verify this, we first observe that $(a \lambda(g)) \cdot y' \cdot \lambda(g)^*$ corresponds to the function in $C_c(G,X)$ equal to $(a \odot g) \cdot (x' \odot e) \cdot(\sigma(g^{-1},g)^* \odot g^{-1})$. Hence, 
\begin{align*}
E\big(\big\langle y , (a \lambda(g)) \cdot y'  \big\rangle \, \lambda(g)^* \big)
 & = E\big(\big\langle y , (a \lambda(g)) \cdot y'  \cdot \lambda(g)^*  \big\rangle \big) \\
& = \big\langle (x \odot e), (a \odot g) \cdot (x' \odot e) \cdot(\sigma(g^{-1},g)^* \odot g^{-1}) \big\rangle (e) \\
& = \big\langle x, \big[(a \odot g) \cdot (x' \odot e) \cdot(\sigma(g^{-1},g)^* \odot g^{-1})\big](e) \big\rangle \ .
\end{align*}
Now, one easily checks that
$$(a \odot g) \cdot (x' \odot e) = \big( a \cdot(v(g)x')\big) \odot g \  , $$
so that
\begin{align*}
\big[(a \odot g) & \cdot (x' \odot e) \cdot(\sigma(g^{-1},g)^* \odot g^{-1})\big](e) \\
& = \big[ \big(\big( a \cdot(v(g)x')\big) \odot g\big) \cdot(\sigma(g^{-1},g)^* \odot  g^{-1})\big](e) \\
& = a \cdot\Big( v(g) \big( (x' \odot e) \cdot (\sigma(g^{-1},g)^* \odot g^{-1})\big) (g^{-1}) \Big) \cdot \sigma(g,g^{-1}) \\
& = a \cdot \Big( v(g) \big( x' \cdot \sigma(g^{-1},g)^* \big) \Big) \cdot \sigma(g,g^{-1}) \\
& = a \cdot \big( v(g) \, x' \big) \cdot \alpha_g\big(\sigma(g^{-1},g)^*\big) \cdot \sigma(g,g^{-1})  \\
& = a \cdot \big(v(g) \, x'\big) = \rho(a) \, v(g) \, x' \,.
\end{align*}
Therefore we get
$$ E\big(\big\langle y , (a \lambda(g)) \cdot y'  \big\rangle \, \lambda(g)^* \big) =  \big\langle x,\rho(a) \, v(g) \, x' \big\rangle = T(g,a) \,, $$
as desired. In conclusion, we have shown the following result:

\begin{proposition}\label{FS-Cstar}
$B(\Sigma)$ consists of all functions from $G \times A$ into $A$ of the form
$$(g,a) \mapsto E\big(\big\langle y, (a \lambda(g)) \cdot z \big\rangle \, \lambda(g)^*\big)$$
where $y$ and $z$ belong to some $C^*$-correspondence $Y$ over $C^*_{\rm r}(\Sigma)$.
\end{proposition}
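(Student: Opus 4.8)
The plan is to prove the two inclusions separately, leveraging the equivariant representation $(\rho_Y, v_Y)$ on $Y'$ built in Proposition \ref{rv}.

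\medskip
\textbf{One inclusion.} First I would show that every function of the stated form belongs to $B(\Sigma)$. Let $Y$ be a $C^*$-correspondence over $B = C_{\rm r}^*(\Sigma)$ and let $y, z \in Y$. Recall from Proposition \ref{rv} that $(\rho_Y, v_Y)$ is an equivariant representation of $\Sigma$ on the localized Hilbert $A$-module $Y'$, with $\rho_Y(a) w = a\cdot w$ and $v_Y(g) w = \lambda(g)\cdot w\cdot \lambda(g)^*$ for $w\in Y$. Then for $g\in G$ and $a\in A$ one computes directly, using $\lambda(g)^*\lambda(g) = 1$ and the definition of $\langle\cdot,\cdot\rangle_A = E(\langle\cdot,\cdot\rangle_B)$,
$$\big\langle y,\, \rho_Y(a)\, v_Y(g)\, z\big\rangle_A = E\big(\big\langle y,\, (a\,\lambda(g))\cdot z\cdot \lambda(g)^*\big\rangle_B\big) = E\big(\big\langle y,\, (a\,\lambda(g))\cdot z\big\rangle_B\,\lambda(g)^*\big),$$
where the last step uses that $\langle w, w'\cdot b\rangle_B = \langle w, w'\rangle_B\, b$ for $b\in B$. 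Hence the given function equals $T_{\rho_Y, v_Y, y, z}$, which lies in $B(\Sigma)$ by definition. This part is essentially a repackaging of the computation already displayed before the proposition statement, so it is routine.

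\medskip
\textbf{The other inclusion.} Conversely, given $T\in B(\Sigma)$ written as $T(g,a) = \langle x, \rho(a) v(g) x'\rangle$ for an equivariant representation $(\rho, v)$ of $\Sigma$ on a Hilbert $A$-module $X$ and $x, x'\in X$, I would take $Y = X\rtimes_{{\rm r}, v} G$, the reduced crossed product $C^*$-correspondence over $B = C_{\rm r}^*(\Sigma)$ constructed in this section, and set $y = x\odot e$, $z = x'\odot e$ in $C_c(G,X)\subset Y$. The goal is then to verify the identity
$$T(g,a) = E\big(\big\langle y,\, (a\,\lambda(g))\cdot z\big\rangle\,\lambda(g)^*\big)$$
for all $g\in G$, $a\in A$. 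The strategy is to unwind the right-hand side step by step: first identify $(a\,\lambda(g))\cdot z\cdot \lambda(g)^*$ with a concrete element of $C_c(G,X)$ via the bimodule operations recorded earlier (namely $(a\odot g)\cdot(x'\odot e)\cdot(\sigma(g^{-1},g)^*\odot g^{-1})$), then evaluate the $C_c(\Sigma)$-valued inner product at $e$ and use that $\langle x\odot e, \eta\rangle(e) = \langle x, \eta(e)\rangle$. The crucial intermediate computations are $(a\odot g)\cdot(x'\odot e) = (a\cdot v(g)x')\odot g$ and, after applying the formula for the left action again together with property (iv) of $(\rho, v)$ and the cocycle relation $\alpha_g(\sigma(g^{-1},g)^*)\,\sigma(g,g^{-1}) = 1$ (which follows from Lemma \ref{cocy} or the cocycle identities), the simplification of the resulting expression down to $\rho(a)\, v(g)\, x'$.

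\medskip
\textbf{Main obstacle.} The genuine work is entirely in the second inclusion, and specifically in the bookkeeping with the twisted bimodule operations on $C_c(G,X)$: one must apply the formulas for $f\cdot\xi$, $\xi\cdot f$ and $\langle\xi,\eta\rangle$ correctly with the cocycle $\sigma$ inserted in the right places, and then collapse a product of four or five $\sigma$-terms to $1$ using the cocycle identities. There is no conceptual difficulty, but it is error-prone; the key sanity check is that when $\sigma$ is trivial this should reduce to the untwisted computation, and when $A = \Complessi$ it should recover the classical fact that matrix coefficients of the left regular representation of $C^*(\Sigma)$ recover coefficients of unitary representations of $G$. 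I would organize the calculation so that the $\sigma$-cancellation $\alpha_g(\sigma(g^{-1},g)^*)\,\sigma(g,g^{-1}) = 1$ is isolated as the last step, mirroring the display already present in the text. Once the identity is established, the proposition follows by combining both inclusions, and one concludes exactly as stated.
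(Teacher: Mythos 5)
Your proposal is correct and follows essentially the same route as the paper: the forward inclusion is exactly the observation that the displayed function equals $T_{\rho_Y,v_Y,y,z}$ via Proposition \ref{rv}, and the reverse inclusion uses the same choice $Y=X\rtimes_{{\rm r},v}G$, $y=x\odot e$, $z=x'\odot e$, with the same intermediate identities $(a\odot g)\cdot(x'\odot e)=(a\cdot v(g)x')\odot g$ and the cocycle cancellation $\alpha_g(\sigma(g^{-1},g)^*)\,\sigma(g,g^{-1})=1$. No gaps.
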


\begin{example}
 When $A = {\mathbb C}$ and $\sigma = 1$, Proposition \ref{FS-Cstar} gives that the Fourier-Stieltjes algebra $B(G)$ consists of all complex functions on $G$ of the form
$$g \mapsto \tau\big(\big\langle y, \lambda(g) \cdot z \big\rangle \, \lambda(g)^*\big)\,, $$
where $\tau$ denotes the canonical tracial state on $C^*_{\rm r}(G)$ and $y,z$ belong to some $C^*$-correspon-dence $Y$ over $C^*_{\rm r}(G)$. As a consequence, if $\varphi \in B(G)$, then there exists a bounded family $\{x_g\}_{g \in G}$ in $C^*_{\rm r}(G)$ such that $\varphi(g) = \widehat{x_g}(g)$ for all $g \in G$. We don't know whether the converse statement is true.
\end{example}

 \begin{remark} 
 By proceeding in a  similar way, one can show that $B(\Sigma)$ also consists of all functions from $G \times A$ into $A$ of the form
$$(g,a) \mapsto \big(E\circ\Lambda_\Sigma\big)\big(\,\big\langle y, [i_A(a)i_G(g)] \cdot z \big\rangle \, i_G(g)^*\big)$$
where $y$ and $z$ belong to some $C^*$-correspondence $Y$ over $C^*(\Sigma)$. We leave the reader to check this claim.  
\end{remark}

\bigskip \noindent{\bf Acknowledgements.} 
 Most of the present work has been done during several visits
 made by E.B. at the Sapienza University of Rome  and by R.C. at the 
University of Oslo in the period 2013--2015.
 Both authors  thank these institutions for their kind hospitality. They are also very grateful to the referee for his/her helpful comments and suggestions.

\medskip 
{\parindent=0pt Addresses of the authors:\\

\smallskip 
Erik B\'edos, Institute of Mathematics, University of
Oslo, \\
P.B. 1053 Blindern, N-0316 Oslo, Norway.\\ E-mail: bedos@math.uio.no \\

\smallskip 
\noindent
Roberto Conti, 
Universit\`{a} Sapienza di Roma, \\
 Dipartimento di Scienze di Base e Applicate per l'Ingegneria,\\
 via A. Scarpa 16, I-00166 Roma, Italy.
\\ E-mail: roberto.conti@sbai.uniroma1.it\par}

\end{document}